    \tikzstyle{new style 0}=[fill={rgb,255: red,255; green,94; blue,247}, draw=black, shape=circle]
    \tikzstyle{pointy}=[fill=white, draw=black, shape=circle]
    \tikzstyle{pointy}=[->]
\newcommand{\ra}[1]{\renewcommand{\arraystretch}{#1}}
\renewcommand{\phi}{\varphi}
\newcommand{\rr}{\mathbb{R}}
\let\emptyset\varnothing
\newcommand\sbullet[1][.5]{\mathbin{\vcenter{\hbox{\scalebox{#1}{$\bullet$}}}}}
\newcommand{\N}{\mathbb{N}}
\newcommand{\R}{\mathbb{R}}
\newcommand{\Z}{\mathbb{Z}}
\newcommand{\eqdef}{\ensuremath{\stackrel{\mbox{\upshape\tiny def.}}{=}}}
\NewDocumentCommand{\anastasis}{mo}{
    \IfValueF{#2}{
    %%%%% NO Margin Note
                        {{\scriptsize
                            \textcolor{violet}{ 
                            \textbf{A:}
                            \textit{{#1}}
                            }
                        }}
        }
    %%%% Margin Note
    \IfValueT{#2}{
                        \marginnote{{\scriptsize
                            \textcolor{violet}{ 
                            \textbf{A:}
                            \textit{{#1}}
                            }
                        }}
        }
                    }
\definecolor{lightgray}{rgb}{0.83, 0.83, 0.83}
\NewDocumentCommand{\luca}{mo}{
    \IfValueF{#2}{
    %%%%% NO Margin Note
                        {{\scriptsize
                            \textcolor{green}{ 
                            \textbf{L:}
                            \textit{{#1}}
                            }
                        }}
        }
    %%%% Margin Note
    \IfValueT{#2}{
                        \marginnote{{\scriptsize
                            \textcolor{green}{ 
                            \textbf{L:}
                            \textit{{#1}}
                            }
                        }}
        }
                    }
\NewDocumentCommand{\giulia}{mo}{
    \IfValueF{#2}{
    %%%%% NO Margin Note
                        {{\scriptsize
                            \textcolor{red}{ 
                            \textbf{GL:}
                            \textit{{#1}}
                            }
                        }}
        }
    %%%% Margin Note
    \IfValueT{#2}{
                        \marginnote{{\scriptsize
                            \textcolor{red}{ 
                            \textbf{GL:}
                            \textit{{#1}}
                            }
                        }}
        }
}
\definecolor{shamrockgreen}{rgb}{0.0, 0.62, 0.38}
\definecolor{blue-green}{rgb}{0.0, 0.87, 0.87}
\begin{document}

\markboth{Luca Galimerti, Anastasis Kratsios, Giulia Livieri}{Designing Universal Causal Deep Learning Models: Infinite-Dimensional Dynamical Systems}

%%%%%%%%%%%%%%%%%%%%% Publisher's Area please ignore %%%%%%%%%%%%%%%
%
% \catchline{}{}{}{}{}
%
%%%%%%%%%%%%%%%%%%%%%%%%%%%%%%%%%%%%%%%%%%%%%%%%%%%%%%%%%%%%%%%%%%%%
\title{Designing Universal Causal Deep Learning Models: The Case of Infinite-Dimensional Dynamical Systems from Stochastic Analysis}
\titlerunning{Designing Universal Causal Deep Learning Models}
% \title{Designing Universal Causal Deep Learning Models: The Case of Infinite-Dimensional Dynamical Systems from Stochastic Analysis}

% \author{Luca Galimberti\footnote{King's College London, Mathematics Department, UK}}

% \address{Norwegian University of Science and Technology (NTNU),\\
% Department of Mathematics\\
% H{\o}gskoleringen 1, 7034 Trondheim, Norway \\
% luca.galimberti@ntnu.no}

% \author{Anastasis Kratsios\footnote{McMaster University, Mathematics Department, Canada}}

% \address{McMaster University,\\
% Department of Mathematics\\
% 1280 Main St W, Hamilton, ON L8S 4L8\\
% kratsioa@mcmaster.ca}

% \author{Giulia Livieri\footnote{London School of Economics, Statistics Department, UK}}

% \address{London School of Economics (LSE),
% \\
% Department of Statistics\\
% Columbia House, Houghton Street, London, WC2A 2AE\\
% g.livieri@lse.ac.uk}

	% % Finance and Stochastics
	\author{Luca Galimberti\and Anastasis Kratsios \and Giulia Livieri}
	\institute{L. Galimberti \at King's College London\\
    	Department of Mathematics\\
    	 Strand Building, Strand, London, WC2R 2LS \\
    	\email{luca.galimberti@kcl.ac.uk}
    	\and 
    	A. Kratsios 
    	\at McMaster University and The Vector Institute\\
    	Department of Mathematics\\
        1280 Main Street West, Hamilton, Ontario, L8S 4K1, Canada\\
        \email{kratsioa@mcmaster.ca}
        \and 
    	G. Livieri
    	\at London School of Economics (LSE)\\
        Department of Statistics\\
        Columbia House, Houghton Street, London, WC2A 2AE\\
        \email{g.livieri@lse.ac.uk}
    }
    
    % \date{October 22$^{th}$, 2022}

\maketitle

\begin{abstract}
Several non-linear operators in stochastic analysis, such as solution maps to stochastic differential equations, depend on a temporal structure which is not leveraged by contemporary neural operators designed to approximate general maps between Banach space.  
This paper therefore proposes an operator learning solution to this open problem by introducing a deep learning model-design framework that takes suitable infinite-dimensional linear metric spaces, e.g. Banach spaces, as inputs and returns a universal \textit{sequential} deep learning model adapted to these linear geometries specialized for the approximation of operators encoding a temporal structure.  
We call these models \textit{Causal Neural Operators}.  Our main result states that the models produced by our framework can uniformly approximate on compact sets and across arbitrarily finite-time horizons H\"older or smooth trace class operators, which causally map sequences between given linear metric spaces.  Our analysis uncovers new quantitative relationships on the latent state-space dimension of Causal Neural Operators, which even have new implications for (classical) finite-dimensional Recurrent Neural Networks. In addition, our guarantees for recurrent neural networks are tighter than the available results inherited from feedforward neural networks when approximating dynamical systems between finite-dimensional spaces.
\hfill\\
\keywords{Universal Approximation, Causality, Operator Learning, Linear Widths.}
\subclass{MSC 68T07 \and MSC 9108  \and 37A50 \and 65C30 \and 60G35 \and 41A65}
\end{abstract}

\section{Introduction}
\label{s:Introduction}

Infinite-dimensional (non-linear) dynamical systems play a central role in several sciences, especially for disciplines driven by stochastic analytic modeling. However, despite this fact, the causal neural network approximation theory for most relevant dynamical systems in stochastic analysis {\color{black} is lacking}. Indeed, we currently only comprehend neural network approximations of stochastic differential equations (SDEs) with deterministic coefficients (e.g., \cite{G2021FS}) and time-invariant random dynamical systems with the fading memory and echo state property/unique solution property (e.g., \cite{JaegerFMP,Gonon2022NNs}).  A significant problem is causal neural network approximation of \textit{solution operators} to non-Markovian SDEs. \\
\indent Moreover, the understanding of how sequential DL models work is still not fully developed, even in the classical finite-dimensional setting. For instance, the seemingly elementary empirical fact that a sequential DL model's expressiveness increases when one utilizes a high-dimensional latent state space is understood qualitatively for general dynamical systems on Euclidean spaces (as in the reservoir computing literature (e.g., \cite{Lukas_StochInputReservoir})). 

However, the \textit{quantitative} understanding of the relationship between a sequential learning model's state and its expressiveness remains an \textit{open problem}. One notable exception to this fact is the approximation of linear state-space dynamical systems by a stylized class of \textit{Recurrent Neural Networks} (RNNs, henceforth); see \cite{helmut2022metric,LiJMLR2022}.

\paragraph{\textbf{Our contribution.}} Our paper provides a simple quantitative solution to a far reaching generalization of the above problem of constructing neural network approximation of infinite-dimensional (generalized) dynamical systems on ``good'' linear metric spaces. 
More precisely, we construct a neural network approximation of any function $f$ that ``causally'' and ``regularly'' maps sequences $(x_{t_n})_{n=-\infty}^{\infty}$ to sequences $(y_{t_n})_{n=-\infty}^{\infty}$, where each $x_{t_n}$ and every $y_{t_n}$ lives in a suitable linear metric space.  In particular, we construct our causal neural network approximation framework on the following \textit{desiderata}:
\begin{enumerate}[label=(D\arabic*)]
    \item\label{itm:D1} Predictions are causal, i.e., each $y_{t_n}$ is predicted independently of $(x_{t_m})_{m>n}$.
    \item\label{itm:D2} Each $y_{t_n}$ is predicted with a small neural network specialized at time $t_n$.
    \item\label{itm:D3} Only one of these specialized networks is stored in working memory at a time.
\end{enumerate}

We first begin by describing our causal neural network model's design.  Subsequently, we will discuss our approximation theory's implications in computational stochastic analysis. 

\begin{figure}[ht!]%[H]
    \centering
    \includegraphics[width=.75\linewidth]{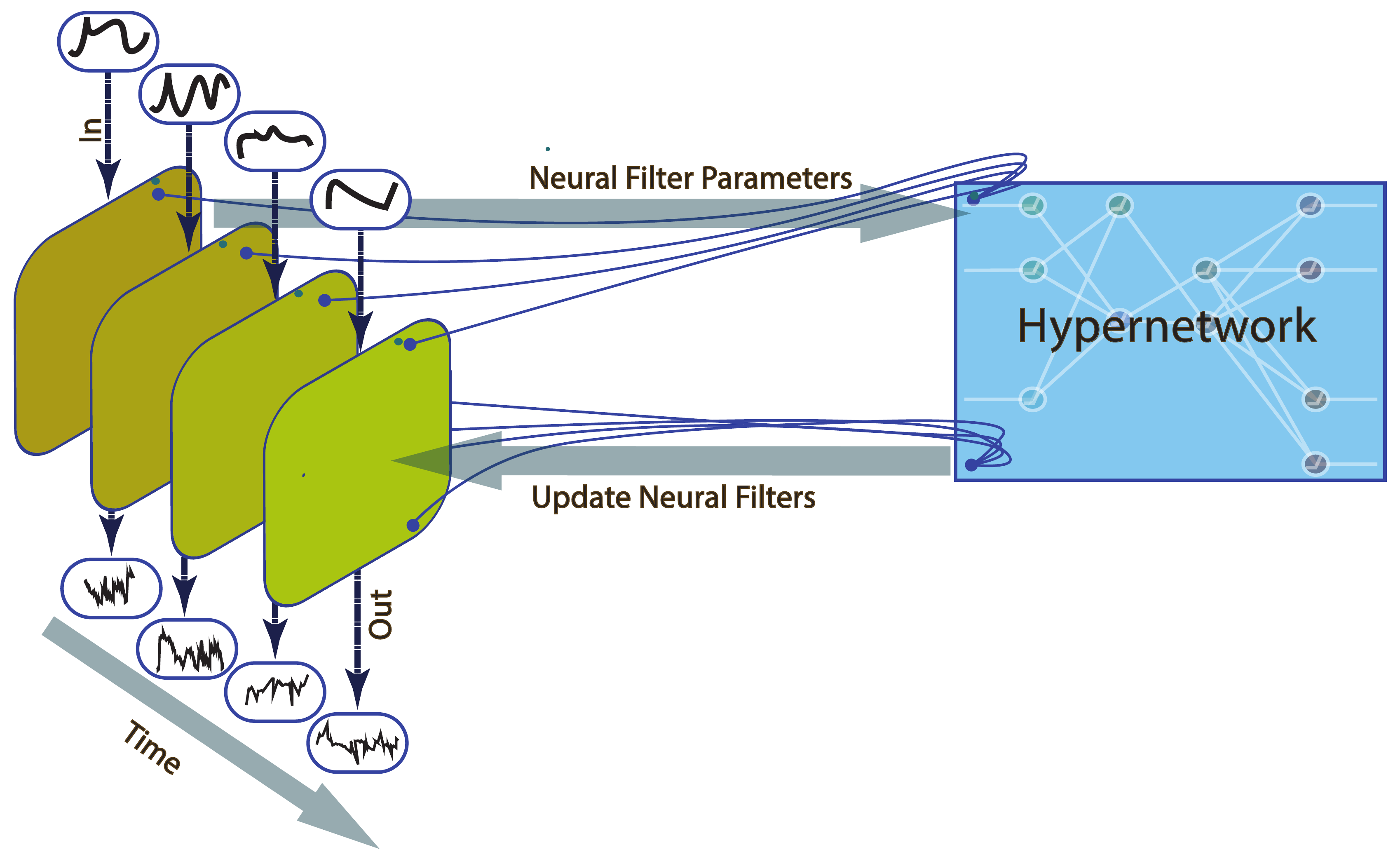}
    \caption{The Causal Neural Operator Model:\hfill\\
    \textbf{Summary:} An universal approximator of {\color{black} regular} causal sequences of operators between well-behaved Fr\'{e}chet spaces.\hfill\\
    \textbf{Overview:} The model successively applies a ``universal'' \textit{neural filter} (see Figure~\ref{fig:model_staticcase}) on consecutive time-windows; the \textit{internal parameters} of this neural filter evolve according to a latent \textit{dynamical system} on the neural filter's parameter space; implemented by a deep ReLU network called a \textit{hypernetwork}.}
    \label{fig:model_dynamiccase}
\end{figure}

\noindent Our neural network model, which we call the \textit{Causal Neural Operator} (CNO, henceforth) is illustrated in Figure~\ref{fig:model_dynamiccase} and works in the following way.  At any given time $t_n$, it predicts an instance of the output time-series at that time $t_n$ using an immediate time-window from the input time-series (e.g., it predicts each $y_{t_n}$ using only $(x_{t_i})_{i=n-10}^{n}$).  At each time $t_n$, this prediction is generated by a non-linear operator defined by a finitely parameterized neural network model, called a \textit{neural filter} (the vertical black arrows in Figure~\ref{fig:model_dynamiccase}). Our neural network model stores only one neural filter's parameters in working memory at the current time by using an auxiliary deep ReLU neural network, called a \textit{hypernetwork} in the machine learning literature (e.g., \cite{HDQICLR2017,VOHSG2020ICLR}), to generate the next neural filter specialized at $t_{n+1}$ using only the parameters of the current ``active'' neural filter specialized at time $t_n$ (the blue box in Figure~\ref{fig:model_dynamiccase}).  Thus, a dynamical system (i.e., the hypernetwork) on the neural filter's parameter space interpolating between each neural filter's parameters encodes our entire model.

\noindent The principal approximation-theoretic advantage of this approach lies in the fact that the hypernetwork is not designed to approximate anything, but rather, it only needs to \textit{memorize/interpolate} a finite number of finite-dimensional (parameter) vectors.  Since memorization (e.g., \cite{VershinynMemorization,KDDWP2022,Ruiyang}) requires only a polynomial number of parameters {\color{black} to achieve zero approximation error on a finite set}, while approximation (e.g., \cite{yarotsky2017error,KP2022JMLR,ZHS2022JMPA,BehnooshAnastasisTMLR}) requires an exponential number {\color{black} of parameters to achieve a possibly non-zero error over a large set containing the finite set of interest}, then, {\color{black} leveraging memorization yields both lighter (fewer parameters) and more accurate deep learning models}; that is, the constructed neural network model is exponentially more efficient.  {\color{black} In particular, using a neural network for memorization allows the trained DL model to generalize beyond the data it is interpolating, a capability that a simple list does not possess. When both the input and output spaces are finite-dimensional, our models effectively reduce to RNNs, which are known for their ability to generalize beyond their training data~\cite{WangGenRNN}. This generalization is attributed to factors such as having a finite VC (Vapnik-Chervonenkis) dimension~\cite{KoiSontRNN,RNNGeneralize} or finite Rademacher complexity~\cite{JRadComplexRNN}.} 
Thus, this neural network design allows us to successfully encode all the parameters required to approximate long stretches of time $\{t_0,\dots,t_N\}$ (for large $N$) with far fewer parameters (i.e., at the cost of $O(\log(N))$ additional layers in the hypernetwork).  Thus, we successfully achieve desiderata \ref{itm:D1}--\ref{itm:D3} provided that each neural filter relies on only a small number of parameters. We show that this is the case whenever $f$ is ``sufficiently smooth''; the rigorous formulation of all these outlined ideas are expressed in Lemma \ref{lemma:deterministic_weaving} and Theorem \ref{thm:theorem_universality_causal}.

\begin{figure}[ht!]%[H]
    \centering
    \includegraphics[width=.65\linewidth]{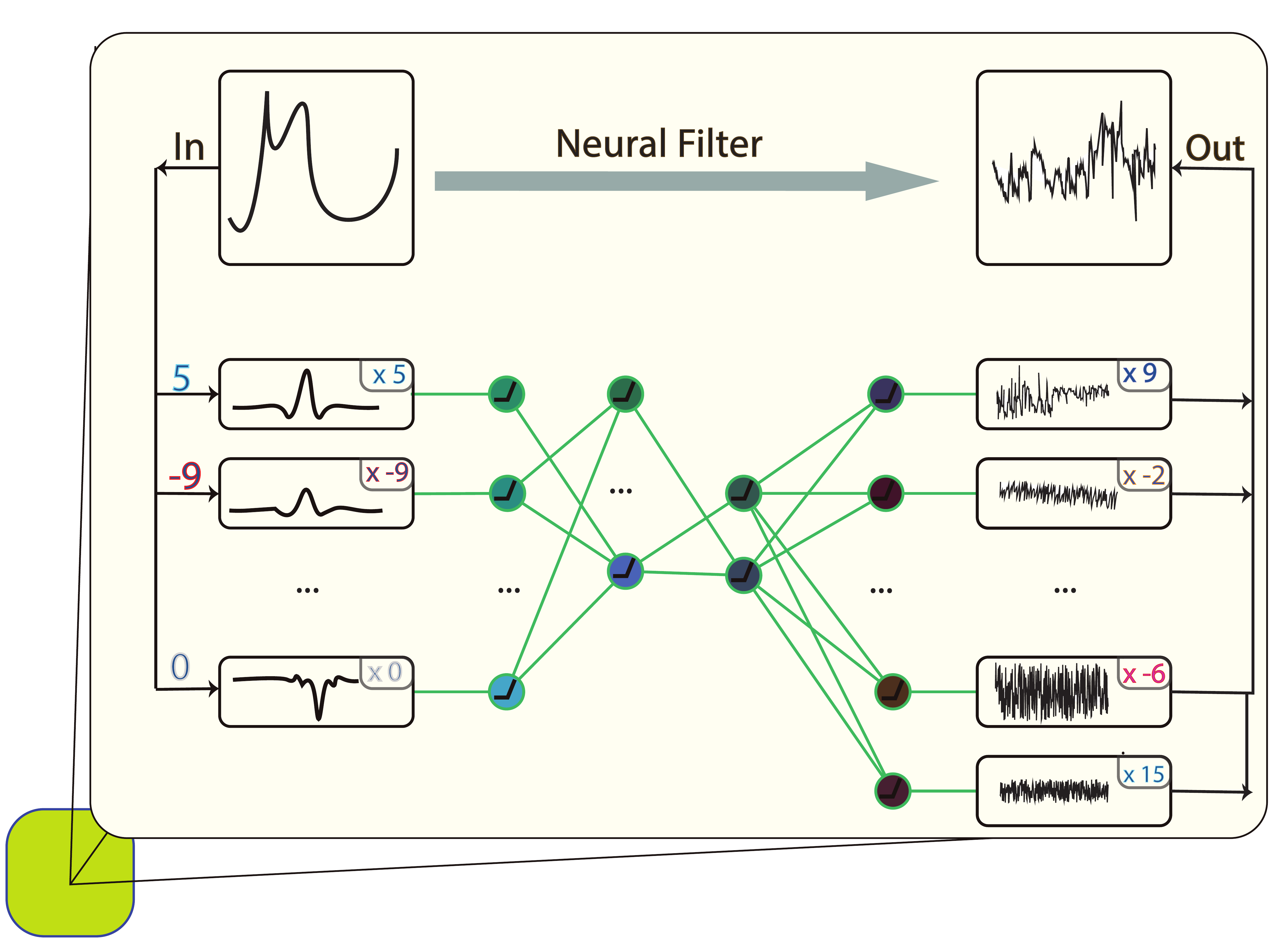}
    \caption{The Neural Filter \hfill\\
    \textbf{Summary:} An universal approximator {\color{black} of regular maps} between any well-behaved Fr\'{e}chet spaces.\hfill\\
    \textbf{Overview:} The neural filter first \textit{encodes} inputs from a (possibly infinite-dimensional) linear space by approximately representing the input as coefficients of a sparse (Schauder) basis.   These basis coefficients are then \textit{transformed} by a deep ReLU network and the network's outputs are \textit{decoded} by the coefficients of a sparse basis representation of an element of the output linear space. Assembling the basis using the outputted coefficients produces the neural filter's output.}
    \label{fig:model_staticcase}
\end{figure}

Though we are focused on the approximation theoretic properties of our modeling framework, we have designed our CNO by {\color{black} accounting for} practical considerations. Namely, we intentionally designed the CNO model so that, like transformer networks \cite{VawaniTransformers2017}, it can be trained non-recursively (via our federated training algorithm, see Algorithm~\ref{alg:train_CNO} below).  This design choice is motivated by the main reasons why the \textit{transformer network} model (e.g., \cite{VawaniTransformers2017}) has replaced residual (e.g., \cite{H2020CVPR}) and RNN (especially Long Short-Term Memory (LSTMs, henceforth) \cite{HochNeurCom1997}) counterparts in practice (e.g., \cite{Hop1982NA,Wi1986NATURE}); namely, not back-propagating through time during training. The reason is that omitting any recurrence relation between a model's prediction in sequential prediction tasks, at-least during the model's construction, has been empirically confirmed to yield more reliable and accurate models trained faster and without vanishing or exploding gradient problems; see, e.g., \cite{HochVanishing,Pa2013ICML}.  Nevertheless, our model does ultimately reap the benefits of recursive models even if we construct it non-recursively, using our parallelizable training procedure.  

The neural filter, illustrated in Figure~\ref{fig:model_staticcase}, is a \textit{neural operator} with quantitative universal approximation guarantees far beyond the Hilbert space setting. It works by first encoding infinite-dimensional problems into finite-dimensions problems.  It then predicts outputs by passing the truncated basis coefficients through a feed-forward neural network with trainable (P)ReLU activation function.  Finally, it reassembles them in the output space by interpreting the network's outputs as the coefficients of a pre-specified Schauder basis or if both spaces are reproducing kernel Hilbert spaces then the first few basis functions can learned from data using principal component analysis\footnote{Or a robust version thereof, e.g.\ \cite{RobustPCAinfinite2017} and then normalizing and orthogonalizing via Gram-Schmidt.}, e.g.\ as with PCA--Net \cite{PCANET2023}.  {\color{black} A similar encoding-MLP-decoding scheme was also used in \cite{C2023AA} for approximately solving nonlinear Kolmogorov equations on Hilbert spaces.
We also note that some infinite-dimensional deep learning models between function spaces on Euclidean domains, such as the DeepONet architecture of~\cite{DeepONet2021Nature}, replace the basis vectors with trainable deep neural networks; however, this technique does not readily apply to general Fr\'{e}chet spaces.}\\
\indent Our ``static'' approximation theorems provides quantitative approximation guarantees for several ``neural operators'' used in practice, especially in the numerical Partial Differential Equations (PDEs), e.g., \cite{Kar2021PR}, and in the inverse-problem literature, e.g., \cite{An2020IOP,Bub2021IOP,Al2021Neurips,Bub2021ISIAMIS,deHoop2020MSL}.  
In the static case, the same argument is valid also for the general qualitative (rate-free) approximation theorems of \cite{Sithcompact1996,LucaUATPaperI,Korolev2022SMA}.  
\\
\indent We now describe more in detail the different areas in which the present paper contributes.

\paragraph{Our contribution in the Approximation Theory of Neural Operators.}
Our results provide the first set of quantitative approximation guarantees for generalized dynamical systems evolving on general infinite-dimensional spaces. By refining the memorizing hypernetwork argument of \cite{AKP2022WP}, together with our general solution to the static universal approximation problem, in the class of H\"{o}lder functions\footnote{By universality here, we mean that every $\alpha$-H\"{o}lder function can be approximated by our ``static model'', for any $0<\alpha \le 1$.  NB, when all spaces are finite-dimensional then this implies the classical notion of universal approximation, formulated in \cite{hornik1989}, since compactly supported smooth functions are $1$-H\"{o}lder (i.e.\ Lipschitz) and these are dense in the space of continuous functions between two Euclidean spaces equipped with the topology of uniform convergence on compact sets.}, we are able to confirm a well-known folklore approximation of dynamical systems literature. Namely, that increasing a sequential neural operator's latent space's dimension by a positive integer $Q$ and our neural network's depth\footnote{We use $\tilde{O}$ to omit terms depending logarithmically on $Q$ and $T$.} by $\tilde{\mathcal{O}}(T^{-Q}\log(T^{-Q}))$ and width by $\tilde{\mathcal{O}}(Q T^{-Q})$ implies that we may approximate $\mathcal{O}(T)$ more time-steps in the future with the same prescribed approximation error. 

To the best of our knowledge, our dynamic result is the only quantitative universal approximation theorem guaranteeing that a recurrent neural network model can approximate any suitably regular infinite-dimensional non-linear dynamical systems.  Likewise, our static result is to the best of our knowledge the only general infinite-dimensional guarantee showing that a neural operator enjoys favourable approximation rates when the target map is smooth enough.

\paragraph{Our contribution in the Approximation Theory of RNNs}
In the finite-dimensional context, CNOs become strict sub-structures of full RNNs, where the internal parameters are updated/generated via an auxiliary hypernetwork.  Noticing this structural inclusion, our results rigorously {\color{black} support} the folklore that RNNs {\color{black} may be more suitable when approximating} causal maps, than \textit{feedforward neural network} (FFNN, henceforth), see Section~\ref{sec:Comparisons}.  This is because our theory yields expression rates for RNN approximations of causal maps between finite-dimensional spaces, which are more efficient than currently available comparable rates for FFNNs.
% \noindent \textcolor{red}{G.L.: Sanity Check: Are the previous contributions consistent with the last modifications?} \luca{What do you mean here?}

\paragraph{Technical contributions:}
Our results apply to sequences of non-linear operators between any ``good linear'' metric spaces.  By ``good linear'' metric space we mean any Fr\'{e}chet space admitting Schauder basis.  This includes many natural examples (e.g., the sequence space $\mathbb{R}^{\mathbb{N}}$ with its usual metric) outside the scope of the Banach, Hilbert\footnote{Note every separable Hilbert space carries an \textit{orthonormal} Schauder basis, so for the reader interested in Hilbert input and output spaces, we note that these conditions are automatically satisfied in that setting.} %
spaces carrying Schauder basis and Euclidean settings; which are completely subsumed by our assumptions.  In other words, we treat the most general tractable \textit{linear} setting where one can hope to obtain \textit{quantitative} universal approximation theorems. 

% \noindent \textcolor{red}{G.L.: Sanity Check: Check at the end, before submission how the paper is organized.}
\paragraph{\textbf{Organization of our paper}}
This research project answers theoretical deep learning questions by combining tools from approximation theory, functional analysis, and stochastic analysis.  Therefore, we provide a concise exposition of each of the relevant tools from these areas in our ``preliminaries'' Section~\ref{sec:Preliminaries}.\\
Section~\ref{sec:main_results} contains our quantitative universal approximation theorems.  In the static case, we derive expression rates for the static component of our model, namely the neural filters, which depend on the regularity of the target operator being approximated; from H\"{o}lder trace-class to smooth trace-class and on the usual quantities\footnote{Such as the compact set's diameter.}. Our main approximation theorem in the dynamic case additionally encodes the target causal map's memory decay rate.  

Section~\ref{sec:applications_StochAnalMathFin} applies our main results to derive approximation guarantees for the solution operators of a broad range of SDEs with stochastic coefficients, possibly having jumps (``stochastic discontinuities'') at times on a pre-specified time-grid and with initial random noise.  Section~\ref{sec:Comparisons}, examines the implication of our approximation rates for RNNs, in the finite-dimensional setting, where we find that RNNs are strictly more efficient than FFNN when approximating causal maps. Section \ref{s:Conclusion} concludes. Finally, Appendix \ref{sec:Background_for_proofs} contains any background material required in the derivations of our main results whose derivations are relegated to Appendix \ref{app:proofs} {\color{black} and Appendix~\ref{A:Additional_Backgound} contains auxiliary background material on Fr\'{e}chet spaces and generalized inverses.}

\subsection{Notation}\label{subsec:Notations}
For the sake of the reader, we collect and define here the notations we will use in the rest of the paper, or we indicate the exact point where the first appearance of a symbol occurs:
\begin{enumerate}
    \item $\mathbb{N}_{+}$\,:\,it is the set of natural numbers strictly greater than zero, i.e. $1, 2, 3, \cdots$. On the other hand, we use $\mathbb{N}$ to denote the positive integers, and $\mathbb{Z}$ to denote the integers.
    \item $[[N]]$\,:\,it denotes the set of natural numbers between $1$ and $N$, $N \in \mathbb{N}_+$, i.e. $[[N]]=\{1,\ldots, N\}$.
    \item Given a topological vector space $(F,\tau)$, $F'$ will denote its topological dual, namely the space of continuous linear forms on $F$.
    \item Given two topological vector spaces $(E,\sigma)$ and $(F,\tau)$, $L(E,F)$ denotes the space of continuous linear operators from $E$ into $F$; if $E=F$, then we will write $L(E)=L(E,E)$. 
    \item Given a Fr\'{e}chet space $F$, we use $\langle \cdot,\cdot\rangle$ to denote the canonical pairing of $F$ with its topological dual $F^{\prime}$,
    \item We denote the open ball of radius $r>0$ about a point $x$ in a metric space $(X,d)$ by $\operatorname{Ball}_{(X,d)}(x,r) \eqdef \{u\in X:\,d(x,u)<r\}$,
    \item We denote the closure of a set $A$ in a metric space $(X,d)$ by $\overline{A}$.  
    \item $\mathcal{P}, p_k$: \ref{subsec:Frechet}
    \item $\Phi$: \eqref{eq:distance_Frechet2}
    \item $\beta_k^F$ with $F$= Fr\'echet space: \eqref{eq: coordinates function}
    \item $d_{F:n}$ with $F$= Fr\'echet space: \eqref{eq:metric_on_F}
    \item $[d], P([d])$: \ref{subsec:feed_forward}
    \item $P_{F:n}, I_{F:n}$ where $F$ is a Fr\'echet space: \eqref{eq:function_P_E_n} and \eqref{eq:function_I_E_n}; furthermore, $A_{F:n}\eqdef I_{F:n}\circ P_{F:n}$
    \item $C_{tr}^{k,\lambda}(K,B)$ and $C_{\alpha, tr}^{\lambda}(K,B)$: \ref{def:Ck_traceclass} and \ref{def:holder_traceclass} 
    \item $\psi_n$ and $\phi_n$:
    \eqref{eq:setteoreticmap_psi}
    \eqref{eq:setteoreticmap_varphi}
    \item The canonical projection onto the $n^{th}$ coordinate of an $x \in \prod_{n \in \mathbb{Z}}\, \mathcal{X}_n$ is denoted by $x_n$; where each $\mathcal{X}_n$ is an arbitrary non-empty set.  \hfill\\
    In particular, if $f:A  \rightarrow \prod_{n \in \mathbb{Z}}\, \mathcal{X}_n$, with $A$ an arbitrary non-empty set, then $f(x)_n$ denotes the projection of $f(x)\in \prod_{n \in \mathbb{Z}}\, \mathcal{X}_n$ onto the $n^{th}$ coordinate,
    \item $\mathcal{NF}^{\text{(P)ReLU}}_{[n]}$: The set of neural filters from $B$ to $E$,
    \item $V$: the ``special function'', defined as the inverse of the map\footnote{The map $u\mapsto u^4\,\log_3(u+2)$ is a continuous and strictly increasing surjection of $[0,\infty)$ onto itself; whence, $V$ is well-defined.} $u\mapsto u^4\,\log_3(u+2)$ on $[0,\infty)$.  
    \item {\color{black}$f^{-}$: Generalized inverse of a real-valued increasing function $f$ on $\mathbb{R}$, see Appendix~\ref{subsec:generalized_inverse}.}
    % \textcolor{red}{G.L.: This can be delete: see Point 13.}\luca{I don't think so. But we can definitely merge it with point 13}
\end{enumerate}

\section{Preliminaries}\label{sec:Preliminaries}
In this section, we remind some preparatory material for the derivations of the main results of this paper. Finally, we remark that the notation in each of the subsequent subsections is self-contained and it is the one used on the cited paper: it will be up to the reader to contextualize it in the next sections.    
\subsection{Fr\'echet spaces}\label{subsec:Frechet}
The main references for this subsection are the following ones: \cite{H1982AMS}, Part I; \cite{C2019} Chapter IV; \cite{S1971}, Chapter III and the working paper of \cite{BONET2020WP}; all the vector spaces we will deal with will be vector spaces over $\mathbb{R}$.  
Before defining a Fr\'echet space, we remind that a \emph{locally convex topological vector space}, say $(F,\tau)$, is a topological vector space whose topology $\tau$ arises from a collection of seminorms $\mathcal{P}$. When clear from the context, we will write $F$ instead of $(F,\tau)$.  The topology is \emph{Hausdorff} if and only if for every $x \in F$ with $x \neq 0$ there exists a $p \in \mathcal{P}$ such that $p(x)>0$. On the other hand, the topology is \emph{metrizable} if and only if it may be induced by a countable collection $\mathcal P = \{p_{k}\}_{k \in \mathbb{N}_+}$ of seminorms, which we may assume to be increasing, namely $p_k(\cdot)\leq p_{k+1}(\cdot), k\in\N_+$. 
\begin{definition}[Fr\'{e}chet space]
A Fr\'{e}chet space $F$ is a complete metrizable locally convex topological vector space. 
\end{definition}
\noindent Evidently, every Banach space $(F,\|\cdot\|_F)$ is a Fr\'echet space; in this case, simply $\mathcal{P}=\{\|\cdot\|_F\}$.  
A canonical choice for the metric $d_{F}$ on a Fr\'echet space $F$ (that generates the pre-existing topology) is given by:
\begin{equation}\label{eq:distance_Frechet}
    d_{F}(x, y)  \eqdef  \sum_{k = 1}^{\infty} 2^{-k}\, \Phi(p_k(x-y)),\quad x,y\in F,
\end{equation}
where
\begin{equation}\label{eq:distance_Frechet2}
\Phi(t) \eqdef \frac{t}{1+t},\,\,\, t \geq 0.
\end{equation}

\noindent We now remind the concept of \textit{directional derivative} of a function between two Fre\'chet spaces. This notion of differentiation is significantly weaker than the concept of the derivative of a function between two Banach spaces. Nevertheless, it is the weakest notion of differentiation for which many of the familiar theorems from calculus hold. In particular, the chain rule is true (cfr.~\cite{H1982AMS}). Let $F$ and $G$ be Fr\'echet spaces, $U$ an open subset of $F$, and $P: U \subseteq F \rightarrow G$ a continuous map. 
\begin{definition}[Directional Derivative]\label{def:directional}
    The derivative of $P$ at the point $x \in U$ in the direction $h \in F$ is defined by:
    \begin{equation}\label{eq:directional_derivative}
        DP(x) h = \lim_{t \rightarrow 0} \frac{P(x + t h) - P(x)}{t}.
    \end{equation}
In particular, $P$ is said to be differentiable at $x$ in the direction $h$ if the previous limit exists. $P$ is said to be $C^1$ on $U$ if the limit in Equation\eqref{eq:directional_derivative} exists for all $x \in U$ and all $h \in F$, and $DP: (U \subseteq F) \times F \rightarrow G$ is continuous (jointly as a function on a subset of the product).  
\end{definition}
\noindent As anticipated, the Definition \ref{def:directional} of a $C^{1}$ map disagrees with the usual definition for a Banach space in the sense that the derivative will be the same map, but the continuity requirement is weaker. The previous definition can be generalized and applied to higher-order derivatives. For instance, if $P: U \subseteq F \rightarrow G$, then:
    \begin{equation}\label{eq:directional_derivative_second}
        D^2 P(x) \{h, k\} = \lim_{t \rightarrow 0} \frac{D P(x + t k)h - D P(x)h}{t}.
    \end{equation}
\noindent Analogously, $P$ is said to be $C^2$ on $U$ if $DP$ is $C^1$, which happens if and only if $D^{2}P$ exists and is continuous. If $P:U\subset F\to G$ we require $D^2P$ to be continuous jointly as a function on the product space
\[
D^2P:(U\subseteq F) \times F \times F \to G.
\]
Similarly, the $k$-th derivative $D^{k}P(x)\{h_1, h_2, \ldots, h_k\}$ will be regarded as a map
\begin{equation}\label{eq:directional_derivative_order_k}
    D^{k} P : ( U \subseteq F ) \times F \times \ldots \times F \rightarrow G.
\end{equation}
$P$ is of class $C^{k}$ on $U$ if $D^{k} P$ exists and is continuous (jointly as a function on the product space). 
\begin{remark}
We will say that $P$ is $C^{k}$-$\textrm{Dir}$ if $P$ satisfies the previous definition. 
\end{remark}
\indent Next, we introduce the concept of \emph{Schauder basis} (\cite{MV1992}).  Let $F$ be a Fr\'echet space. A sequence $(f_k)_{k \in \mathbb{N}_+} \subset F$ is called a \emph{Schauder basis} if every $x \in F$ has a unique representation
\begin{equation}\label{eq:representation_schauder}
    x = \sum_{k = 1}^{\infty} x_k f_k,
\end{equation}
where the series converges in $F$ (in the ordinary sense). It is immediate to see from the definition that the maps 
\begin{equation}\label{eq: coordinates function}
    F\ni x \stackrel{\beta_k^F}{\longmapsto}  x_k,\quad k \in \mathbb{N}_+
\end{equation}
are continuous linear functionals. We remind that if a Fr\'echet space admits a Schauder basis, it is separable. However, the converse does not hold in general; whether every separable Banach space has a basis appeared in 1931 for the first time in the Polish edition of Banach's book (\cite{B1932}) and was solved in the negative by Enflo (\cite{E1973AM}).
{\color{black} Additional background on Fr\'{e}chet spaces is included in Appendix~\ref{s:Additional_Background__Frechet}.}
\\

\subsection{Feedforward Neural Networks with ReLU and PReLU activation functions}\label{subsec:feed_forward}

We give the definition of feed-forward neural networks with ReLU activation function (ReLU FFNNs, henceforth) and with a \emph{trainable} Parametric ReLU activation function (PReLU FFNNs, henceforth). Interestingly, Proposition 1 in \cite{yarotsky2017error} shows that using a ReLU activation function is not much different from using a PReLU activation function, in the sense that it is possible to replace a ReLU FFNN with a PReLU FFNN while only increasing the number of units and weights by constant factors. However, the main advantage of using a PReLU FFNN with respect to a ReLU FFNN is that the former can \emph{synchronize the depth} of several functions realized by ReLU FFNNs, a fact that will be extremely important in the derivation of Theorem \ref{thm:theorem_universality_causal}. 
In particular, a PReLU activation function is any map $\sigma\,:\,\mathbb{R}\times\mathbb{R} \rightarrow \mathbb{R}$, $(\alpha, x) \rightarrow \sigma_{\alpha}(x) \eqdef \max\{x, \alpha x\}$; the parameter $\alpha$ is called slope. Notice that for $\alpha = 0$ one obtains the ReLU activation function. As it is customary in the literature, in what follows we will often be applying the (P)ReLU activation function component-wise. More precisely, for any $\alpha\in \R$ and an $x \in \mathbb{R}^{N}$, $N \in \mathbb{N}_{+}$, we have 
\begin{equation}\label{eq:component_wise}
    \sigma_{\alpha} \sbullet[0.77] x  \eqdef  (\sigma_{\alpha}(x_i))_{i=1}^{N}.
\end{equation}
 
\noindent Fix $J \in \mathbb{N}_{+}$ and a multi-index $[d]  \eqdef  (d_0, \ldots, d_{J})$, and let $P([d])  \eqdef  J + \sum_{j = 0}^{J-1} d_j (d_{j+1} + 1) + d_J$. Weights, biases, and slopes are identified in a unique parameter $\theta \in \mathbb{R}^{P([d])}$ with 
\begin{equation}\label{eq:identification}
    \mathbb{R}^{P([d])}\,\reflectbox{$\in$}\,\theta \iff ((A^{(j)}, b^{(j)},
        {\color{black}{\alpha^{(j)}}}
    )_{j=0}^{J-1}), c), \quad (A^{(j)}, b^{(j)},{\color{black}{\alpha^{(j)}}}) \in \mathbb{R}^{d_{j+1}\times d_j} \times \mathbb{R}^{d_{j}} {\color{black}{\times \R}},\,\,c \in \mathbb{R}^{d_{J}}.
\end{equation}
\noindent With the previous identification, the recursive representation function of a $[d]$-dimensional deep feed-forward network is given by
\begin{equation}\label{eq:feed_forward}
    \begin{split}
        \mathbb{R}^{P([d])} \times \mathbb{R}^{d_0} & \reflectbox{$\in$} (\theta, x) \rightarrow \hat{f}_{\theta}(x) \eqdef  x^{(J)} + c,\\
     x^{(j+1)}& \eqdef A^{(j)}\sigma_{{\color{black}{\alpha^{(j)}}}}\sbullet[0.75](x^{(j)} + b^{(j)})\quad\text{for\,\,}j= 0, \ldots, J-1,\\
    x^{(0)} & \eqdef  {\color{black}{A^{(0)}}}x.
    \end{split}
\end{equation}

\noindent We will refer to $J$ as $\hat{f}_{\theta}$'s \textit{depth}.  We will denote by $\mathcal{N}\mathcal{N}^{\operatorname{(P)ReLU}}_{[d]}$ a deep ReLU FFNN with \textit{complexity} $[d]$.\\

\section{Main Results}
\label{sec:main_results}

\subsection{Static Case: Universal Approximation}\label{sec:main_results__Static}
We begin by treating the ``static case'' wherein we show that CNO's \textit{neural filters}, illustrated in Figure~\ref{fig:Neural_Filter_GeneralForm}, are universal approximators of (non-linear) H\"{o}lder class operators between ``good'' linear spaces.  
We note that the application of the CNO only requires us to customize its neural filters to the relevant input and outputs' geometries.  

\begin{figure}[H]
\centering
\includegraphics[width=1\textwidth]{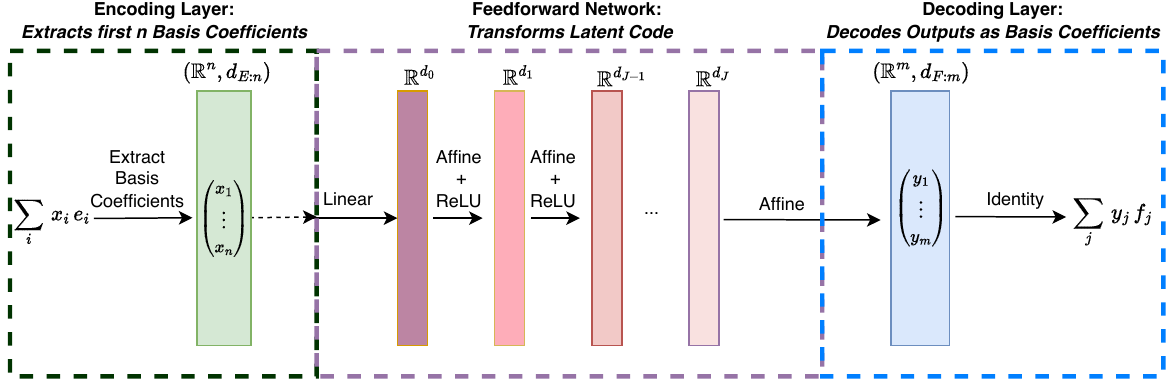}
\caption{Illustration of our ``static'' operator network in Definition~\ref{def:Neural_Filter}.  The network works in three phases.  {\color{green}{1)}} First inputs are encoded as finite-dimensional Euclidean data by mapping them to their truncated (Schauder) basis coefficients in the input space $E$.  {\color{violet}{2)}} Next these coefficients are transformed by a ReLU FFNN.  {\color{black}{3)}} The outputs of ReLU FFNN's output are interpreted as coefficients for a truncated (Schauder) basis in the output space $F$.}
\label{fig:Neural_Filter_GeneralForm}
\end{figure}

\noindent We first fix our working setting for this section
\begin{enumerate}[label=$( \mathcal{A}_{\arabic*} )$]
\item\label{itm:assumptions_one} \emph{Let $N, M \in \mathbb{N}_{+} \cup \{\infty\}$. Let $E$ and $B$ be two separable Fr\'echet spaces admitting Schauder bases $(e_h)_{h \leq N}$ and $(b_h)_{h \leq M}$. Let $E'$ and $B'$ be the topological dual of $E$ and $B$ respectively. Let $(\beta_h^{E})_{h \leq N}$ (resp. $(\beta_h^{B})_{h \leq M}$) be the unique sequence in $E'$ (resp.~$B'$) such that each $e \in E$ (resp.~each $b \in B$) has the following representation
\begin{equation*}
    e = \sum_{h = 1}^{N}\langle\beta_h^{E}, e\rangle e_h,\quad (resp.\,\,b = \sum_{h = 1}^{M}\langle\beta_h^{B}, b\rangle b_h),
\end{equation*}
where $\langle\cdot\,,\,\cdot\rangle$ is the canonical pairing between $E'$ and $E$ (resp.~between $B'$ and $B$). 
For each $n \in \mathbb{N}_{+}$, we denote by $P_{E:n}\,:\,(E, d_{E}) \rightarrow (\mathbb{R}^{n}, d_{E:n})$ the function defined as
\begin{equation}\label{eq:function_P_E_n}
    P_{E:n} : (E, d_{E}) \rightarrow (\mathbb{R}^{n}, d_{E:n}),\quad e \rightarrow (\langle \beta_{1}^{E}, e  \rangle, \langle \beta_{2}^{E}, e  \rangle, \ldots, \langle \beta_{n}^{E}, e  \rangle)^{T},
\end{equation}
where $d_{E:n}$ is the metric defined in Lemma \ref{lem:aux_lemma}. Moreover, $I_{E:n}\,:\,(\mathbb{R}^{n}, d_{E:n}) \rightarrow  (E, d_{E})$ is the function defined as  
\begin{equation}\label{eq:function_I_E_n}
    I_{E:n}\,:\,(\mathbb{R}^{n}, d_{E:n}) \rightarrow  (E, d_{E}),\quad \beta \rightarrow \sum_{h = 1}^{n} \beta_h e_h.
\end{equation}
Analogous definitions hold for $P_{B:n}$ and $I_{B:n}$.} 
\end{enumerate}

Before proceeding, we make the following trivial, yet useful remark
\begin{remark}\label{rmk:aux_remark}
    Let $F$ be a separable Fr\'echet space -- which can be either $E$ or $B$. Then, the maps $I_{F:n}$ and $P_{F:n}$ are continuous when $\mathbb{R}^{n}$ is endowed with the Euclidean topology. Therefore, they remain continuous when $\rr^{n}$ is now endowed with the metric $d_{F:n}$, because the induced topology coincides with the Euclidean one; see Lemma~\ref{lem:aux_lemma}.
\end{remark}

\indent In order to state our first approximation result, we introduce the notion of $C^{k}$-stability, $k \in \mathbb{N}$, of a non-linear operator mapping from a Fr\'echet space $E$ to a Fr\'echet space $B$. {\color{black} Notice that $C^{k}$-$\text{Dir}$ introduced in Definition \ref{def:directional} is the standard notion of directional differentiability whereas the $C^{k}$-stability formulation, although non-standard, will be useful for our approximation results. }
% \luca{Indeed: I do not understand the last sentence. It is the first notion that is related to directional differentiability, not the second one. Anastasis: why did you come up with the notion of Ck stability? Because of Teichman?}

\begin{definition}[$C^k$-Stability]\label{def:C_k_stability}
    Let $E$ and $B$ be two Fr\'echet spaces. A (non-linear) operator $f: E \rightarrow B$ is called $C^{k}$-stable if for every $m, n \in \mathbb{N}$, and every
    pair of continuous and linear maps $\tilde{I} : (\mathbb{R}^n, \|\cdot\|_2) \rightarrow (E, d_{E})$ and $\tilde{P} : (B, d_{B}) \rightarrow (\mathbb{R}^{m}, \|\cdot\|_2)$ the following composition
    \begin{equation}
        \tilde{P} \circ f \circ \tilde{I} : \mathbb{R}^{n} \rightarrow \mathbb{R}^{m},
    \end{equation}
    is of class $C^{k}$ in the usual sense.
\end{definition}
We now state and prove the following lemma.
\begin{lemma}\label{lem:lemma_Ck_stability}
    Let $E$ and $B$ be two Fr\'echet spaces. Let $f : E \rightarrow B$ be a (non-linear) operator between these two spaces which is $C^{k}$-$Dir.$ (see Subsection \ref{subsec:Frechet}, below Equation \eqref{eq:directional_derivative_order_k}).  Then, $f$ is $C^{k}$ stable as in Definition \ref{def:C_k_stability}. 
\end{lemma}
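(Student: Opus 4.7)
The plan is to leverage the chain rule for directional (Gâteaux) derivatives on Fr\'echet spaces, recalled in \cite{H1982AMS}, and then observe that on finite-dimensional domains the notion of $C^{k}$-Dir collapses to the classical $C^{k}$ notion in the usual Fr\'echet/partial-derivative sense. In more detail, the argument will have three steps.

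First, I would verify that every continuous linear map $L:F\to G$ between Fr\'echet spaces is $C^{k}$-Dir for every $k\in\N$. Indeed, by linearity of $L$ and by direct computation in \eqref{eq:directional_derivative} one obtains
\[
    DL(x)h \;=\; \lim_{t\to 0}\frac{L(x+th)-L(x)}{t} \;=\; L(h),
\]
which is manifestly continuous as a function of $(x,h)\in F\times F$ (it does not even depend on $x$). In particular, $D^{2}L\equiv 0$, hence $L$ is $C^{k}$-Dir for all $k$. Applied to $\tilde I$ and $\tilde P$, this shows that the ``outer'' maps in the composition are as smooth as one wishes in Hamilton's sense.

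Second, I would apply Hamilton's chain rule for $C^{1}$-Dir maps (Theorem 3.6.4 of \cite{H1982AMS}) iteratively. For $k=1$, the rule gives, for every $x\in\rr^{n}$ and $h\in\rr^{n}$,
\[
    D\bigl(\tilde P\circ f\circ \tilde I\bigr)(x)(h) \;=\; \tilde P\Bigl(Df\bigl(\tilde I(x)\bigr)\bigl(\tilde I(h)\bigr)\Bigr),
\]
and the joint continuity of $Df$ on $\tilde I(\rr^{n})\times E$, composed with the continuous linear maps $\tilde I$ and $\tilde P$, yields joint continuity of the left-hand side on $\rr^{n}\times\rr^{n}$. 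The same reasoning, applied to the higher order directional derivatives $D^{j}f$ assumed to exist and to be continuous jointly as in \eqref{eq:directional_derivative_order_k}, shows inductively that $\tilde P\circ f\circ \tilde I$ is $C^{k}$-Dir in the sense of Definition~\ref{def:directional} extended to order $k$.

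Third, I would bridge $C^{k}$-Dir and classical $C^{k}$ on $\rr^{n}$. Setting $h=e_{i}$ in the continuous directional derivative yields continuous partial derivatives $\partial_{i}(\tilde P\circ f\circ\tilde I)$ on $\rr^{n}$; by the standard result that continuous partial derivatives imply continuous Fr\'echet differentiability, the composition is classically $C^{1}$. Iterating this observation at each order up to $k$, using the joint continuity of the $j$-th directional derivatives $D^{j}(\tilde P\circ f\circ \tilde I)$ for $j\le k$, one obtains that all classical partial derivatives up to order $k$ exist and are continuous, i.e.\ $\tilde P\circ f\circ \tilde I$ is $C^{k}$ in the usual sense. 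Since $m,n$ and the particular continuous linear maps $\tilde I,\tilde P$ were arbitrary, this is precisely the $C^{k}$-stability of $f$.

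The only non-routine point is the bookkeeping in the iterated chain rule: one must check that the $k$-th derivative of $\tilde P\circ f\circ\tilde I$, which by Hamilton's (polylinear) chain rule is a fixed continuous algebraic expression in $D^{j}f$, $\tilde I$ and $\tilde P$ for $j\le k$, inherits joint continuity from the assumed joint continuity of $D^{j}f$. Given the linearity of $\tilde I$ and $\tilde P$ and the joint continuity of each $D^{j}f$ on $U\times E^{j}$, this is straightforward, which is why the conceptual content of the lemma is the passage from ``$C^{k}$-Dir into $B$'' to ``classical $C^{k}$ into $\rr^{m}$'' after pre- and post-composition with continuous linear maps.
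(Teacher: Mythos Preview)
Your proposal is correct and follows essentially the same route as the paper: show that continuous linear maps are $C^{k}$-Dir, invoke Hamilton's chain rule (Theorem~3.6.4 in \cite{H1982AMS}) to conclude that $\tilde P\circ f\circ\tilde I$ is $C^{k}$-Dir, and then evaluate the $k$-th directional derivative at standard basis directions $e_{j_1},\dots,e_{j_k}$ to obtain continuous partial derivatives and hence classical $C^{k}$-regularity. The paper's proof is slightly more compressed (it applies the chain rule once at order $k$ rather than inductively), but the argument is the same.
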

\begin{proof}
See Appendix \ref{app:proofs}, Subsection \ref{app:proof_of_lemma_Ck_stability}
\end{proof}

\noindent   The restriction of \emph{any} $C^{k}$-stable (non-linear) operator $f : E \rightarrow B$ between two Fr\'echet spaces $E$ and $B$ to \emph{any} non-empty compact subset $K \subseteq E$ extends to a $C^{k}$-stable (non-linear) operator defined on all $E$, namely the function $f$ itself. However, because our approximation theorems will hold for a \emph{pair} $(f , K )$ of a (non-linear) operator $f : E \rightarrow B$ and compact set $K$, then $f$ does not need to be smooth on $K$ but \emph{only} indistinguishable from a smooth operator on $K$. That is, our main results focus on non-linear operators belonging to the following trace class.

\begin{definition}[Trace Class {$C^{k,\lambda}_{\operatorname{tr}}( K,B)$}]\label{def:Ck_traceclass}
    Let $E$ and $B$ be two Fr\'echet spaces and let $\lambda > 0$ be a constant.  
    Let $K \subseteq E$ be a non-empty compact set. We say that a (non-linear and possibly discontinuous) operator $f :  E \rightarrow B$ belongs to the trace class $C^{k,\lambda}_{\operatorname{tr}}(K,B)$ if there exists a $\lambda$-Lipschitz\footnote{By $\lambda$-Lipschitz we mean that the optimal Lipschitz constant is $\lambda$. Notice that the case $\lambda = 0$ corresponds to the trivial case of a constant $f$ which is not treated in the present work.  } $C^k$-stable (non-linear) operator $F : E \rightarrow B$ satisfying
    \begin{equation*}
        F(x) = f(x)
    \end{equation*}
    for every $x \in K$. 
\end{definition}

\noindent The following Example \ref{example:indicator}, pictorially represented in \emph{Figure 4}, highlights our main interest in trace class maps. Precisely, these maps can be globally poorly behaved, even discontinuous, but indistinguishable from smooth functions ``locally'' (i.e.~on a particular compact subset of the input space $E$).     

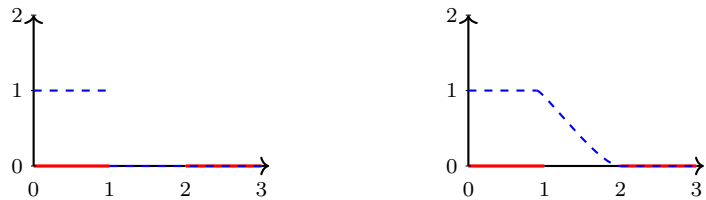
\begin{figure}[H]
\centering
\begin{subfloat}{
\centering
    \begin{tikzpicture}
        \draw[very thick,red] (0.0 , 0.0) node[black][right, below=1mm]{$0$}  -- (1.0, 0.0) node[black][right, below=1mm]{$1$};
        \draw[thick,black] (1.0 , 0.0) -- (2.0, 0.0);
        \draw[very thick,red] (2.0 , 0.0) node[black][right, below=1mm]{$2$}  -- (3.0, 0.0) node[black][right, below=1mm]{$3$};
        \draw[thick,->] (0.0 , 0.0) -- (0.0, 2.0)                   ;
        \foreach \y in {0,1,2}
        \draw (1pt,\y cm) -- (-1pt,\y cm) node[anchor=east] {$\y$}  ;
        \draw[thick,->] (3.0, 0.0) -- (3.1, 0.0)                    ;
        \draw[thick, dashed, blue] (0.0, 1.0) -- (1.0, 1.0)                 ;
        \draw[thick, dashed, blue] (1.0, 0.0) -- (3.0, 0.0)                 ;
    \end{tikzpicture}
}
\end{subfloat}
\hspace{50pt}       
\begin{subfloat}{
\centering
    \begin{tikzpicture}
        % Draw the compact in red.
        \draw[very thick,red] (0.0 , 0.0) node[black][right, below=1mm]{$0$}  -- (1.0, 0.0) node[black][right, below=1mm]{$1$};
         \draw[thick,black] (1.0 , 0.0) -- (2.0, 0.0);
        \draw[very thick,red] (2.0 , 0.0) node[black][right, below=1mm]{$2$}  -- (3.0, 0.0) node[black][right, below=1mm]{$3$};
        \draw[thick,->] (0.0 , 0.0) -- (0.0, 2.0)                   ;
        \foreach \y in {0,1,2}
        \draw (1pt,\y cm) -- (-1pt,\y cm) node[anchor=east] {$\y$}  ;
        \draw[thick,->] (3.0, 0.0) -- (3.1, 0.0)                    ;
        \draw[thick, dashed, blue] (0.0, 1.0) -- (0.9, 1.0);
        \draw[thick, dashed, blue] (0.9, 1.0) .. controls (1.0, 1.0) and (1.7, 0.0) .. (2.0, 0.0);
        \draw[thick, dashed, blue] (2.0, 0.0) -- (3.0, 0.0);
    \end{tikzpicture}
}       
\end{subfloat}
\caption{Pictorial representation of the fact that the indicator function of the interval $[0,1]$ belongs to $C^{k, \lambda}_{\operatorname{tr}}([0,1], \mathbb{R})$ for all $k \in \mathbb{N}$ and $\lambda > 0$ ; see Example \ref{example:indicator}.}
\end{figure}

\begin{example}[The indicator of the unit interval is in $C^{k,\lambda}_{\operatorname{tr}}(K,B)$]\label{example:indicator} 
    Let $E = B = (\mathbb{R},|\cdot|)$, $K = [0,1] \cup [2,3]$, and $f = I_{[0,1]}$, i.e. the indicator function of the interval $[0,1]$. Then, by means of a bump function, we immediately see that for every $k\in \mathbb{N}$ and $\lambda>0$, $f\in C^{k, \lambda}_{\operatorname{tr}}(K,B)$.  
\end{example}

\noindent At this point, some remarks are in order. In general, the problem of identifying when a map belongs to $C^{k, \lambda}_{\operatorname{tr}}( K,B)$ is a well-studied and independent area of research dating back to the beginning of the previous century (e.g., \cite{W1992}). Nonetheless, by virtue of Lemma~\ref{lem:lemma_Ck_stability} a full characterization of the pairs of functions and sets $(f, K)$ that belongs to $C^{k, \lambda}_{\operatorname{tr}}( K,B)$ in the special case that $E$ and $B$ are Euclidean spaces has been derived only (relatively) recently in a series of articles starting with \cite{F2005AOM}.  The interested reader may consult \cite{BRUE2021JFA} where the $C^{1,\lambda}_{\operatorname{tr}}( K,B)$ case is treated in the case that $B$ is Banach and $K$ is finite-dimensional (in a suitable metric-theoretic sense), for some $\lambda>0$ depending on $K$ and on $f$.  The case where $K$ is a subset of a separable Hilbert space is explicitly solved in \cite{Azagra2018JFA}.  \\

\indent Moreover, we provide results for the following trace class.
\begin{definition}[Trace Class $C_{\alpha,\operatorname{tr}}^{\lambda}(K, B)$]\label{def:holder_traceclass}
Let $E$ and $B$ be two Fr\'echet spaces, $\alpha \in (0,1]$ and $\lambda>0$ be two constants. Let $K \subseteq E$ be a non-empty compact set. We say that a (non-linear and possibly discontinuous) operator $f:E \rightarrow B$ belongs to the trace class $C_{\alpha,\text{tr}}^{\lambda}(K, B)$ if there exists an H\"older continuous (non-linear) operator $F:E \rightarrow B$ of order $\alpha$ and constant $\lambda$ satisfying
\begin{equation*}
    F(x) = f(x)
\end{equation*}
for every $x \in K$.
\end{definition} 
Functions with H\"{o}lder extensions are also actively studied.  For example, \cite[Theorem 1.12]{Lindenstrauss2000AMS} guarantees any Lipschitz function defined on a closed subset of a separable Hilbert space with values in a separable Hilbert space can be extended with the same Lipschitz constant.  However, in general, the existence of H\"{o}lder extensions between Fr\'{e}chet spaces, as well as quantitative estimates on the extension's H\"{o}lder constant, can be subtle \cite{Naor2001Mathematika}.

\noindent We state now our first main quantitative ``efficient'' approximation theorem; see Theorem \ref{thm:theorem_universality_static}. In order not to burden the statement of the theorem, we give here some definitions. First, for any $n \in \mathbb{N}_{+}$, we will use $\psi_{n}$ and $\varphi_{n}$ to denote the following two set-theoretic maps: 
\begin{equation}\label{eq:setteoreticmap_psi}
    \psi_n\,:\,(\mathbb{R}^{n}, d_{E:n})\longrightarrow (\mathbb{R}^{n},\|\cdot\|_2),\quad z\overset{\psi_n}{\longrightarrow}z,
\end{equation} 
\begin{equation}\label{eq:setteoreticmap_varphi}
\phi_n\,:\,(\mathbb{R}^{n},\|\cdot\|_2) \longrightarrow (\mathbb{R}^{n}, d_{B:n}),\quad z\overset{\varphi_n}{\longrightarrow}z.
\end{equation} 
When it is clear from the context, we suppress the index $n$ and write \textcolor{black}{$\psi$ instead of $\psi_n$ (resp.~$\varphi$ instead of $\varphi_n$)}.
Second, we introduce our first building block, which is the following neural operator, which we call a \textit{neural filter} since it filters out the part of the input not encoded in the first few Schauder basis vectors.
\begin{definition}[Neural Filters]\label{def:Neural_Filter}
Let $E$ and $B$ be two Fr\'echet spaces.  A non-linear operator $\hat{f}:E\rightarrow B$ is called a neural filter if it can be represented as
\begin{equation}
\label{eq:neuralfilter}
    \hat{f} \eqdef  I_{B:n^{out}} \circ \varphi_{n^{out}} \circ \hat{f}_{\theta} \circ \psi_{n^{in}} \circ P_{E:n^{in}}
\end{equation}
whereas: $I_{B:n^{out}}$ and $P_{E:n^{in}}$ are the functions defined in setting \ref{itm:assumptions_one}, $\psi_n$ and $\varphi_n$ are defined 
by \eqref{eq:setteoreticmap_psi} and \eqref{eq:setteoreticmap_varphi}, and $\hat{f}_{\theta} \in \mathcal{NN}_{[n]}^{\text{(P)ReLU}}$\footnote{See Subsection \ref{subsec:feed_forward}.}, with the multi-index $[n] \eqdef (d_{0}, \ldots, d_{J})$ where $d_{0} \eqdef n^{in}, d_{J} \eqdef n^{out}$ are positive integers.  The set of all neural filters with representation~\eqref{eq:neuralfilter} is denoted by $\mathcal{NF}^{\text{(P)ReLU}}_{[n]}$.  
\end{definition}

\begin{theorem}[Neural Filters {\color{black} Can} Approximate {\color{black} Regular} Non-Linear Operators]
\label{thm:theorem_universality_static}
\hfill\\
Assume setting~\ref{itm:assumptions_one}. Fix a compact subset $K \subseteq E$ with at-least two points, $k \in \mathbb{N}_{+}$, $\alpha \in (0, 1]$, $\lambda>0$ and a (non-linear) operator $f:E\rightarrow B$ belonging to either the trace-class $C^{k,\lambda}_{\operatorname{tr}}(K,B)$ or to the trace-class $C_{\alpha,\text{tr}}^{\lambda}(K, B)$. For every ``encoding error'' $\varepsilon_D>0$ and every ``approximation error'' $\varepsilon_A>0$ there exist $\hat{f} \in \mathcal{NF}^{\text{(P)ReLU}}_{[n_{\varepsilon_D}]}$ satisfying% the uniform estimate
\begin{equation}\label{eq:approximation_theorem}
    \max_{x\in K}\,
    d_{B}\big(
        f(x),
        \hat{f}(x)
        \big)
    \leq 
        \varepsilon_D 
    + 
        \varepsilon_A
,
\end{equation}
where $[n_{\varepsilon_D}]=(d_0,\dots,d_J)$ is a multi-index such with $d_0=n^{in}_{\varepsilon_D}$ and $d_J=n^{out}_{\varepsilon_D}$ defined as in Table~\ref{tab:Model_Complexity}.  {\color{black} The approximation error $\varepsilon_A$ is due to the fact that we will use approximation results for neural networks in a finite-dimensional setting\footnote{{\color{black} See Equation \eqref{eq:Proof_StaticResult_ReLUFFNN_Estimate}}}.}
\hfill\\
The ``model complexity'' of $\hat{f}_{\theta}$ is reported in Table~\ref{tab:Model_Complexity} and is a function of $f$'s regularity and the spaces $E$ and $B$.
\end{theorem}
\begin{proof}
See Appendix \ref{app:proofs}, Subsection \ref{app:proof_of_theorem_universality_static}
\end{proof}

\begin{table}[ht!]%[H]
    \centering
	\ra{1.3}
    \caption{\textbf{Optimal Approximation Rates - Neural Filter {\color{black}{with ReLU activation function}}:} The exact model complexity of the neural filter $\hat{f}$ in {Theorem~\ref{thm:theorem_universality_static}}, as a function of the target function $f$'s regularity, and the (linear) geometry of the input and output spaces $E$ and $F$. 
    \hfill\\
    \textbf{When $f$ belongs to the $C^{\lambda}_{\alpha}$-trace class:} the constants in Table~\ref{tab:Model_Complexity} are $C_1= 3^{n_{\varepsilon_D}^{in}}+3$ and $C_2 = 18 + 2\, n_{\varepsilon_D}^{in}$.  \hfill\\
    \textbf{When $f$ is belongs to the $C^{k,\lambda}$-trace class:} then $C_1 = 17 k^{n_{\varepsilon_D}^{in}+1} 3^{n_{\varepsilon_D}^{in}} n_{\varepsilon_d}^{in}$, $C_2 = 18\,k^2$, {\color{blue}{$C_3 = 85 (k+1)^{n_{\varepsilon_D}^{in}} 8^{k}$}}, and $C_{\bar{f}} = \max_{i=1,\ldots, n_{\varepsilon_D}^{in}} \|\bar{f}_{i}\|_{ C^{k}([0,1]^{n_{\varepsilon_D}^{in}})}$.
    }
    \label{tab:Model_Complexity}
    \resizebox{\columnwidth}{!}{%
    \begin{tabular}{@{}lll@{}}
    \cmidrule[0.3ex](){1-2}
	\textbf{Hyperparam.} & \textbf{Exact Quantity - High Regularity -} $\boldsymbol{C_{\operatorname{tr}}^{k,\lambda}(K,B)}$ \\
    \midrule
        $n_{\varepsilon_D}^{\text{in}}$ & 
            $
        \inf\Big\{n \in \mathbb{N}_{+}\,:\,\max_{x \in K} d_{E}(A_{E:n}(x), x) \le \frac{1}{\lambda}\omega^{\dagger}_{A,B}\left(\frac{\varepsilon_{D}}{2}\right)\Big\}$
    \\
        $n_{\varepsilon_D}^{\text{out}}$ 
        & 
        $\inf\left\{n \in \mathbb{N}_{+}\,:\,
        \underset{y \in F(K)}{\max}\,
        d_{B}( A_{B:n}(y), y)\le\frac{\varepsilon_{D}}{2}\right\}$ 
        \\
        Width & 
            $        
                n_{\varepsilon_D}^{in}(n_{\varepsilon_D}^{out}-1) 
                +
                C_1\biggl(
                        \big\lceil
                            (C_3C_{\bar{f}})^{n_{\varepsilon_D}^{in}/4k}
                                \,
                            (n_{\varepsilon_D}^{in})^{n_{\varepsilon_D}^{in}/8k}
                                \,
                            [\omega_{\phi}^{\dagger}(\varepsilon_{A})]^{-2k/n_{\varepsilon_D}^{in}}
                        \big\rceil
                    +
                        2
                \biggr)
            \cdot 
                \log_2\biggl(
                    8
                    \,
                    \big\lceil
                        (C_3C_{\bar{f}})^{n_{\varepsilon_D}^{in}/4k}
                            \,
                        (n_{\varepsilon_D}^{in})^{n_{\varepsilon_D}^{in}/8k}
                            \,
                        [\omega_{\phi}^{\dagger}(\varepsilon_{A})]^{-2k/n_{\varepsilon_D}^{in}}
                    \big\rceil
                \biggr)
            $
        \\
        Depth & 
            $
                n_{\varepsilon_D}^{out}\left(
                    1
                +
                    C_2\,
                        \biggl(
                            %L = N
                            % N - BEGIN
                            \big\lceil
                                (C_3C_{\bar{f}})^{n_{\varepsilon_D}^{in}/4k}
                                    \,
                                (n_{\varepsilon_D}^{in})^{n_{\varepsilon_D}^{in}/8k}
                                    \,
                                [\omega_{\phi}^{\dagger}(\varepsilon_{A})]^{-2k/n_{\varepsilon_D}^{in}}
                            \big\rceil
                            % N - END
                        +
                            2
                        \biggr)
                        \,
                        \log_2\biggl(
                            %L = N 
                            % N - BEGIN
                            \big\lceil
                                (C_3C_{\bar{f}})^{n_{\varepsilon_D}^{in}/4k}
                                    \,
                                (n_{\varepsilon_D}^{in})^{n_{\varepsilon_D}^{in}/8k}
                                    \,
                                [\omega_{\phi}^{\dagger}(\varepsilon_{A})]^{-2k/n_{\varepsilon_D}^{in}}
                            \big\rceil
                            % N - END
                        \biggr)
                        +
                        2
                        n_{\varepsilon_D}^{in}
                \right)
            $
    \\
    \arrayrulecolor{lightgray}
    \cmidrule[0.3ex](){1-2}
    \textbf{Hyperparam.} &  \textbf{Exact Quantity - Low Regularity -} $\boldsymbol{\operatorname{C}_{\alpha,\text{tr}}^{\lambda}(K, B)}$ 
    \\
    \arrayrulecolor{lightgray}\hline
    $n_{\varepsilon_D}^{\text{in}}$ & 
    $
    \inf\left\{n \in \mathbb{N}_{+}\,:\,\max_{x \in K} d_{E}(A_{E:n}(x), x) \le \left(\frac{1}{\lambda}\omega^{\dagger}_{A,B}\,\left(\frac{\varepsilon_{D}}{2}\right)\right)^{1/\alpha}\right\}
    $
    \\
    $n_{\varepsilon_D}^{\text{out}}$ 
            &
        $\inf\left\{n \in \mathbb{N}_{+}\,:\,
        \underset{y \in F(K)}{\max}\,
        d_{B}( A_{B:n}(y), y)\le\frac{\varepsilon_{D}}{2}\right\}$ 
    \\
        Width
        &
                $
                    n_{\varepsilon_D}^{in}\,(n_{\varepsilon_D}^{out}-1) 
                    +
                    C_1
                    \,
                    \max\biggl\{
                        n_{\varepsilon_D}^{in}
                        \,
                        \biggl\lfloor
                        \biggl(
                            [\omega_{\phi}^{\dagger}(\varepsilon_{A})]^{-n_{\varepsilon_D}^{in}/\alpha}
                            \,
                            V\big(
                                (131 \,\lambda )^{n_{\varepsilon_D}^{in}/\alpha}
                                \,
                                (n_{\varepsilon_D}^{in} n_{\varepsilon_D}^{out})^{n_{\varepsilon_D}^{in}/\alpha}
                            \big)
                        \biggr)^{1/n_{\varepsilon_D}^{in}}
                        \biggr\rfloor
                    ,
                            \biggl\lceil
                                [\omega_{\phi}^{\dagger}(\varepsilon_{A})]^{-n_{\varepsilon_D}^{in}/\alpha}
                                \,
                                V\big(
                                    (131 \,\lambda )^{n_{\varepsilon_D}^{in}/\alpha}
                                    \,
                                    (n_{\varepsilon_D}^{in} n_{\varepsilon_D}^{out})^{n_{\varepsilon_D}^{in}/\alpha}
                                \big)
                            \biggr\rceil
                        +
                            2
                    \biggr\}
                $
    \\
        Depth &  
            $
            % Cost of Deep Parallelization
            n_{\varepsilon_D}^{in}
            \left(
                    1
                + 
                % Depth of one Network
                    11\,
                    % L = N
                    \biggl\lceil
                        % N - BEGIN
                        [\omega_{\phi}^{\dagger}(\varepsilon_{A})]^{-n_{\varepsilon_D}^{in}/\alpha}
                        \,
                        V\big(
                            (131 \,\lambda )^{n_{\varepsilon_D}^{in}/\alpha}
                            \,
                            (n_{\varepsilon_D}^{in} n_{\varepsilon_D}^{out})^{n_{\varepsilon_D}^{in}/\alpha}
                        \big)
                        % N - END
                    \biggr\rceil
                    \,
                    +
                    C_2
                    \right)
            $
    \\
    \arrayrulecolor{black}
    \cmidrule[0.3ex](){1-2}
        \end{tabular}
    }% END Resize box
\end{table}

The rates in Table~\ref{thm:theorem_universality_static} are optimal for finite-dimensional Banach space input spaces and one-dimensional output space.  To see this, we only need to consider the case where $E$ is a finite-dimensional Euclidean space and $B$ is the real-line with Euclidean distance.  In this setting, neural filter model is a deep feedforward neural network with ReLU activation function.  In which case, a direct inspection of the approximation rates in Table~\ref{tab:Model_Complexity} reveal that they coincide with the approximation rates for H\"{o}lder functions derived in \cite{ZHS2022JMPA} which are optimal, as they achieve the Vapnik–Chervonenkis (VC) lower-bound on a real-valued model class' approximation rate (see \cite[Theorem 2.4]{ZHS2022JMPA}) determined by its VC-dimension\footnote{See \cite{Bartless2018JMLR} for details on the VC-dimension and near sharp computation of the VC-dimension of deep ReLU networks.}.

\begin{remark}[{Technicalities in Table~\ref{tab:Model_Complexity}}]
\label{remark:neuralFilter}
We emphasize that in the following, $\langle \cdot ,\cdot 
\rangle$ denotes the Euclidean inner product\footnote{NB, this notation coincides with our earlier use of the notation $\langle \cdot ,\cdot 
\rangle$ for the pairing of a TVS with its topological dual space by the Riesz representation theorem.}.  In particular, in the first column of Table~\ref{tab:Model_Complexity}, the functions $\bar{f}_i$ are defined by 
\[
\bar{f}_i \eqdef \langle \varphi \circ P_{B:n_{\varepsilon_D}^{out}} \circ F \circ I_{E:n_{\varepsilon_D}^{in}}\circ \psi^{-1} \circ W^{-1}, \bar{e}_i \rangle \eqdef \langle \hat{f} \circ W^{-1} ,\bar{e}_{i}\rangle
,
\]
for $i \in [[n_{\varepsilon_{D}^{in}}]]$, where the function $ W\,:\,(\mathbb{R}^{n_{\varepsilon_D}^{in}}, \|\cdot\|_2)\rightarrow (\mathbb{R}^{n_{\varepsilon_D}^{in}}, \|\cdot\|_2)$ is defined as:
\begin{equation*}
\hspace{-0.0cm}
    W\,:\,(\mathbb{R}^{n_{\varepsilon_D}^{in}}, \|\cdot\|_2)
    \rightarrow (\mathbb{R}^{n_{\varepsilon_D}^{in}}, \|\cdot\|_2) \rightarrow\mathbb{R}^{n_{\varepsilon_D}} \quad  x \rightarrow W(x) \eqdef  (2 r_{K})^{-1} (x - x_0) + \frac{1}{2}\bar{1}.
\end{equation*}
In the previous expression, we have $x_0 \in \mathbb{R}^{n_{\varepsilon_D}^{in}}$, $\bar{1} \eqdef (1, \ldots, 1) \in \mathbb{R}^{n_{\varepsilon_D}^{in}}$ and $r_K$ is a constant that depends on the compact $K$.
Moreover, in Table~\ref{tab:Model_Complexity} we use the abbreviated notation $A_{E:n} \eqdef I_{E:n_{\varepsilon_D}^{in}}\circ P_{E:n_{\varepsilon_D}^{in}}$, $A_{B:n} \eqdef I_{B:n_{\varepsilon_D}^{out}}\circ P_{B:n_{\varepsilon_D}^{out}}$, and $\omega_{A, E}$ is a modulus of continuity of the maps $(A_{E:n})_{n=1}^{\infty}$\footnote{See the proof of Theorem~\ref{thm:theorem_universality_static} for more details.} realizing the bounded approximation property on $E$ and where $\omega_{A,E}^{\dagger}$ denotes the generalized inverse\footnote{See Section \ref{subsec:generalized_inverse} for further details on generalized inverses.} of $\omega_{A,E}$.  
\end{remark}

\paragraph{Obstructions to Universal Approximation of Continuous Functions in Infinite-Dimensions}
The inability to extend higher-regularity (Lipschitz or smooth) functions while preserving their regularity, is precisely the obstruction lying at the heart of any quantitative approximation theorem between general infinite-dimensional Fr\'{e}chet spaces.  More precisely, a qualitative guarantee for continuous functions would require a version of McShane's extension theorem \cite{Beer2020McShane} for $B$-valued continuous maps but, to the best of our knowledge, such a result is only available when both $E$ and $B$ are separable Hilbert space \cite[Theorem 1.12]{Lindenstrauss2000AMS}.  However, such a result would not provide control on the target function's regularity.  Thus, without assuming that the target function belongs to a given trace-class, e.g.\ H\"{o}lder or smooth trace classes, as considered here, there is no a-priori way to clearly relate the complexity of a deep learning model, such as our neural filters, which depend on the regularity of the extension to regularity of the target function restricted to $K$.
    
Even in finite-dimensions highly-regular extensions, such as smooth extensions, see \cite{W1992} and \cite{F2005AOM}, need not exist. Moreover, it is not even clear if a uniformly continuous function can be extended to a uniformly continuous function with a proportional \textit{modulus of continuity} (see \cite{GutevJMAAExtension_2020} for details).

\subsection{Dynamic Case: {\color{black} Sequential} Universal Approximation {\color{black} Causal Operators}}\label{sec:main_results__Dynamic}

Theorem~\ref{thm:theorem_universality_static} was a static result certifying that suitable non-linear operators between infinite-dimensional linear metric spaces can be approximated by our ``neural filter'' operator network.  By training several neural filters, independently on separate time-windows, and then re-assembling then via a ``central'' \textit{hypernetwork} we can causally approximate ``any'' (generalized) dynamical system between such infinite-dimensional spaces. 

The construction of a \textit{finitely-parameterized} causal neural network approximator for these types of dynamical systems is our main result, and the main focus of this section. 
Our main result (Theorem~\ref{thm:theorem_universality_causal}) effectively certifies its ability to construct a CNO approximating any noiseless target function in this idealized approximation-theoretic framework.  
By a $\delta$-packing of a set, we mean the maximum number of points which can be placed in that set which are each at a distance of $\delta>0$ apart\footnote{See Appendix~\ref{subsec:covering_and_packing} for details.}.

We henceforth fix a \textit{non-degenerate} time grid (cfr.~Assumption 4.1 in \cite{AKP2022WP}), by which we mean a sequence $(t_{n})_{n \in \mathbb{Z}} \subseteq \mathbb{R}$, with $t_n<t_{n+1}$ for each $n\in\Z$, satisfying the following structural properties.\\ 

\noindent \textbf{Assumption 1 (Time Grid)} \textit{The time-grid $(t_n)_{n\in \mathbb{Z}}$ is assumed to satisfy 
% \luca{The referee is right: the 3rd property is implied by the others: we could write it anyway as a consequence}
\begin{enumerate}
    \item $t_0 = 0$;
    \item $0< \inf_{n \in \mathbb{Z}} \Delta t_n \leq \sup_{n \in \mathbb{Z}} \Delta t_n < \infty$;
    % \item $\inf_{n \in \mathbb{Z}} t_n = -\infty$ and $\sup_{n \in \mathbb{Z}} t_n = \infty$.\\
\end{enumerate}}
\noindent {\color{black}Note that the above assumptions imply that $\inf_{n \in \mathbb{Z}} t_n = -\infty$ and $\sup_{n \in \mathbb{Z}} t_n = \infty$.}

%\begin{assumptions}[Time Grid]
%\label{assumptions:timegrid}
%The time-grid $(t_n)_{n\in \mathbb{Z}}$ is assumed to satisfy
%\begin{enumerate}
%    \item $t_0 = 0$;
%    \item $0< \inf_{n \in \mathbb{Z}} \Delta t_n \leq \sup_{n \in \mathbb{Z}} \Delta t_n < \infty$;
%    \item $\inf_{n \in \mathbb{Z}} t_n = -\infty$ and $\sup_{n \in \mathbb{Z}} t_n = \infty$. 
%\end{enumerate}
%\end{assumptions}
In what follows, we will refer to each element $t_n$ in the non-degenerate time grid as ``time". We give now the following
\begin{definition}[Path Space]\label{def:pathspace}
    Let $(t_n)_{n \in \mathbb{Z}}$ be a fixed non-degenerate time grid. For every $n \in \mathbb{Z}$, let $E_{t_n}$ be a separable Fr\'echet space carrying a Schauder basis $(e^{(n)}_{h})_{h \in \mathbb{N}_{+}}$, and let $\mathcal{X}_{t_n}$ be a non-empty closed subset of $E_{t_n}$. The topological product $\mathcal{X} \eqdef \prod_{n \in \mathbb{Z}} \mathcal{X}_{t_n}$ is called path-space. The path space $\mathcal{X}$ is called linear if  $\mathcal{X}_{t_n} = E_{t_n},\; n\in \mathbb Z$, i.e. if $\mathcal{X} = \prod_{n \in \mathbb{Z}} E_{t_n}$.
\end{definition}
\noindent Before proceeding, we introduce the following notation. For any $n, m \in \mathbb{Z}$ with $n<m$ and $x\in \mathcal{X}$ we denote by $x_{(t_n:t_m]} \eqdef (x_{t_{n+1}}, \ldots, x_{t_{m}})$ and by $\mathcal{X}_{(t_n, t_m]} \eqdef \prod_{r=n+1}^{m}\mathcal{X}_{t_{r}}$. From Tychonoff's theorem\footnote{See Theorem 37.1 in \cite{MT2000}.} we know that an arbitrary product of compact spaces is compact in the product topology. Therefore, a path space $\mathcal{X} = \prod_{n \in \mathbb{Z}}\mathcal{X}_{t_n}$ is compact in the product topology if and only if each $\mathcal{X}_{t_n}$ is a compact subset of $E_{t_n},\;n\in\mathbb Z$. We will study \emph{causal maps} between path spaces. Briefly, what we mean with this statement is that we will analyze maps between path spaces that respect the causal forward-flow of information in time. Said differently, we will analyze maps for which, at any given time, the output must not depend on any future inputs. Because we are interested in {\color{black} quantitative} approximation results, rather than approximation guarantees via models whose number of parameters depends exponentially on the ``encoding error'' or on the ``approximation error'' (see Theorem \ref{thm:theorem_universality_static}), we will focus on the class of maps in the subsequent Definition \ref{def:causal_maps}, which are the analogue of the $C^{k,\lambda}_{\text{tr}}(K, B)$ and  $C^{\lambda}_{\alpha,\text{tr}}(K, B)$ maps introduced in Definition \ref{def:Ck_traceclass} and \ref{def:holder_traceclass}, respectively. Notice that Definition \ref{def:causal_maps} makes sense thanks to Lemma \ref{lemma: product of Schauder basis}, which states that the finite Cartesian product of Fr\'echet spaces with Schauder basis is a Fr\'echet space with a Schauder basis. 

\begin{definition}[Causal Maps of Finite Virtual Memory]
\label{def:causal_maps}
    Let $\mathcal{X} = \prod_{n \in \mathbb{Z}} \mathcal{X}_{t_n}$ be a compact path-space according to Definition \ref{def:pathspace}. Let also $\mathcal{Y}=\prod_{n \in \mathbb{Z}} B_{t_n}$ be a linear path-space; in particular, each $B_{t_n}$ is a separable Fr\'echet space with a Schauder basis. A map $f\,:\,\mathcal{X} \rightarrow \mathcal{Y}$ is called a causal map with virtual memory $r \ge 0$, if for every ``memory compression level'' $\varepsilon>0$ and each ``time-horizon'' $I\in \mathbb{N}_+$ there are $M=M(\varepsilon,I)\in\N$ with $M(\varepsilon,I)\in O(\varepsilon^{-r})$, and there are functions 
     $f_{t_i} \in C(\mathcal{X}_{(t_{i-M}, t_{i}]}, B_{t_{i}}),\, i \in [[I]]$
     satisfying
    \begin{equation}
    \label{eq:causal_maps}
        \max_{i \in [[I]]}
        \sup_{x \in \mathcal{X}} \,
            d_{B_{t_i}}(f(x)_{t_{i}}, f_{t_{i}}(x_{(t_{i-M}, t_{i}]}))<\varepsilon.
    \end{equation}
\end{definition}

Our main class of causal maps of finite virtual memory is the main deep learning model of this paper, namely, the causal neural operator outlined in Figure~\ref{fig:model_dynamiccase}.
\begin{definition}[Causal Neural Operator (CNO)]
\label{defn:CNO}
Let $\mathcal{X} = \prod_{n \in \mathbb{Z}} \mathcal{X}_{t_n}$ be a compact path-space according to Definition \ref{def:pathspace}. Let also $\mathcal{Y}=\prod_{n \in \mathbb{Z}} B_{t_n}$ be a linear path-space.  A causal map $f:\mathcal{X}\rightarrow\mathcal{Y}$ of finite virtual memory $M\ge 0$ is said to be a \textit{causal neural operator (CNO)} if: there exists a ``latent memory' $Q\in \mathbb{N}_{+}$, a multi-index $[d]$, and an ``initial latent code'' $z_0 \in \mathbb{R}^{P([d])+Q}$, a linear readout map $L\,:\,\mathbb{R}^{P([d])+Q}\rightarrow\mathbb{R}^{P([d])}$, and a (``hypernetwork'') ReLU FFNN $\hat{h}\,:\,\mathbb{R}^{P([d])+Q}\rightarrow\mathbb{R}^{P([d])+Q}$ such that the sequence of parameters $\theta_{t_i} \in \mathbb{R}^{P([d])}$, defined recursively by
\begin{equation*}
    \begin{split}
        \theta_{t_i} \eqdef & L(z_{t_i})\\
        z_{t_{i+1}} \eqdef &\begin{cases}
            \hat{h}(z_{t_i}) & \mbox{ if } t_i\ge 0 \\
            z_0 & \mbox{ if } t_i<0
        \end{cases},
    \end{split}
\end{equation*}
satisfying the representation for all $x\in \mathcal{X}$
\begin{equation*}
            f(x)_{t_n} 
        = 
            \hat{f}_{t_i}(x_{(t_{i-M}, t_i]})
\end{equation*}
where\footnote{See Definition \ref{def:Neural_Filter}.} $\hat{f}_{t_i} \in \mathcal{NF}_{[n_{\varepsilon_D}]}^{(P)ReLU}$, $
\hat{f}_{t_i} = I_{B_{t_i}:n_{\varepsilon_D^{out}}} \circ \varphi_{n_{\varepsilon_D^{out}}}\circ \hat{f}_{\theta_{t_i}} \circ \psi_{n_{\varepsilon_D^{out}}}\circ P_{E_{(t_{i-M},t_i]}:n_{\varepsilon_D}^{in}}
$ where each $\hat{f}_{\theta_{t_i}}$ is a (P)ReLU FFNN in $\mathcal{NN}_{[n_{\varepsilon_D}]}^{(P)ReLU}$ with multi-index $[n_{\varepsilon_D}]=(d_0,\dots,d_J)$ with $d_0=n^{in}_{\varepsilon_D}$ and $d_J=n^{out}_{\varepsilon_D}$.
\end{definition}

We will typically require our causal maps to possess a certain degree of regularity to deduce {\color{black} quantitative} approximation rates.  The most regular maps considered in this manuscript are causal maps of finite virtual memory which smooth trace-class maps can approximate at each instance in time. 
% \textcolor{red}{G.L.: Attention that Referee 2 pointed out that he does not understand this statement.} \luca{yes...}
\begin{definition}[Smooth Causal Maps of Finite Virtual Memory]
\label{def:causal_maps__Holder}
    Let $f:\mathcal{X}\rightarrow \mathcal{Y}$ be a causal map, in the notation of Definition~\ref{def:causal_maps}.
    If there exists a positive integer $k$ and a $\lambda>0$ such that  $f_{t_{i}} \in C_{tr}^{k,\lambda}(\mathcal{X}_{(t_{i-M}, t_{i}]}, B_{t_{i}}),\, i\in [[I]]$, then we say that the causal map $f$ is $(r, k,\lambda)$-smooth. 
    If, moreover, the functions $f_{t_i}$ belong to  $C_{tr}^{k,\lambda}
    (\mathcal{X}_{(t_{i-M}, t_{i}]}, B_{t_{i}})$ for every $k\in \mathbb{N}_+$ then we will say that $f$ is $(r,\infty,\lambda)$-smooth.
\end{definition}
We also derive approximation guarantees for the low-regularity analogue of smooth causal maps.
\begin{definition}[H\"{o}lder-Causal Maps of Finite Virtual Memory]
\label{def:causal_maps__Smooth}
    Let $f:\mathcal{X} \rightarrow \mathcal{Y}$ be a causal map, in the notation of Definition~\ref{def:causal_maps}.
    If there are an $\alpha \in (0, 1]$ and a $\lambda>0$ such that $f_{t_i} \in C_{\alpha,\text{tr}}^{\lambda}(\mathcal{X}_{(t_{i-M}, t_{i}]}, B_{t_{i}}),\, i\in [[I]]$, then we say that $f$ is $(r, \alpha, \lambda)$-H\"{o}lder.
\end{definition}

We now present the main result of the paper.  Our causal universal approximation theorem guarantees that the CNO model can approximate any causal map while ``preserving its forward flow of information through time''.  The quantitative approximation rates, describing the complexity of the CNO model implementing the approximation are recorded in Table~\ref{tab:Model_Complexity_DynamicCase} below.  

\begin{theorem}[CNOs are {\color{black} Sequential} Universal Approximators of Causal {\color{black} Operators}]
\label{thm:theorem_universality_causal}
Let $\mathcal{X} = \prod_{n \in \mathbb{Z}}\mathcal{X}_{t_n}$ be a compact path space, $\mathcal{Y} = \prod_{n \in \mathbb{Z}} B_{t_n}$ a linear path space\footnote{See Definition \ref{def:pathspace}.}, and $f : \mathcal{X}\rightarrow\mathcal{Y}$ either a $(r,k,\lambda)$-smooth or a $(r, \alpha, \lambda)$-H\"older causal map\footnote{See Definition \ref{def:causal_maps}.}.  
Fix ``hyperparameters'' $Q\in \mathbb{N}_{+}$ and $0<\delta<1$.
For every ``encoding error'' $\varepsilon_{D}>0$, every ``approximation error'' $\varepsilon_{A}>0$, and every ``time-horizon'' $I\in \mathbb{N}_+$ with $I_{\delta, Q} \eqdef \lfloor \delta^{-Q}\rfloor \ge I$ then there is an integer $M\lesssim \varepsilon_A^{-r}$, a multi-index $[d]$, a ``latent code" $z_0 \in \mathbb{R}^{P([d])+Q}$, a linear readout map $L\,:\,\mathbb{R}^{P([d])+Q}\rightarrow\mathbb{R}^{P([d])}$, and a (``hypernetwork'') ReLU FFNN $\hat{h}\,:\,\mathbb{R}^{P([d])+Q}\rightarrow\mathbb{R}^{P([d])+Q}$ such that the sequence of parameters $\theta_{t_i} \in \mathbb{R}^{P([d])}$, defined recursively by
\begin{equation*}
    \begin{split}
        \theta_{t_i} \eqdef L(z_{t_i})\\
        z_{t_{i+1}} \eqdef \hat{h}(z_{t_i}),
    \end{split}
\end{equation*}
{\color{black} $i\in \mathbb{N}$ and $z_{t_i}\eqdef z_0$ if $i<0$}% \del{with $i\in [[I]]\cup\{0\}$}
, satisfies the following uniform spatio-temporal estimate:
\begin{equation*}
    \begin{split}
    \max_{i \in 
        [[I]]
    }\,
    \sup_{x \in \mathcal{X}}\,\,
        d_{B_{t_i}}\big(
                \hat{f}_{t_i}(x_{(t_{i-M}, t_i]})
                    , 
                f(x)_{t_i}
            \big) <  \varepsilon_A + \varepsilon_D,
    \end{split}
\end{equation*}
where\footnote{See Definition \ref{def:Neural_Filter}.} $\hat{f}_{t_i} \in \mathcal{NF}_{[n_{\varepsilon_D}]}^{(P)ReLU}$, $
\hat{f}_{t_i} = I_{B_{t_i}:n_{\varepsilon_D^{out}}} \circ \varphi_{n_{\varepsilon_D^{out}}}\circ \hat{f}_{\theta_{t_i}} \circ \psi_{n_{\varepsilon_D^{out}}}\circ P_{E_{(t_{i-M},t_i]}:n_{\varepsilon_D}^{in}}
$ where each $\hat{f}_{\theta_{t_i}}$ is a (P)ReLU FFNN in $\mathcal{NN}_{[n_{\varepsilon_D}]}^{(P)ReLU}$ with multi-index $[n_{\varepsilon_D}]=(d_0,\dots,d_J)$ with $d_0=n^{in}_{\varepsilon_D}$ and $d_J=n^{out}_{\varepsilon_D}$ defined as in Table~\ref{tab:Model_Complexity}.  The model complexity of the hypernetwork $\hat{h}$ is recorded in Table~\ref{tab:Model_Complexity_DynamicCase}.  
\end{theorem}
\begin{proof}
See Appendix \ref{app:proofs}, Subsection \ref{app:proof_of_theorem_universality_causal}
\end{proof}

For brevity, we do not repeat the complexities of the neural filters approximating the target function on any time window and recall that the neural filters' approximation rates have previously been recorded in Table~\ref{tab:Model_Complexity}.  

\begin{table}[H]%[ht!]
    \centering
	\ra{1.3}
    \caption{\textbf{Causal Approximation Rates - (CNO) Causal Neural Operator:} The model complexity estimates of the hypernetwork $\hat{h}$ defining the CNO in {Theorem~\ref{thm:theorem_universality_causal}}, as a function of the target causal maps $f$'s regularity, the amount of memory allocated to the hypernetwork's latent space $Q\in \mathbb{N}_+$, and the length of the time-horizon the approximation is required to hold on $I\in \mathbb{N}_+$.}
    \label{tab:Model_Complexity_DynamicCase}
    \resizebox{\columnwidth}{!}{%
    \begin{tabular}{@{}ll@{}}
    \cmidrule[0.3ex](){1-2}
	\textbf{Hyperparam.} &  \textbf{Upper Bound}\\
    \midrule
    Width - Hyper. Net. ($\hat{h}$) & $(P([d])+Q)I_{\delta, Q} + 12$ \\
    Depth - Hyper. Net. ($\hat{h}$) & $\mathcal{O}\Biggr(
        I_{\delta, Q}\biggl(
        1+\sqrt{I_{\delta, Q}\log(I_{\delta, Q})}\,\Big(1+\frac{\log(2)}{\log(I_{\delta, Q})}\,
        \biggl[
            C + \frac{
            \Big(\log\big(I_{\delta, Q}^2\,2^{1/2}\big)-\log(\delta)\Big)
            }{\log(2)}
        \biggr]_+\Big)
        \biggr)
        \Biggr) $\\
    N. Param. - Hyper. Net. ($\hat{h}$) & 
    $
    \mathcal{O}\Biggl(I_{\delta, Q}^3(P([d])+Q)^2\,\left(1+(P([d])+Q)
    % &
    \sqrt{I_{\delta, Q}\log(I_{\delta, Q})}\,\left(1+\frac{\log(2)}{\log(I_{\delta, Q})}\,\left[C_d+
    % \\
    % &
    \frac{\Big(\log\big(I_{\delta, Q}^2\,2^{1/2}\big)-\log(\delta)\Big)}{\log(2)}\right]_+\right)\right)\Biggr),
    $
    \\
    \arrayrulecolor{lightgray}\hline
    \\
    Memory - Neural Filters ($M$) & $\mathcal{O}(\varepsilon_A^{-r})$ \\
    Complexity - Neural Filters & Table~\ref{tab:Model_Complexity}
    \\
    \arrayrulecolor{lightgray}\hline
    \\
    Constant ($C_d$) & $(P([d])+Q)I_{\delta, Q} + 12$
    \\
		\bottomrule
	\end{tabular}
	}% END Resize box
\end{table}

\subsubsection{Approximation of Smooth Causal Maps Between Hilbert Spaces on Structured Compact Sets}
\label{sec:main_results__Dynamic___HilbertCase}
The complexity bounds of the CNO model, guaranteed by Theorems~\ref{tab:Model_Complexity} and~\ref{thm:theorem_universality_causal} concern the approximation of relatively general functions between rather general Fr\'{e}chet spaces for arbitrary compact path spaces.  In particular, in this setting, the target function may be incompatible with the compact path space. {\color{black} However, in the case of Hilbert spaces, one can identify classes of compact sets over which a given smooth function can be efficiently approximated.} 
% \textcolor{red}{Check the phrasing.} We recall that, any orthonormal basis in a Hilbert space is a normalized Schauder basis.
As one may expect, all rates becomes much simpler if the all involved quantities are more structured. {\color{black} The following set of assumptions illustrates a broad class of compact sets where the CNO does not experience the curse of dimensionality, and a favorable choice of a Schauder basis becomes evident.} Furthermore, the bounds in Tables~\ref{tab:Model_Complexity} and~\ref{tab:Model_Complexity_DynamicCase} become notably simpler.
{\color{black} We now motivate our definition of these well-behaved compact sets.  We start by considering the following finite-dimensional example.}
{\color{black}
\begin{example}
\label{ex:Fin_dim}
Let $E=L^2([0,1])$ and consider the (orthonormal) Fourier basis $(e_j\eqdef e^{\mathrm{i} 2\pi j})_{j=0}^{\infty}$, where $\mathrm{i}^2=-1$.  
Fix a ``maximal frequency'' $J\in\mathbb{N}_+$, and let $\mathcal{X}\subset \operatorname{span}(\{
e^{\mathrm{i} 2\pi j}\}_{j=1}^I)$ be compact, and fix $\rho>0$.  
Set $C \eqdef e^{2\rho J}\,
\max_{x\in \mathcal{X}}\,
 \|x\| 
$.  
For any $j\in \mathbb{N}$, we have $\langle x,e_j\rangle=0$ if $j>J$ and 
$|\langle x,e_j\rangle|
\le 
\|x\|e^{2\rho j}e^{-2\rho j}\le Ce^{-2\rho j}
$ otherwise; whence, for all $j\in \mathbb{N}$
\[
    |\langle x,e_j \rangle|  
% \le 
%     \begin{cases}
%         \|x\|e^{2\rho i}e^{-2\rho i}
%         & \mbox{ if } i \le N
%     & 
%         0 & \mbox{ if } i >N
%     .
%     \end{cases}
\le 
    C e^{-2\rho j}
.
\]
\end{example}
Our well-behaved compact sets are an infinite-dimensional extension of our finite-dimensional thought experiment, in Example~\ref{ex:Fin_dim}, where we require that the coefficients of the higher-order basis vectors decay exponentially rapidly. 
Before formally defining them, let us continue the previous example
\begin{example}
\label{ex:Fin_dim__extended}
In the setting of Example~\ref{ex:Fin_dim}, let $\tilde{X}\subseteq L^2([0,1])$ consist of all $x\in L^2([0,1])$ with representation
\[
    x(t) 
    = 
        \underbrace{
            \sum_{j=0}^J\, \beta_j e^{\mathrm{i} 2\pi j}
        }_{\text{Element of }\mathcal{X}}
    +
        \underbrace{
            \sum_{j=J+1}^{\infty}\, \beta_j e^{\mathrm{i} 2\pi j}
        }_{\text{Small higher-order frequencies}}
\]
where $(\beta_j)_{j=0}^{\infty}\in \ell^2$, with $\sum_{j=0}^J\, \beta_j e^{\mathrm{i} 2\pi j}\in \mathcal{X}$ and, for each $j\ge J$, $|\beta_j|\le C e^{-2\rho j}$.  
Thus, the elements of $\tilde{X}$ are (not necessarily unique) ``extensions'' of elements of $\mathcal{X}$ by added rapidly decaying higher-order frequencies.  Moreover, for each $x\in \tilde{X}$, by construction new have
% \luca{why $t_n$? Besides, $i$ can be mistaken for the imaginary unit}
\begin{equation}
\label{eq:nat_decay_exp}
        |\langle x,e_{j}\rangle|
    \le 
        C e^{-2 \rho j}
.
\end{equation}
By the Grothendieck's compactness principle, see~\cite[Exercises 1.6]{DJSeq2012}, the set $\mathcal{X}$ is relatively compact in $L^2([0,1])$. 
% \luca{Why do we need all this construction? Can't we simply take from the beginning a set whose elements satisfy \[
%         |\langle x,e_j\rangle |
%     \le 
%         C e^{-2 \rho j }
% .
% \] }
\end{example}
We abstract Example~\ref{ex:Fin_dim__extended} into the following generally applicable condition.  An additional example of the exponential decay condition in~\eqref{eq:nat_decay_exp}, which we now generalize, will be provided in the context of mathematical finance, and will later be given in Section~\ref{s:Examples__ss:Finance___sss:PricingL2} below.
We additionally ask that our causal maps being approximated have a Markov-like property, in the sense that they only depend on the current state of the input sequence and not on the past.}\\ 

\noindent \textbf{Assumption 2 (Structured Case)} \textit{Fix constant $C
% ,\rho
>0$.  
Consider the setting of Definition~\ref{def:causal_maps} and suppose that $E_{t_n}$ and $B_{t_n}$, for each $n\in \mathbb{Z}_+$ are separable infinite-dimensional Hilbert spaces, whose inner products we denote by $\langle \cdot,\cdot\rangle_{E_{t_n}}$ (resp.~$\langle \cdot,\cdot\rangle_{B_{t_n}}$).  
For each $n\in \mathbb{Z}$% \textcolor{red}{$\mathbb{Z}_+$?} {\textcolor{green}{nein}}
, consider orthonormal basis $\{e_{n,i}\}_{i=0}^{\infty}$ of $E_{t_n}$ and $\{b_{n,i}\}_{i=0}^{\infty}$ of $B_{t_n}$.  
Fix an $(r,\infty,\lambda)$-smooth causal map $f:\prod_{n\in \mathbb{Z}}\,E_{t_n}\rightarrow\mathcal{Y}\eqdef 
% \prod_{n\in \mathbb{Z}}
% \,B_{t_n}
{\color{black} \mathbb{R}^{\mathbb{Z}}}
%\luca{Why that?}
$ with $M=M(\varepsilon,I)=0$ for each memory compression level $\varepsilon>0$ and each time-horizon $I\in \mathbb{N}_+$ in Definition~\ref{def:causal_maps__Holder}.
Consider a compact path space $\mathcal{X}\eqdef \prod_{n\in \mathbb{Z}}\,\mathcal{X}_{t_n}\subset \prod_{n\in \mathbb{Z}} E_{t_n}$ satisfying the following: there exists a constant $C>0$ such that for each $i\in \mathbb{N}$,
for all $n\in \mathbb{Z}$ and every $x\in \mathcal{X}$ 
\begin{equation}
\label{eq:f_compatability_condition}
        % \sum_{i=0}^{\infty}\,
        %     e^{\rho\,i}
        %     \,
        %     \langle x_{t_n},e_{n,i}\rangle_{E_{t_n}}^2
        % \le \frac1{C}
        {\color{black}
            |\langle x_{t_n},e_{n,i}\rangle_{E_{t_n}}|
            \le 
            % \frac{\sqrt{C|\ln(\max\{1,t\}^{-2}) - 2|}}{t^t}
            C e^{-2 \rho i }\\ \\
        }
    % \quad\mbox{ and }\quad
    %     % \sum_{i=0}^{\infty}\,
    %     %     e^{\rho\,i}
    %     %     \,
    %     %     \langle f(x)_{t_n},b_{n,i}\rangle_{B_{t_n}}^2
    %     % \le \frac1{C}
    %     {
    %         \langle f(x)_{t_n},b_{n,i}\rangle_{B_{t_n}}
    %         \le
    %         % \frac{\sqrt{C|\ln(\max\{1,t\}^{-2}) - 2|}}{t^t}
    %         \sqrt{C} e^{-\rho i /2}
    %     }
\end{equation}}
\vspace{0.05cm}

\begin{corollary}[Breaking the Curse of Dimensionality in the Structured Case]
\label{cor:dynamiccase__regular_subcase}
In the setting of Theorem~\ref{thm:theorem_universality_causal}, suppose that $\mathcal{X}$, $f$, and $\mathcal{Y}$ satisfy Assumption 2.  For every ``total approximation error $0<\varepsilon<1$'', every pair of ``hyperparameters'' $Q\in \mathbb{N}_+$ and $0<\delta<1$, and {\color{black} M=0}, and {\color{black} every compact path space $\mathcal{X}$ with $C=\mathcal{O}(\varepsilon)$, there is a} multi-index $[d]$, a ``latent code" $z_0 \in \mathbb{R}^{P([d])+Q}$, a linear readout map $L\,:\,\mathbb{R}^{P([d])+Q}\rightarrow\mathbb{R}^{P([d])}$, a (``hypernetwork'') ReLU FFNN $\hat{h}\,:\,\mathbb{R}^{P([d])+Q}\rightarrow\mathbb{R}^{P([d])+Q}$, a sequence of parameters $\theta_{t_i} \in \mathbb{R}^{P([d])}$, $\hat{f}_{t_i}$, and $
% I_{\delta,Q}
I\eqdef \lfloor \delta^{-Q}\rfloor
% \ge I
$ are as in Theorem~\ref{thm:theorem_universality_causal} satisfying 
% \textcolor{red}{G.L.: Why $x_{t_i}$?}
\[
    \begin{split}
        \max_{i \in 
            [[I]]
        }\,
        \sup_{x \in \mathcal{X}}\,\,
            \big\|
                \hat{f}_{t_i}(x_{
                % (t_{i-M}, 
                {\color{black} t_i }
                % ]
                })
                    -
                f(x)_{t_i}
            \big\|_{B_{t_i}} 
    <  
        \varepsilon
    .
    \end{split}
\]
Furthermore, the following complexity estimates hold for each neural filter $\hat{f}_{t_i}$:
\begin{enumerate}
    \item[(i)] \textbf{Encoding Dimension:} $n_{\varepsilon_D}^{\text{in}} $
    {\color{black} $
    \in 
    % \mathcal{O}(A + \ln(1/\varepsilon_D))
    \tilde{\mathcal{O}}(1)
    % \ln\big(\varepsilon^{-\rho/2}\big)+\ln(\lambda^{\rho/2}C^{\rho/2})
    $}, 
    % for some $A>0$.}
    \item[(ii)] \textbf{Decoding Dimension:} $n_{\varepsilon_D}^{\text{out}}  {\color{black}= 1}$,
    % \ln\big(\varepsilon^{-\rho/2}\big)+\ln(C^{\rho/2})$,
    \item[(iii)] \textbf{N. Params:} $P([d]) \in 
    % {\mathcal{O}(\varepsilon^{-3/2}\log(1/\varepsilon)^3)}
     {\color{black}\tilde{\mathcal{O}}(\varepsilon^{-3/2})}
    $.
\end{enumerate}
% where $\bar{\lambda}\eqdef \max\{1,\lambda\}$.
 {\color{black}Moreover, the number of parameters defining the hypernetwork is at most $\tilde{\mathcal{O}}(\sqrt{\varepsilon^{-9} 
% I
\delta^{-Q}
})$.
}
\end{corollary}
The parameter $\rho$ appears only multiplicative, and up to additive polylogarithmic factors, in the total parameter estimate (iii) in Corollary~\ref{cor:dynamiccase__regular_subcase}.  Therefore, it is suppressed by the $\tilde{\mathcal{O}}$ notation.  In particular, the curse of dimensionality has been avoided in this setting.

 {\color{black}
\begin{remark}[{Variant of Corollary~\ref{cor:dynamiccase__regular_subcase} in the Low Regularity Setting}]
\label{rem:_cor:dynamiccase__regular_subcase__LowRegVersion}
Corollary~\ref{cor:dynamiccase__regular_subcase} is stated for $(r,\infty,\lambda)$-smooth causal maps, but a similar result can also be obtained for causal maps that exhibit Lipschitz regularity by appropriately adjusting the proof. The primary difference is that the constant $C$ in inequality~\eqref{eq:f_compatability_condition} would decrease at a much faster rate along with the ``total approximation error'' $\varepsilon > 0$. This adjustment is necessary for the CNO to maintain dimension-free algebraic approximation rates in the associated compact path space.  A similar technique was recently applied in the \textit{static} low-regularity setting between Sobolev spaces, as noted in~\cite[Theorem 1]{KraLassasTakashi}. Thus, while the shape of the compact path spaces $\mathcal{X}$ regarding their exponential decay coefficient remains unchanged, the dependence on the diameter—indicated by the constant $C$—is what varies.
\end{remark}
}

\subsubsection{Discussion: How the CNO could be implemented}
\label{sec:Algorithmic_Implementation}

% {The proof of Theorem~\ref{thm:theorem_universality_causal} shows that, assuming access to idealized optimizers, we} can \textit{algorithmically} construct such approximating causal neural operators in the idealized setting, familiar to universal approximation theory \cite{KidgerLyons2022ICLR,yarotsky2017error,KP2022JMLR}, where one has complete access to a target function evaluated at all points in the input space, unobscured by any noise, as as well as a perfect optimization algorithm which can always identify a minimizer to any optimization problem.  In this idealized setting, where we can distill the approximation theoretic capabilities of our DL model apart from optimization or statistical learning question, we are able to explain its construction algorithmically.  
% {This idealized algorithm, Algorithm~\ref{alg:train_CNO}, is presented following our main result.}
{\color{black} This paper mainly examines the approximation capabilities of infinite-dimensional RNN architectures, specifically our CNO. We discuss what these structures can approximate when provided with sufficient noiseless data and ideal training algorithms. However, a natural question arises regarding their practical implementation. To address this, we present an idealized training procedure in Algorithm~\ref{alg:train_CNO}, which serves as a guide for implementing the CNO.}

\begin{algorithm}[ht]%[H]
\caption{Construct CNO}\label{alg:train_CNO}
\begin{algorithmic}
\SetAlgoLined
\Require Causal map $f: \mathcal{X} \rightarrow \mathcal{Y}$, errors: encoding $\varepsilon_D>0$ and approximation $\varepsilon_A>0$, hyperparameters: latent code complexity $Q\in \mathbb{N}_+$ and depth hyperparameter $\delta>0$.
\DontPrintSemicolon
    \State \tcc{Initialize CNO's hyperparameters}
        \State Viable time-steps: $I_{\delta, Q} \eqdef \lfloor \delta^{-Q}\rfloor$ 
        \State Memory: $M = \mathcal{O}(\varepsilon_A^{-r})$ 
        \State Set $[d]$ as in {Table~\ref{tab:Model_Complexity_DynamicCase}} 
        \State Get $\delta$-packing $\{u_i\}_{i=0}^I$ of $\overline{\operatorname{Ball}_{\mathbb{R}^Q}(0,1)}$ \tcp*{Optimally initial neural filter parameters}
    \State \SetKwBlock{ForParallel}{For $1 \le i\le I_{\delta, Q}$ in parallel}{end}
        \ForParallel{\State $\hat{f}_{\theta_{t_i}} 
        \leftarrow \underset{\hat{f}\in \mathcal{N}\mathcal{N}^{\operatorname{(P)ReLU}}_{[d]}}{\operatorname{argmin}}\,
                d_{B_{t_i}}(\hat{f}_{t_i}\big(
                    x_{(t_{i-M}, t_i]}
                    \big), f(x)_{t_i}) 
                <  
                    \varepsilon_A + \varepsilon_D
        $ \tcp*{Optimize neural filters}
        \State $z_{t_i} \leftarrow (\theta_{t_i},u_{i})$
        \tcp*{Ensure separation of neural filters' parameters}
        }
    \State \tcc{Learn Recurrence via Hypernetwork}
    \State $\hat{h}
        \leftarrow 
            \underset{h \in \mathcal{N}\mathcal{N}^{\operatorname{ReLU}}_{\cdot}}{\operatorname{argmin}}\,
                \sum_{1\le i \le I_{\delta, Q}-1}\,
                \|h(z_{t_i}) - z_{t_{i+1}}\|_2
                =
                0
        $
    \State $L:\mathbb{R}^{P([d])}\times \mathbb{R}^Q \rightarrow \mathbb{R}^{P([d])}$ projection onto first component
    % \\
    \State \Return Trained CNO: $(\hat{f},z_0)$.
\end{algorithmic}
\end{algorithm}
{\color{black} In particular, we find it beneficial to share insights from a recent implementation of a mild variant of the CNO described in~\cite{RezaTime}. In that research, the objective was to learn causal maps on finite-dimensional manifolds of non-positive curvature instead of infinite-dimensional linear spaces. Instead of utilizing neural filters, a non-Euclidean readout layer, as introduced in~\cite{PaponAnnie}, was employed. This layer is compatible with the geometry of the space in which the dynamical system operates. Nevertheless, the core hypernetwork structure was preserved, which dynamically updates the model parameters over time. The training procedure for this structure was nearly identical to Algorithm~\ref{alg:train_CNO}, with only the necessary modifications.\hfill\\
That work emphasizes a strong experimental focus, aiming to demonstrate the practicality of a training procedure such as Algorithm~\ref{alg:train_CNO}. The most accurate, stable, and rapid training method involved first training the model $\hat{f}_{\theta{t_1}}$ using empirical risk minimization, as outlined in Algorithm~\ref{alg:train_CNO}, until achieving nearly zero training loss. By ensuring the network was sufficiently large, we successfully avoided overfitting due to the double-descent phenomenon, as documented in studies on overparameterized neural networks~\cite{MeiMontanariDD,ChengTSKernelChar}. Our findings confirmed this holds true in our context as well, suggesting that similar results can be expected in the future when exploring the statistical properties of the CNO in an infinite-dimensional framework.
\hfill\\
\indent After training the initial layer to achieve satisfactory predictions at time one, we discovered that the most stable and efficient training approach was to utilize transfer learning. This involved initializing the training of each model, denoted as $\hat{f}_{\theta_{t_{i+1}}}$, using the parameters obtained from the previous training round, specifically $\theta_{t_i}$. We then conducted only a few epochs of stochastic gradient descent. In general, the parameters $\theta_{t_1}, \ldots, \theta_{t_{I_{\delta,Q}}}$ showed minimal variation from one another. Moreover, using $\theta_{t_i}$ as a starting point for training $\hat{f}_{\theta_{t_{i+1}}}$ facilitated the training of the hypernetwork. This is because $\hat{f}_{\theta_{t_{i+1}}}$ has multiple parametric representations that yield the same functional representation. Thus, initializing at $\theta_{t_i}$ ensured that we were not only learning the correct function within the function space but also remaining within the same region of the joint parameter space of the neural filters. This approach had the added benefit of requiring fewer training iterations, as the difference $|\theta_{t_i} - \theta_{t_{i+1}}|$ remained small. However, it is important to note, as discussed in~\cite{PetWojLim}, that the mapping from a deep learning model's parameter space to its function space is typically only locally Lipschitz, with an extremely large local Lipschitz constant. Therefore, even if $|\theta_{t_i} - \theta_{t_{i+1}}| \approx 0$, the corresponding functions $\hat{f}_{\theta_{t_i}}$ and $\hat{f}_{\theta_{t_{i+1}}}$ may still be significantly different in the function space.\hfill\\
\indent Lastly, once we obtained each $\theta_1,\dots,\theta_{I_{\delta,Q}}$, we learned the relevant recurrence relation by training the hypernetwork to minimize the mean squared error between the sequential parameters: 
\begin{equation} \label{eq:Hypernetloss} 
    \frac1{I_{\delta,Q}-1}
    \sum_{i=1}^{I_{\delta,Q}-1}\,
        |h(\theta_{t_i})-\theta_{t_{i+1}}|^2
. 
\end{equation}
\indent In~\cite{RezaTime}, we did not empirically need the theoretically necessary augmentation from $\theta_{t_i}$ to $z_{t_i}=(\theta_{t_i},u_i)$ using some $\delta$-packing $\{u_i\}_{i=1}^{I_{\delta,Q}}$ of the Euclidean unit ball. The loss~\eqref{eq:Hypernetloss} was numerically optimized using stochastic gradient descent, and in our companion paper~\cite{RezaTime}, we found that this provided satisfactory performance.
\hfill\\
\indent The advantage of using a hypernetwork is particularly evident at this stage, as it enabled us to train a recurrent neural operator -- the CNO -- without relying on backpropagation through time, a method known for its numerical instability. Instead, minimizing the loss function in equation \eqref{eq:Hypernetloss} follows the standard approach of empirical risk minimization, which does not involve real-time components and does not present the same numerical issues. Additionally, a second benefit of the hypernetwork becomes apparent: once trained, the CNO can generate predictions for future time points that extend beyond the training data it was optimized with.}

% \begin{remark}[{Algorithm~\ref{alg:train_CNO}} Can be Federated]
% Algorithm~\ref{alg:train_CNO} is a federated training algorithm\footnote{See for example \cite{LiIEEESPMag} for further details on federated learning algorithms.}.  In it, every neural filter acts as a nodes, which is trained independently from one another.  Once optimized, these nodes send their parameters to the hypernetwork, which acts as a server synchronizing each of nodes into a central DL model.  
% \end{remark}

We now use our results to approximate solution operators arising in stochastic analysis {\color{black} and pricing functional arising naturally in mathematical finance}.

\section{Applications to Mathematical Finance and Stochastic Analysis}
\textcolor{black}{\subsection{Static Examples: Pricing Functionals}\label{sec:example}
We now provide some examples of how our static approximation theorems are naturally amenable to pricing problems in mathematical finance.  Our aim is both to showcase the need for the general Fr\'{e}chet setting, as well as the naturality of Assumption 2. 
\subsubsection{Functionals on a Fr\'{e}chet Space which is in Not Banach: Forward Rate Curves}
We now provide a concrete example where one is interested in approximating a real-valued functional $F$ defined on a Fr\'echet space which is not a Banach space, and which shows the necessity of the generality of the setting of our work. This example stems from fixed-income or commodity markets theory, and it has appeared in \cite{LucaUATPaperI}, to which we refer for the details. See also \cite{BDG2}. We recall here that in modelling the dynamics of forward rates in fixed-income markets, or forward and futures contract prices in commodity markets, one is concerned with a stochastic process taking values in a suitable space of functions, $(x(t,\cdot))_{t\geq 0}$. Here, for every $t\geq 0$, $x(t,\cdot)$ is a random variable with state space being real-valued functions on $\mathbb R_+$, i.e., each sample defines a function $\xi\mapsto x(t,\xi), \xi\geq 0$. The minimal condition on the state space of curves is that they are locally integrable functions, see Carmona and Tehranchi \cite{carmonatehranchi} and Filipovi\'c \cite{filip} for forward rates. Local intergrability allows for defining zero-coupon bond prices, and swap prices in power and gas markets. 
Following Benth, Detering and Galimberti \cite{BDG2}, the price of a typical financial derivative in the power market can be expressed by the functional
$$
F(x)=\mathbb E[\chi(x)]
$$
where $\chi$ is a random field and $x$ is a real-valued function on $\mathbb R_+$. In practice, $x$ denotes the current term structure of power forward prices. Following the discussion above, we may choose the space of such functions to be $L_{loc}^1:=L_{loc}^1(\mathbb R_+)$, endowed with its natural topology of Fr\'echet space. Thus, $F:L_{loc}^1\rightarrow\mathbb R$. The random field $\chi$, may be compactly expressed as (see~\cite{BDG2}),
$$
\chi(x)=\mathfrak{P}(Z\mathcal I_D(x))),
$$
where $Z$ is a real-valued integrable random variable, $\mathfrak P:\mathbb R\rightarrow\mathbb R$ is a Lipschitz continuous function (being the option's payoff) and $\mathcal I_D$ is a linear functional on $L_{loc}^1$ defined as an integral of $x$ over a compact set $D\subset\mathbb R_+$, namely $\mathcal{I}_D(x)=\int_D x(\xi)\,d\xi,\,\,x\in L^1_{loc}$.
The following lemma, which shows continuity of $F$ with respect to the natural locally convex topology of $L_{loc}^1$:
\begin{lemma}[{\cite[Lemma 6.1]{LucaUATPaperI}}]\label{lemma:L1loc}
The functional $F(x)=\mathbb E[\chi(x)]$ is locally Lipschitz on $L^1_{loc}$. 
\end{lemma}
When considering forwards and options on these, the set $D$ is typically a contractually specified week, month, quarter or year. Due to the continuity result above, we are in the context of our neural networks on a Fr\'echet space.
\hfill\\
Next, we show that under additional, realistic structural conditions the functional $F$, introduced above, can be efficiently approximated.  We do this by verifying the conditions of Corollary~\ref{cor:dynamiccase__regular_subcase}.}

\subsubsection{Efficient Approximation Rates for Pricing with Smooth Rapidly Decaying Functions}
\label{s:Examples__ss:Finance___sss:PricingL2}
{\color{black} 
We now focus on the naturality of Assumption 2.  Recall that the set of rescaled Hermite functions $\{H_k\}_{k=0}^{\infty}$ are an orthonormal basis, and thus a Schauder basis, of $L^2(\mathbb{R})$, where for each $k\in \mathbb{N}$, $H_k$ is defined by
\begin{equation}
\label{eq:hermitefunctions}
        H_k(x)
    = 
        \frac{
            (-1)^k
            e^{\frac{x^2}{2}}
        }{
            \sqrt[4]{\pi}\sqrt{2^k k!}
        }
        \,
        \frac{d^k}{dx^k}e^{-x^2}
\end{equation}
where $H_0(x) = e^{-x^2/2}/\sqrt[4]{\pi}$, where $H_1(x)= xe^{-x^2/2}/(\sqrt[4]{\pi}\sqrt{2})$, and so on.  
If we restrict Lemma~\ref{lemma:L1loc} to $L^2(\mathbb{R})$ instead of the largest set $L^1_{\operatorname{loc}}(\mathbb{R})$ then we still have Lipschitz continuity but with respect to the usual metric on $L^2(\mathbb{R})$.
Accordingly, every $f\in L^2(\mathbb{R})$ has a unique basis expansion
\begin{equation}
\label{eq:basis_expansion__HermiteFunctions}
        f 
    = 
        \sum_{k=0}^{\infty}
        \,
            \beta_k^f
            H_k
\end{equation}
where for each $k\in \mathbb{N}$, $\beta_k^f\eqdef \langle f,H_k\rangle_{L^2(\mathbb{R})}$.
}
% \begin{example}
% \label{ex:Fin_dim}
% Let $f\in L^2(\mathbb{R})$ and, for each $k\in \mathbb{N}_+$, let $\beta_k\eqdef |\langle f,\psi_k \rangle_{L^2(\mathbb{R})}|$.  
% By the Weyl asymptotics, we know that for any $s>0$ if $f$ belongs to the Sobolev space $H^s(\mathbb{R})$ then 
% \begin{equation}
% \label{eq:polynomial_decay__Weyl}
%         \beta_k 
%     \asymp
%         (1+k)^{-2s}
% .
% \end{equation}
% Consequently, if $f\in H^{\infty}(\mathbb{R})\eqdef \bigcap_{s>0}H^s(\mathbb{R})$ then~\eqref{eq:polynomial_decay__Weyl} implies that: there is some $r>0$ such that
% \begin{equation}
% \label{eq:polynomial_decay__Weyl}
%         \beta_k 
%     \lesssim 
%         e^{-r k}
% ,
% \end{equation}
% where $\lesssim$ hides a constant depending on $f$ but independent of each $k\in \mathbb{N}$.
% \end{example}
% }
\textcolor{black}{
We now explain the decay condition in~\eqref{eq:f_compatability_condition} has a very natural interpretation using the Hermite polynomials.  It can be understood as a joint tail-decay and smoothness condition.  We recall that rapid decay of the Fourier transform is a natural expression of smoothness by the Schwartz-Paley-Wiener theorem, see e.g.~\cite[Theorem 7.2.2]{SFourier}, which implies that a function is smooth only if its Fourier transform's coefficients decay super-polynomially.  Similarly, the spectral characterizations of Sobolev spaces $H^s(\mathbb{R})$ for $s>0$ as any $L^2(\mathbb{R})$ functions whose Fourier coefficients decay no slower than $(1+k)^{2s}$ by the Weyl asymptotics, see e.g.~\cite[Corollary 9.35]{BorSpec2020} or similar results.
\begin{example}[{Assumption 2 is a Decay and Smoothness Condition}]
\label{ex:SmoothessCondition}
As recently shown in~\cite[Theorem 1.1]{NHermite2025}, for any $f\in L^2(\mathbb{R})$ if $f$ and its Fourier transform $\hat{f}$ satisfy the exponential decay condition: 
there are $\lambda,c>0$ satisfying
\begin{equation}
\label{eq:condition_niceness}
        \underbrace{
            |f(x)| \le c e^{(-1/2 + \lambda)x^2}
        }_{\text{Decay Condition}}
    \mbox{ and }
        \underbrace{
            |\hat{f}(\xi)| \le c e^{(-1/2 + \lambda)\xi^2}
        }_{\text{Smoothness Condition}}
\end{equation}
for each $x,\xi \in \mathbb{R}$.  Then, there exist $C,r>0$, only depending on $c$ and on $\lambda$, such that the coefficients sequence $(\beta_k)_{k=0}^{\infty}$ in~\eqref{eq:basis_expansion__HermiteFunctions} satisfies
\begin{equation}
\label{eq:C_to_c__to_decay}
    |\beta_k^f| \le C\, e^{-r k}
.
\end{equation}
Let $\mathcal{K}\subset L^2(\mathbb{R})$ consist of all $f$ satisfying condition~\eqref{eq:condition_niceness}, for some fixed values of $c,\lambda>0$.  
Then, since the constant $C,r>0$ in~\eqref{eq:C_to_c__to_decay} only depended on the constant $c$ and on $\lambda$ in~\eqref{eq:condition_niceness} then, indeed there are constants $C,r>0$ such that: for every $f\in \mathcal{K}$\, $|\beta_k^f|\le C\, e^{-r k}$.  Whence, $\mathcal{K}$ satisfies the decay condition in~\eqref{eq:f_compatability_condition}.
\end{example}
Since $L^2$ can be Lipschitz embedded into $L^1_{loc}$, then Lemma~\ref{lemma:L1loc} clearly still holds if we restrict the functional $F$ to $L^2$ now.  Thus, $F$ is still Lipschitz on $L^2(\mathbb{R})$.
Thus, $F$ can be approximated on the compact set $\mathcal{K}$ of Example~\ref{ex:SmoothessCondition} using the efficient approximation guarantee in Corollary~\ref{cor:dynamiccase__regular_subcase}, so long as the constant $c$ in~\eqref{eq:condition_niceness} is chosen small enough so that the constant $C$ small enough, as noted in Remark~\ref{rem:_cor:dynamiccase__regular_subcase__LowRegVersion}.}

\subsection{{\textcolor{black}{Dynamic Examples: Solution Operators of Stochastic Differential Equations}}}\label{sec:applications_StochAnalMathFin}
We apply our results to show that several solution operators from stochastic analysis can be approximated by the CNO.  Our neural network model can approximate stochastic processes without assuming strong structural conditions describing their evolution.We illustrate our result's implications for obtaining numerical solutions to SDEs, and we discuss the implications for more general stochastic processes, e.g. processes with jumps, towards the end of this section.

\begin{figure}[H]
\centering
\begin{subfigure}[b]{.85\textwidth}
    \centering
   \includegraphics[width=.65\textwidth]{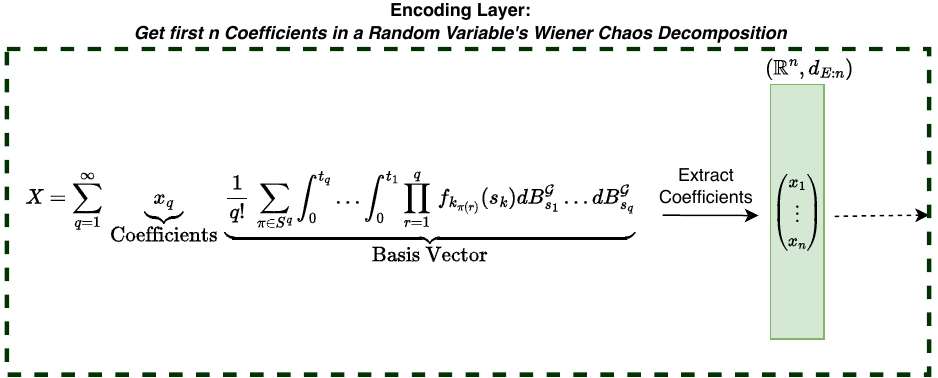}
   \caption{Encoding Layer}
   \label{fig:Neural_Filter_StochasticsApplicationsForm__Encoding}
\end{subfigure}
%%%
\begin{subfigure}[b]{.85\textwidth}
\centering
\includegraphics[width=.65\textwidth]{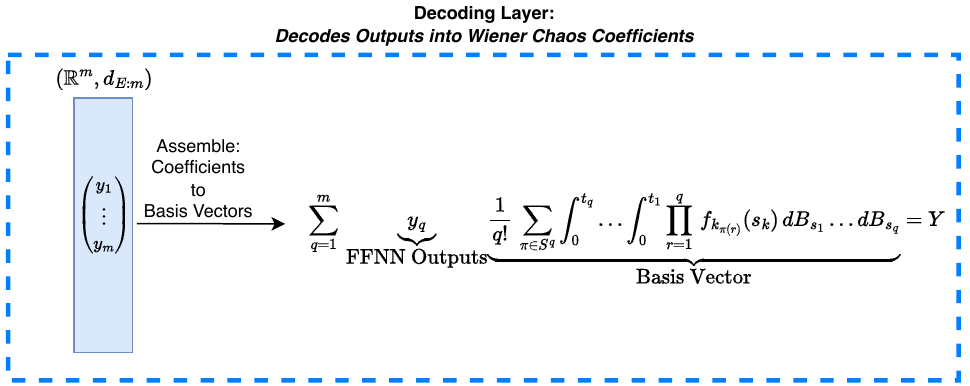}
   \caption{Decoding Layer}
   \label{fig:Neural_Filter_StochasticsApplicationsForm__Decoding}
\end{subfigure}
\caption{{Illustration of our ``static'' operator network in Definition~\ref{def:Neural_Filter} specialized to the geometry of the input space $L^2(\Omega,\mathcal{G}_T,\mathbb{P})$ and the output space $L^2(\Omega,\mathcal{F}_t,\mathbb{P})$; for $\sigma$ algebras $\mathcal{G}$ and $\mathcal{F}$ on a sample space $\Omega$.  The network is works in three phases.  {\color{green}{1)}} First inputs are encoded as finite-dimensional Euclidean data by mapping them to their truncated (Schauder) basis coefficients in the input space $E$.  {\color{violet}{2)}} Next these coefficients are transformed by a ReLU FFNN.  {\color{black}{3)}} The outputs of ReLU FFNN's output are interpreted as coefficients a Wiener Chaos expansion a truncated (Schauder) basis in the output space $F$.}}
\label{fig:Neural_Filter_StochasticsApplicationsForm}
\end{figure}

\subsection{A primer on Wiener Chaos}
\label{s:Applications__ss_Stochastics___sss_Background}

We fix a probability space $\big(\Omega,\mathcal{F},\mathbb{P}\big)$ supporting a standard one dimensional Brownian motion $(B_t)_{t\ge 0}$ and let $\mathbb{F} \eqdef (\mathcal{F}_t)_{t\ge 0}$ denote the complete and right-continuous enlargement of the filtration generated by $(B_t)_{t\ge 0}$. 
We recall that the Ito (stochastic) integral of a (deterministic) simple function $f=\sum_{i=1}^k\, \beta_i I_{[0,t_i]}$ in $L^2([0,t])$, where $0\le t_1< \dots < t_k\le t$ is the Gaussian random variable
\begin{equation}
\label{eq:stoch_integral__simplefunctions}
\int_0^t\, f(s)\, dB_s 
 \eqdef  
\sum_{i=1}^k\,
    \beta_i 
    \,
    \big(
        B_{t_i}-B_{t_{i-1}}
    \big)
.
\end{equation}
More generally, the Ito integral of any function $f\in L^2([0,t])$ is defined as the limit in $L^2(\Omega,\mathcal{F}_t,\mathbb{P})$ of a sequence $\{\int_0^t\, f_k(s)\, dB_s \}_{k=1}^{\infty}$ where the $\{f_k\}_{k=1}^{\infty}$ is any choice of simple integrands converging to $f$ in $L^2([0,t])$.  Thus, $\int_0^t\, f(s)\,dB_s$ is a centered normal random variable with variance $\int_0^t\,f^2(s)\,ds$.  We also note that such a sequence always exists and $\int_0^t\, f(s) \, dB_s$ is independent of the particular choice of the approximating sequence $\{f_k\}_{k=1}^{\infty}$.

Using tools common to (Malliavin) stochastic calculus we may exhibit an orthonormal basis of $L^2(\Omega,\mathcal{F}_t,\mathbb{P})$.  We refer the interested reader to \cite{nualart2006malliavin} for a more detailed discussion on this construction.  This construction relies on a system of orthogonal polynomials $\{h_k\}_{k=1}^{\infty}$ known as \textit{Hermite polynomials}, a rescaled-variant of the Hermite functions defined in~\eqref{eq:hermitefunctions}, $\{h_k\}_{k=0}^{\infty}$ are defined by
\[
        h_{k}(x) 
    = 
        {\color{black} (-1)^k e^{\frac{x^2}{2}} \frac{d^k}{dx^k}e^{-\frac{x^2}{2}}}
            % x h_{k}(x) - k h_{k-1}(x)}
,
\]
and, for instance, $h_0(x) \eqdef 1$, $h_1(x)\eqdef x$, and so on.  

By means of the Ito stochastic integral and the Hermite polynomials we may define the $q^{th}$ Wiener Chaos to be the subspace $\mathcal{H}_t^q$ of $L^2(\Omega,\mathcal{F}_t,\mathbb{P})$ spanned by the random variables of the form
\[
    I^q(f)  \eqdef  
            h_q\biggr(
                \int_0^t \, f(s)\, dB_s
            \biggl) 
,
\]
where $f\in L^2([0,t])$, where $q\in \mathbb{N}_+$ and $\mathcal{H}_t^0 \eqdef \rr$.  The Wiener chaos $(\mathcal{H}_t^q)_{q=0}^{\infty}$ produces an orthogonal decomposition, given in \cite[Theorem 1.1.1]{nualart2006malliavin}, of $L^2(\Omega,\mathcal{F}_t,\mathbb{P})$, meaning that for each pair of random variables $Y_q\in \mathcal{H}_t^q$ and $Y_{\tilde{q}}\in \mathcal{H}_t^{\tilde{q}}$ are orthogonal in $L^2(\Omega,\mathcal{F}_t,\mathbb{P})$ whenever $q\neq \tilde{q}$; every random variable $Y\in L^2(\Omega,\mathcal{F}_t,\mathbb{P})$ can uniquely be decomposed as
\[
    Y 
        =
    \sum_{q=0}^{\infty}\, 
    Y_q
,
\]
where $Y_q\in \mathcal{H}_t^q$ for each $q\in \mathbb{N}$ and where the sum converges in $L^2(\Omega,\mathcal{F}_t,\mathbb{P})$.

Since the Wiener Chaos is an orthogonal decomposition of $L^2(\Omega,\mathcal{F}_t,\mathbb{P})$ then the union of any set of orthogonal basises of each $\mathcal{H}_t^q$ is an orthogonal basis of $L^2(\Omega,\mathcal{F}_t,\mathbb{P})$ itself.  Therefore, we only need to exhibit an orthogonal basis of each $\mathcal{H}_t^q$ for $q\in \mathbb{N}_+$.  

We leverage the \textit{symmetrized tensor product} of elements $f_1,\dots,f_q\in L^2([0,t])$ defined by
\[
\operatorname{sym}\big(
    f_1\otimes \dots \otimes f_q
\big)
     \eqdef  \frac1{q!}\, \sum_{\pi \in S^q}\, f_{\pi(1)}\otimes \dots \otimes f_{\pi(q)}
\]
where $S^q$ is the set of permutations of the indices $\{1,\dots,q\}$.  More concretely, the Hilbert space generated by the symmetrized tensor product\footnote{See \cite[Chapter IV page 43]{Bourbaki1998Algebre1a3}.} is identified\footnote{See \cite[Lemma 8.4.2]{PecattiTaqqu_2011_ChaosDiagrams}.} with the set of symmetric functions%
\footnote{A ``function'' $f\in L^2([0,t]^q)$ is \textit{symmetric} if $f(s_1,\dots,s_q)=f(s_{\pi(1)},\dots,s_{\pi(q)})$, for all $\pi\in S^q$,
outside a set of $q$-dimensional Lebesgue measure $0$.  } in $L^2([0,t]^q)$ which we denote by $L^2_{\operatorname{sym}}([0,t]^q)$.  Since the $q$-fold symmetrized tensor product is a subspace of the (usual) $q$-fold tensor product then the identification of the $q$-fold symmetric tensor product of $L^2([0,t])$ with $L^2_{\operatorname{sym}}([0,t]^q)$ may be further simplified to
\[
    \operatorname{sym}\big(f_1\otimes \dots \otimes f_q \big)\,
        \leftrightarrow
    \frac1{q!}\, \sum_{\pi \in S^q}\, 
        \prod_{i=1}^q\, f_{\pi(i)}(s_i)
.
\]

The connection between the symmetrized tensor product and the $q^{th}$ Wiener Chaos is that the $q^{th}$ Wiener Chaos $\mathcal{H}_t^q$ is structurally identical to $L^2_{\operatorname{sym}}([0,t]^q)$ (identified with the $q$-fold symmetrized tensor product of $L^2_{\operatorname{sym}}([0,t])$ with itself). 
The map realizing this identification sends any $f\in L^2_{\operatorname{sym}}([0,t]^q)$ to its $q$-fold multiple stochastic integral 
\begin{equation}
\label{eq:HilbertSpaceIsomorphism}
    f 
        \mapsto
    \int_0^{t_q}\dots \int_0^{t_1}\,
    f(s_1,\dots,s_q)\,
    dB_{s_1}\dots dB_{s_q}.
\end{equation}
Moreover, the map~\eqref{eq:HilbertSpaceIsomorphism} is linear isometric isomorphism preserving inner products\footnote{See \cite[Proposition 8.4.6 (1)]{PecattiTaqqu_2011_ChaosDiagrams}.}.  Consequentially, any orthogonal basis of $L^2_{\operatorname{sym}}([0,t]^q)$ is sent to an orthogonal basis of $\mathcal{H}_t^q$ under this identification.  Since an orthogonal basis of $L^2_{\operatorname{sym}}([0,t]^q)$ is given by the set
\[
    \operatorname{sym}\big(
        f_1\otimes \dots \otimes f_q
    \big)
\]
where $\{f_i\}_{i=1}^{\infty}$ is an orthogonal basis\footnote{See \cite[page 153, point (iii)]{PecattiTaqqu_2011_ChaosDiagrams}.} of $L^2([0,t])$ then the identification~\eqref{eq:HilbertSpaceIsomorphism} implies that the corresponding set of random variables 
\begin{equation}
\label{eq:orthogonal_basis_symmetrized_tensorproduct}
    \int_0^{t_q}\dots \int_0^{t_1}
    \,
    \operatorname{sym}\big(
        f_1\otimes \dots \otimes f_q
    \big)(s_1,\dots,s_q)
    \,
    dB_{s_1}\dots dB_{s_q}
    ,
\end{equation}
is an orthogonal basis of the $q^{th}$ Wiener Chaos $\mathcal{H}_t^q$.  
Such an orthogonal basis of $L^2([0,t])$ is given by the Fourier basis whose elements are
\[
        f_{j,i}(x)
     \eqdef 
    \begin{cases}
    \sqrt{\frac{2}{t}}\sin\left(\frac{j\pi x}{t}\right)
        &\mbox{if } \,i\mbox{ = 0}\\
    \sqrt{\frac{2}{t}}\cos\left(\frac{(j-1)\pi x}{t}\right)
        &\mbox{if } \,i\mbox{ = 1}
    ,
    \end{cases}
\]
where $j\in \mathbb{N}_+$ and $i\in \{0,1\}$.  For convenience, with some abuse of notation, we denote an enumeration of $\{f_{i,j}\}_{i\in \mathbb{N},j\in \{0,1\}}$ by $\{f_k\}_{k=1}^{\infty}$.  
Consequentially, an orthogonal basis of $L^2(\Omega,\mathcal{F}_t,\mathbb{P})$ is given by the countable family of random variables
\[
    Z_{(k_1,\dots,k_q)}  \eqdef 
    \frac1{q!}\, \sum_{\pi \in S^q}\, 
    \int_0^{t_q}\dots \int_0^{t_1}
        % Proper Notation
        \prod_{r=1}^q\, 
        % Shorthand
        f_{k_{\pi(r)}}
        (s_k)
        \,
        dB_{s_1}\dots dB_{s_q}
,
\]
where $(k_1,\dots,k_q)$ is a multi-index belonging to $\mathcal{A} \eqdef  \bigcup_{q=0}^{\infty}\, 
\mathbb{N}^q$; we also make the convention that $Z_{\emptyset} \eqdef 1$, and we have used the linearity of the Ito (stochastic) integral in conjunction with the above considerations.

\subsection{Simultaneous Approximation of SDEs with Different Initial Conditions using CNOs}
\label{s:Applications__ss_StochProcesses}
Monte Carlo methods allow for the efficient solution to stochastic differential equations (SDEs) with a convergence rate of $\mathcal{O}(1/\sqrt{S})$\footnote{Typically in the $L^2$-sense.} to the true solution, where $S$ is the number of samples, plus a comparable discretization error when resorting to a tamed Euler scheme \cite{JenztenTamedEulre2011}.  It is known that deep learning can provide a suitable alternative to Monte Carlo schemes by learning the SDE's solution map given deterministic initial conditions, for a fixed terminal time, by approximating the solutions to their associated PDEs \cite{Beck2018} given by the Feynman-Kac Theorem.

In this section, we show how a \textit{single} CNO can be used for simultaneously solving SDEs with various noisy initial conditions across different time-horizons, by \textit{simultaneously} approximately learning solve a family of stochastic differential equations with many different stochastic initial conditions and different initial times.  

This section's application shows that the CNO can approximate causal maps with stochastic inputs on arbitrarily long time horizons.  This extends the known guarantees for recurrent neural networks, specifically reservoir computers, which can approximate time-invariant causal maps \cite{Lukas_StochInputReservoir}.

We are given a non-degenerate time grid $(t_n)_{n \in \mathbb{Z}}$ as in Assumption 1, $\beta$ and $\alpha$ in $C([0, \infty) \times \mathbb{R}, \mathbb{R})$ such that there exists $M > 0$ such that for all $t\geq 0$ and all $x_1, x_2 \in \mathbb{R}$, we have 
\begin{align}
% \label{eq:Lipschitzness_SDE}
    |\beta(t, x_1)-\beta(t, x_2)|^2 + |\alpha(t, x_1)-\alpha(t, x_2)|^2 & \leq M^{2} |x_1 - x_2|\label{eq:growth_1}
\\
% \label{eq:growth_SDE}
    |\beta(t, x_1)|^2 + |\alpha(t, x_1)|^2 & \leq M^2 ( 1 + |x_1|^2)\label{eq:growth_2}.
\end{align}
Theorem 8.7 in \cite{DaPrato2008} guarantees that for all $i\in\mathbb{N}_+$, under the growth conditions \eqref{eq:growth_1} and \eqref{eq:growth_2}, 
for $\eta \in L^{2}(\Omega, \mathcal{F}_{t_i}, \mathbb{P})$ there exists a unique $X \in C([t_{i}, t_{i+1}]; L^2(\Omega, \mathcal{F}_{t_{i+1}}, \mathbb{P} ))$ which satisfies  $\mathbb{P}$-a.s.  
\begin{equation}
\label{eq:diffusiondynamics}
    X_{t_{i+1}} = \eta + \int_{t_i}^{t_{i+1}} \alpha(s, X_s)\,ds +  \int_{t_i}^{t_{i+1}} \beta(s, X_s)\,dB_s,
\end{equation}
where we set $X_{t_{i}} = \eta$; in what follows, we will indicate the explicit dependence on $\eta$ in $X_{t_{i+1}}$, i.e. $X_{t_{i+1}}^{\eta}$. Therefore, $\forall i \in \mathbb{N}_+$ the following (non-linear) \emph{solution operator}
\begin{equation}\label{eq:solution_operator}
    \operatorname{SDE-Solve}_{t_i:t_{i+1}}\,:\,L^2(\Omega, \mathcal{F}_{t_{i}}, \mathbb{P}) \rightarrow L^2(\Omega, \mathcal{F}_{t_{i+1}}, \mathbb{P}),\quad \eta \rightarrow X_{t_{i+1}}^{\eta}
\end{equation}
is well defined\footnote{See \cite[Section 8]{DaPrato2008}.}. To see that each of the maps $\operatorname{SDE-Solve}_{t_i:t_{i+1}}$ satisfies the assumptions of our theorems, it is sufficient to note that under \eqref{eq:growth_1} and \eqref{eq:growth_2}, the operator $\operatorname{SDE-Solve}_{t_i:t_{i+1}}$ is Lipschitz and, in view of \cite[Proposition 8.15]{DaPrato2008}, it belongs to the trace-class $C_{1,\text{tr}}^{\lambda}(K, L^2(\Omega, \mathcal{F}_{t_{i+1}}, \mathbb{P}))$ for all compact subsets $K$ of $L^2(\Omega, \mathcal{F}_{t_{i}}, \mathbb{P} )$, since
\begin{equation}\label{eq:estimate}
    \begin{split}
        \|X_{t_{i+1}}^{\hat{\eta}}-X_{t_{i+1}}^{\tilde{\eta}}\|_{L^2(\Omega, \mathcal{F}_{t_{i+1}}, \mathbb{P}; \mathbb{R})} &\leq \sqrt{3} e^{\frac{3}{2}M^2(t_{i+1}-t_{i}+1)(t_{i+1}-t_{i})}
        \|\hat{\eta}-\tilde{\eta}\|_{L^2(\Omega, \mathcal{F}_{t_{i}}, \mathbb{P}; \mathbb{R})}\\
                                                        &\leq \sqrt{3} e^{\frac{3}{2}M^2(\Delta^{+}+1)\Delta^{+}}
                                                        \|\hat{\eta}-\tilde{\eta}\|_{L^2(\Omega, \mathcal{F}_{t_{i}}, \mathbb{P}; \mathbb{R})}.
    \end{split}
\end{equation}
with $\lambda \leq\sqrt{3} e^{\frac{3}{2}M^2(\Delta^{+}+1)\Delta^{+}}$ and $\Delta^{+} \eqdef \sup_{i \in \mathbb{Z}}\Delta t_{i} < \infty$ as in Assumption 1.

We consider the causal map
\begin{equation}\label{eq:solution_adapted_map}
    \begin{split}
     \operatorname{SDE-Solve}\,:\,&\left(\prod_{i \in \mathbb{Z}; t_i<0}\{0\}\right) \times\prod_{i \in \mathbb{Z}; t_i\geq 0}\,L^2(\Omega, \mathcal{F}_{t_{i} }, \mathbb{P}) \rightarrow \left(\prod_{i \in \mathbb{Z}; t_i<0}\{0\}\right)\times\prod_{i \in \mathbb{Z};t_i\geq 0} L^2(\Omega, \mathcal{F}_{t_{i}}, \mathbb{P}),\\
     &\quad\quad\quad\quad\quad\quad\,\,(\eta_{t_i})_{i \in \mathbb{Z}} \mapsto \operatorname{SDE-Solve}\left[(\eta_{t_{i}})_{i \in \mathbb{Z}}\right],
     \end{split}
\end{equation}
\begin{equation}
\label{eq:solution_adapted_map__II}
    \left(\operatorname{SDE-Solve}\left[
    (\eta_{t_i})_{i \in \mathbb{Z}}
    \right]\right)_j = 
    \begin{cases}
        0, \quad \text{if } t_j<0\\
        \operatorname{SDE-Solve}_{t_j:t_{j+1}}(\eta_{t_j})
        % {\color{red}{ = X^{\eta_{t_j}}_{t_{j+1}} }}
        , \quad\text{if } t_j\geq 0,
    \end{cases}
\end{equation}
where each $\operatorname{SDE-Solve}_{t_i:t_{i+1}}(\eta_{t_{i}})$ is defined as in Equation \eqref{eq:solution_operator}.  
The typical example which we have in mind, in the following, are input sequences which are orbits of square-integrable random variables under the an SDE's solution operator; i.e.\
\begin{equation}
\label{eq:orbits}
\eta_{t_{i+1}} = \operatorname{SDE-Solve}_{t_i:t_{i+1}}(\eta_{t_i})
\mbox{ and }
\eta_{t_0} = X,
\end{equation}
for some $X\in L^2(\Omega,\mathcal{F}_0,\mathbb{P})$.  Thus, approximating $\operatorname{SDE-Solve}$ and applying it to any compact subset of the path-space comprised of elements of the form~\eqref{eq:orbits} corresponds to simultaneously solving an SDE for several random initial conditions across arbitrarily time-intervals beginning at several initial times.

By Equation \eqref{eq:estimate}, $\operatorname{SDE-Solve}$ is a \emph{causal map} as in Definition \eqref{def:causal_maps}, since in this case we can simply take $r=0,\alpha=1$, $M=1$, $f_{t_i} =  \operatorname{SDE-Solve}_{t_i:t_{i+1}}$, and $\lambda \leq  \sqrt{3} e^{\frac{3}{2}M^2(\Delta^{+}+1)\Delta^{+}}$ holds for any $i\in \mathbb{N}_+$. Theorem \ref{thm:theorem_universality_causal} guarantees that there exists a CNO which approximates the map in Equation \eqref{eq:solution_adapted_map}, as soon as we confine ourselves on a compact path space.
Let us summarize our findings in 

\begin{corollary}[Causal Universal Approximation of SDEs with Stochastic Dynamics]
\label{cor:Applications__ss_StochProcesses}
Consider the setting of this section and fix the path space
    \[
    \mathcal{X} 
     \eqdef  
    \left(\prod_{i \in \mathbb{Z}; t_i<0}\{0\}\right) \times\prod_{i \in \mathbb{Z}; t_i\geq 0}\,\mathcal{X}_{t_i},
    \]
    where each $\mathcal{X}_{t_i}$ is a compact subset of $L^2(\Omega,\mathcal{F}_{t_i},\mathbb{P})$.  
    Then the operator $\operatorname{SDE-Solve}$ 
    \begin{equation*}
     \operatorname{SDE-Solve}\,:\,\left(\prod_{i \in \mathbb{Z}; t_i<0}\{0\}\right) \times\prod_{i \in \mathbb{Z}; t_i\geq 0}\,\mathcal{X}_{t_i}
     \rightarrow 
     \left(\prod_{i \in \mathbb{Z}; t_i<0}\{0\}\right)\times\prod_{i \in \mathbb{Z};t_i\geq 0} L^2(\Omega, \mathcal{F}_{t_{i}}, \mathbb{P})
\end{equation*}
is $(0, 1, \sqrt{3} e^{\frac{3}{2}M^2(\Delta^{+}+1)\Delta^{+}})$-H\"older.

Given $Q, \delta \in \mathbb{N}_{+}$, an ``encoding error" $\varepsilon_{D}>0$ and an ``approximation error" $\varepsilon_{A}>0$ there exist a multi-index $[d]$, a ``latent code" $z_0 \in \mathbb{R}^{P([d])+Q}$, a linear readout map $L\,:\,\mathbb{R}^{P([d])+Q}\rightarrow\mathbb{R}^{P([d])}$, and a ReLU FFNN  $\hat{h}\,:\,\mathbb{R}^{P([d])+Q}\rightarrow\mathbb{R}^{P([d])+Q}$ such that the sequence of parameters $\theta_{t_i} \in \mathbb{R}^{P([d])}$ defined recursively
\begin{equation*}
    \begin{split}
        \theta_{t_i} \eqdef L(z_{t_i})\\
        z_{t_{i+1}} \eqdef \hat{h}(z_{t_i}),
    \end{split}
\end{equation*}
with $i$ belongs to $[[I]]\cup \{0\}$ provided by the definition of causal maps \footnote{See Definition \ref{def:causal_maps}.}, satisfies to the following uniform estimates:

\begin{equation*}
    \begin{split}
    \max_{i \in [[I]]}\,
    \sup_{X_{\cdot} \in \mathcal{X}}\,\,
        \| 
        \hat{f}_{t_i}(X_{(t_{i-1}, t_i]})-
         \operatorname{SDE-Solve}(X_{\cdot})_{t_i}  \|_{L^2}
             <  \varepsilon_A + \varepsilon_D,
    \end{split}
\end{equation*}
where\footnote{We recall, Definition \ref{def:Neural_Filter}, stating that
$\hat{f}_{t_i} \eqdef I_{B_{t_i}:n_{\varepsilon_D^{out}}} \circ \varphi_{n_{\varepsilon_D^{out}}}\circ \hat{f}_{\theta_{t_i}} \circ \psi_{n_{\varepsilon_D^{out}}}\circ P_{E_{t_i}:n_{\varepsilon_D}^{in}}
$.} 
$\hat{f}_{t_i} \in \mathcal{NF}_{[n_{\varepsilon_D}]}^{(P)ReLU}$.
Moreover, for the hyperparameter $n_{\varepsilon_D}^{in}$ it holds 
\[
n_{\varepsilon_D}^{in} = \inf\left\{n\in\mathbb N_+: \max_{x\in \mathcal{X}} d_E(A_{E:n}(x),x) \leq \frac{\varepsilon_D}{2\lambda}\right\}
\]
where we have set $E\eqdef\Pi_{i\in \mathbb Z}L^2(\Omega,\mathcal{F}_{t_i},\mathbb P)$.
\end{corollary}

\subsection{Discussion - {Corollary~\ref{cor:Applications__ss_StochProcesses}}: Jumps, Path-Dependence, and Accelerated Approximation Rates Under Smoothness}
\label{sec:Discussion}
We briefly discuss some points surrounding Corollary~\ref{cor:Applications__ss_StochProcesses}.  For instance, how the result allows for stochastic discontinuity-type jumps.  We also discuss how the scope of Theorem~\ref{thm:theorem_universality_static} allows for Corollary~\ref{cor:Applications__ss_StochProcesses} to be easily generalized; but we opt not to do that in this manuscript, rather opting for a less technical illustration of our general framework.  

\paragraph{Improved Approximation Rates for SDEs Driven by Smooth Coefficients}
If, in addition to conditions~\eqref{eq:growth_2} and~\eqref{eq:growth_1}, the drift and diffusion coefficients $\alpha$ and $\beta$ are sufficiently differentiable\footnote{The precise conditions are formalized in \cite[Assumption 3.7]{RosestolatoBook2017}.}, then \cite[Theorem 3.9]{RosestolatoBook2017} implies that each of the maps $\operatorname{SDE-Solve}_{t_i:t_{i+1}}$ are $C^k$.  Whence, the operator $\operatorname{SDE-Solve}$ is a smooth causal map of finite virtual memory.  Thus, in this case, Theorem~\ref{thm:theorem_universality_causal} implies improved approximation rates by the CNO model.

\paragraph{Stochastic Discontinuities at Time-Grid Points}
We highlight that the adapted map $\operatorname{SDE-Solve}$ does accommodate jumps but only if those jumps occur on the fixed time-grid points $\{t_i\}_{i\in \mathbb{N}}$.  Such constructions have recently appeared in the rough path literature \cite{Allan2021ArXiV} and the causal/functional It\^{o} calculus literature \cite{Cont2022JMathAnnAppl}.  

In financial applications, the possibility of a stochastic process' to jump at predetermined times (called \textit{stochastic discontinuities} in that context) are an essential ingredient of accurately modeling interest rates; for example, European reference interest rates typically exhibit jumps directly after monetary policy meetings of ECB \cite{Font2020FinStoch}.  
% \end{remark}

\paragraph{Path Dependent Dynamics}
One could consider SDEs driven with path dependant random drift and diffusion coefficients, since all that is needed to apply Theorem~\ref{thm:theorem_universality_causal} is the regularity of the $\operatorname{SDE-Solve}$ operator; which is guaranteed by results such as \cite{DePrato2016AnnProb} or \cite{RosestolatoBook2017}.  However, we instead opted for a simple first presentation, explicitly illustrating the scope of our results in this easier case.

\section{The Benefit of Causal Approximation: Super-Optimal Approximation Rates for Causal Maps}
\label{sec:Comparisons}
We now illustrate the quantitative advantage of causal approximation, i.e.\ using our CNO architecture, when the target function is causal.  For illustrative purposes, we consider the simplest case where all involved spaces are finite-dimensional and Euclidean.  By considering this setting, we can juxtapose our approximation rates derived from Theorem~\ref{thm:theorem_universality_causal} against the best {\color{black} upper-bounds on the approximation} rates for ReLU networks \cite{ZHS2022JMPA} {\color{black} which apply to our class of causal maps}, {\color{black} which match the well-known lower bounds for Lipschitz maps \textit{without the additional causality constraint}~\cite{DVL1993,Grohs2021IEEETransInfoTheory}; however, there are currently no available lower bounds on this causal class.} 

Therefore, when the target function has a causal structure, ``super-optimal uniform approximation rates'' can be achieved only if one encodes that structure into the neural network model; as in the case with the CNO.  Throughout this section, we consider the integer time-grid $\{t_i\}_{i\in \mathbb{Z}}=\{t\}_{t\in \mathbb{Z}}$; which we note satisfies the non-degeneracy condition in Assumption 1.

\subsection{In the Euclidean Case, CNOs are a simple class of RNNs which are universal dynamical systems}
\label{s:Euclidean}

\begin{figure}[H]%[ht]
\centering
\includegraphics[width=0.95\textwidth]{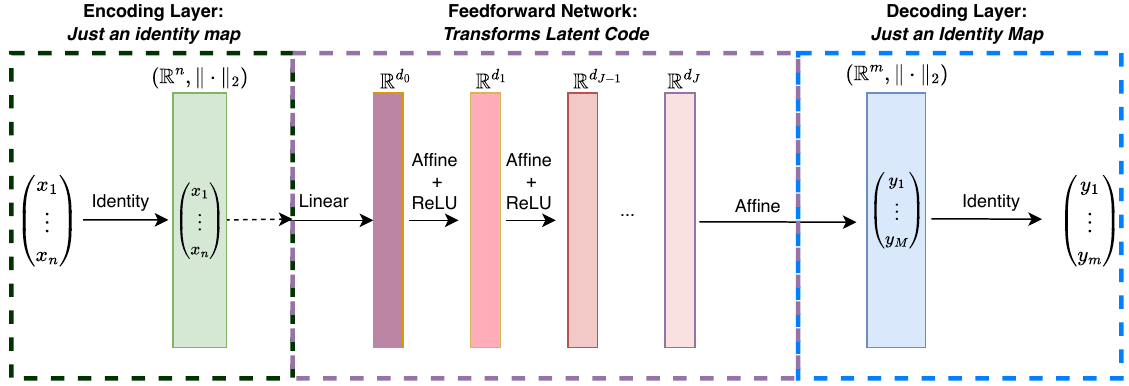}
\caption{\textbf{Neural Filters - Euclidean Spaces}: If the input and output spaces are Euclidean, then the projection and reconstruction layers in Figure~\ref{fig:Neural_Filter_GeneralForm} can be dropped; since they reduce to formal identity maps.  Thus, in this setting a neural filter is a deep ReLU FFNN.}
\label{fig:Neural_Filter_EuclideanForm}
\end{figure}
In \cite{KN1998NN}, the authors investigate the problem of approximating a dynamical system on a Euclidean space by a RNN. In their most general form, RNNs -- sometimes also called ``fully RNN", or fRNNs - are given for times $t>0$ by 
\begin{equation}\label{eq:fRNNs}
    \begin{split}
        y_t & \eqdef  \hat{f}_{\theta_t}(y_{t-1}, x_t),\\
        y_0 & \eqdef  y,
    \end{split}
\end{equation}
where $y_t$ is the state of the system, $x_t$ is an external input, $y$ the initial state, and $\hat{f}_{\theta_t}$ are (possibly deep) FFNNs with a priori no relationship among their parameters $(\theta_t)_{t \in \mathbb{N}_{+}}$. In particular, each FFNNs may have different depth and/or width. However, in practice, restrictions are put on the sequence of networks $(\hat{f}_{\theta_t})_{t \in \mathbb{N}_{+}}$; precisely, it is usually required that they all have the same \textit{complexity}, and each $\theta_{t+1}$ is recursively determined from the pair $(\theta_t, x_t)$. For instance, if it is only assumed that each FFNNs in Equation \eqref{eq:fRNNs} has the same complexity, then the classical result of \cite{SS1995JCSS} shows that one may simulate all Turing Machines by fRNNs with rational weights and biases. Although this result is promising for the expressive power of fRNNs, it is far removed from any practical model since it places absolutely no restriction on how the sequence $(\theta_t)_{t \in \mathbb{N}_{+}}$ is determined. As a consequence, the model in Equation \eqref{eq:fRNNs} is not implementable since it depends on an infinite number of parameters, as there is no relationship between $\theta_t$ and any $\theta_s$ for all past times $s<t$. On the other extreme, a very recent paper \cite{helmut2022metric} prove that a RNN with a single hidden layer and with $\theta_t = \theta_0$, for all $t\in\mathbb{N}_{+}$, can approximate linear time-invariant dynamical systems quantitatively.\\
\indent Still, surprisingly, many questions surrounding the approximation power of more sophisticated but implementable RNNs remain open. For instance, the ability of such RNNs to approximate non-linear
dynamical systems, quantitatively, and the quantitative role of the hidden state space/latent code's dimension are still open problems in the neural network literature. This subsection, addresses these open problems as a simple and direct consequence of Theorem \ref{thm:theorem_universality_causal}.\\
\indent This is because if $E=B=\mathbb{R}^{d}$, (with $\mathbb{R}^{d}$ equipped with the Euclidean distance), then our CNO model defines a very simple RNN. In order to see this, let $(e_i)_{i = 1}^{d}$ be the standard basis of $\mathbb{R}^{d}$, which is trivially a Schauder basis for the latter. Requiring that the \textit{encoding} and the \textit{decoding} dimensions of our CNO model are at least $d$, we have that the latter is given by\footnote{See Theorem \ref{thm:theorem_universality_causal} for the precise notation.}:
\begin{equation}\label{eq:CNO_1}
    \begin{cases}
        y_t \eqdef \hat{f}_{\theta_t}(x_t),\\
        \theta_t \eqdef  L(z_t),\\
        z_{t+1} \eqdef \hat{h}(z_t).
    \end{cases}
\end{equation}
\noindent Moreover, by pre-composing each $\hat{f}_{\theta_t}$ in Equation \eqref{eq:CNO_1} with the following linear projection
\begin{equation*}
    A\,:\,\mathbb{R}^{N} \times \mathbb{R}^{N} \rightarrow \mathbb{R}^{N},\quad (y, x) \rightarrow x,
\end{equation*}
and by noting that $ \hat{f}_{\theta_t} \circ A$ is a FFNN because of the invariance with respect the pre-composition by affine functions, we have that the CNO becomes 
\begin{equation}\label{eq:CNO_2}
    \begin{cases}
        y_t \eqdef \hat{f}_{\theta_t}(y_{t-1}, x_t),\quad y_0 \eqdef y\\
        \theta_t \eqdef  L(z_t),\\
        z_{t+1} \eqdef \hat{h}(z_t),
    \end{cases}
\end{equation}
\noindent 
where with a minor abuse of notation we keep using $\hat{f}_{\theta_t}$ instead of $\hat{f}_{\theta_t} \circ A$.  
By comparing Equations \eqref{eq:fRNNs} and \eqref{eq:CNO_2}, we see that the CNO model is a RNN whose weights and biases do not depend upon the input sequence $(x_t)_{t \in \mathbb{N}_{+}}$, and are determined recursively by the \textit{hypernetwork} $\hat{h}$, as in \cite{HDQICLR2017}. Therefore, our CNO is essentially the classical Elman RNN of \cite{EL1990} with $\hat{f}_{\theta_t}$ and $\hat{h}$ having several, instead of one, hidden layer.\\
\indent We now illustrate the expressive power of the CNO model in Equation \eqref{eq:CNO_2}.  For simplicity, we consider the case of dynamical system defined on a smooth compact sub-manifold $\mathcal{M}$ of $\mathbb{R}^{d}$, possibly with boundary; these types of dynamical systems arise often in physics \cite{DynamicsPhys,HamiltonianOrtega} and are actively studied in the reservoir computing literature \cite{Takens2022}.
We let $(g_{t})_{t \in \mathbb{N}}$ be a sequence of smooth functions from $\mathbb{R}^d$ to itself which fix the manifold $\mathcal{M}$, namely,\ $g_{t}(\mathcal{M}) \subseteq \mathcal{M}$ for every $n \in \mathbb{N}$.  We further require that the family $(g_{t})_{t\in\mathbb{N}}$ has uniformly bounded gradient on $\mathcal{M}$; meaning that for some $\lambda \ge 0$ it holds 
\[
    \sup_{t \in \mathbb{N}}\,\max_{x\in \mathcal{M}}\,
        \|\nabla g_{t}(x)\|
    \le 
    \lambda
    .
\]
NB, this is of-course satisfied by any autonomous dynamical system; namely when $g_{t}=g_0$ for all integers $t$, with $g_0$ smooth.

Then the restriction of each $g_{t}$ to $\mathcal{M}$ defines a dynamical system and we can express the causal structure in the orbit of any initial state $x_0 \in \mathcal{M}$ evolving under $g$ as a smooth causal map\footnote{See Definitions \ref{def:causal_maps}.}. To see this, consider the path space $\mathcal{X}$ whose elements are sequences $x_{\cdot} \in \mathcal{M}^{\mathbb{Z}}$ of the following form 
\begin{equation*}
    x^{(t)} \eqdef 
    \begin{cases}
        g_{t} \circ \ldots \circ g_{0}(x_0) &\mbox{if } t>0\\ 
        x_0 &\mbox{if } t\leq 0.\\
    \end{cases}
\end{equation*}
Now, let $\mathcal{Y}  \eqdef (\mathbb{R}^{d})^{\mathbb{Z}}$.  Then, by construction, we immediately deduce that the operator $f\,:\,\mathcal{X}\rightarrow\mathcal{Y}$ defined as
\begin{equation}
\label{eq:dynamical_system_formulation}
    f(x_{\cdot})_{t} \eqdef 
    \begin{cases}
        g_{t}(x^{(t)}) & \mbox{if } t>0\\
        x_{0} & \mbox{if } t\leq 0,\\
    \end{cases}
\end{equation}
defines a $(0, \infty, \lambda)$-smooth causal map. 

\paragraph{The Quantitative Advantage of the Hypernetwork for Approximating Causal Maps}
\hfill\\
We fix a positive integer $T$ and a $1$-Lipschitz function $G:\mathbb{R}^2\rightarrow [0,1]$.  For any input sequence $(z_t)_{t=1}^T\in [0,1]^T$ define the output sequence $(z^{(t)})_{t=1}^T\in [0,1]^T$ by
\begin{equation}
\label{eq:recursive_structure}
\begin{aligned}
    z^{(t)} & \eqdef  G(z_{t},z^{(t-1)}), &\qquad t=1,\dots,T
,
\end{aligned}
\end{equation}
where we set $z^{(0)}  \eqdef  0$.  We define the map $f:[0,1]^T\rightarrow \mathbb{R}$ as follows
\[
    f(z_1,\dots,z_T) \eqdef z^{(T)} = G(z_{T},z^{(T-1)})
.
\]
Evidently, $f$ is causal, whence, it can be approximated both by the CNO model or by a neural filter (which in this setting reduces to a deep ReLU FFNN). Comparing the approximation rates in either case in Tables~\ref{tab:Model_Complexity_DynamicCase} and~\ref{tab:Model_Complexity} we see that an approximation by a deep ReLU network (i.e.\ a neural filter in this case) requires a depth of $\tilde{O}(\varepsilon_A^{-T/2})$ and a width of $\tilde{O}(\varepsilon_A^{-T/2})$ to approximate $f$ uniformly on $[0,1]^T$ to a maximal error of $\varepsilon_A$.  In contrast, a CNO model only requires a latent state dimension $P([d])+Q = \tilde{O}( \varepsilon_A^{-6} -\log_{1/2}(T-1) )$ with hypernetwork $\hat{h}$ of depth $\tilde{O}(T^{3/2})$ and width $\tilde{O}(\varepsilon_A^{-6} - \log_{1/2}(T-1)T)$ in order to achieve the same uniform approximation of $f$ on $[0,1]^T$ with a maximal error of $\varepsilon_A$.  
\hfill\\
As shown in \cite[Theorem 2.4]{ZHS2022JMPA}, the ReLU feedforward networks achieve the optimal approximation rates when approximating arbitrary Lipschitz functions, then, our rates in Theorem~\ref{thm:theorem_universality_causal} imply that the CNO achieves super-optimal rates when approximating generic Lipschitz functions of the form in~\eqref{eq:recursive_structure}.  Moreover, a direct examination of the above rates shows that the CNO is not cursed by dimensionality when measured in the number of time steps one wishes the uniform approximation to hold for, while deep ReLU FFNNs are.  Consequently, this shows that CNOs are highly advantageous for (causal) sequential learning tasks from the approximation theoretic perspective.

\section{Conclusion}
\label{s:Conclusion}
We presented a first universal approximation theorem which is both causal, quantitative, compatible with infinite-dimensional operator learning, and which is not restricted to ``function spaces'' but is compatible with general ``good'' infinite-dimensional linear metric spaces.  Our main contributions, Theorem~\ref{thm:theorem_universality_static} and Theorem~\ref{thm:theorem_universality_causal}, provided approximation guarantees for any smooth or H\"{o}lder (non-linear) operator between Fr\'{e}chet spaces in the ``static'' or ``causal'' case, where temporal structure is or is not present in the approximation problem, respectively.  

We showed how the CNO model can approximate a variety of solution operators, and infinite dimensional dynamical systems, arising in stochastic analysis.  Moreover, in the Euclidean case, we showed that our neural filter's approximation rates are optimal.  We then showed that, when the target operator being approximated is a dynamical system, then the CNO's approximation rates are super-optimal.  Optimality is quantified in terms of the number of parameters required to approximate any arbitrary map belonging to some broad class as in constructive approximation theory of \cite{DVL1993}.  

We believe the observations made in this work open up avenues for future literature.  As a prime example, we would like to further optimize our CNO for the stochastic filtering problem assuming additional structural conditions.  As future work, we aim to build on these results in the context of robust finance.

\section*{Acknowledgments}
The authors would like to thank Alessio Spagnoletti for his helpful feedback. This research was funded by the NSERC Discovery grant (RGPIN-2023-04482) and was partially supported by the Research Council of Norway via the Toppforsk project Waves and Nonlinear Phenomena (250070).

\appendix

\section{Background material for proofs}
\label{sec:Background_for_proofs}
In an effort to keep the paper as self-contained as possible, this appendix contains any background material required in the derivations of our main results but not required for their formulation.  We cover various properties of deep ReLU neural networks, covering and packing results, and we overview some properties of finite-dimensional ``linear dimension reduction'' techniques in well-behaved Fr\'{e}chet spaces.  
We also include a list of some useful properties of generalized inverses.  

\subsection{Neural Network Regressors}
\label{subsec:DNN_background}
This section contains auxiliary results on neural network approximation, parallelization, and memorization.  
\subsubsection{DNN Approximation for Smooth and H\"{o}lder Functions}\label{subsubsec:DNN_approx_smooth_functions}
\noindent Theorem 1.1 in \cite{JZHS2021SIAM} proves that ReLU FFNNs with width $\mathcal{O}(N\,\log(N))$ and depth $\mathcal{O}(L\,\log(L) + d)$ can approximate a function $f \in C^{s}([0, 1]^{d})$ with a nearly optimal approximation error 
$\mathcal{O}(\|f\|_{C^{s}([0, 1]^{d})}\,N^{- 2s/d}\,L^{-2s/d})$, where the norm $\|\cdot\|_{C^{s}([0,1]^{d})}$ is defined as:
\begin{equation}\label{eq:norm_in_C_s}
    \|f\|_{C^{s}([0, 1]^{d})} \eqdef \max\{\|\partial^{\boldsymbol{\alpha}} f\|_{L^{\infty}([0,1]^{d})}\,:\,|\boldsymbol{\alpha}| \leq s,\,\boldsymbol{\alpha} \in \mathbb{N}^{d}\}, \quad
    f \in C^{s}([0,1]^{d}).
\end{equation}

More precisely, they state and prove the following
\begin{theorem}[{\cite{JZHS2021SIAM}}]\label{thm:approximation_in_C_s}
Given a function $f \in C^{s}([0,1]^{d},\mathbb{R})$ with $s \in \mathbb{N}_{+}$, for any $N, L \in \mathbb{N}_{+}$, there exists a function $\phi$ implemented by a ReLU FFNN with width $C_1\,(N+2)\,\log_2(8N)$ and depth $C_2\,(L+2)\,\log_{2}(4L) + 2 d$ such that
\begin{equation}\label{eq:approximation_in_C_s}
    \|\phi - f\|_{L^{\infty}([0,1]^{d})} \leq C_3\,\|f\|_{C^{s}([0,1]^{d})}\,N^{-2s/d}\,L^{-2s/d},
\end{equation}
where $C_1 = 17 s^{d+1} 3^{d} d$, $C_2 = 18 s^2$ and $C_3 = 85 (s+1)^{d} 8^{s}$.
\end{theorem}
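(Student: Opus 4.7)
The plan is to combine a local Taylor approximation of $f$ on a regular grid partition of $[0,1]^d$ with a ReLU network whose three functional modules---localisation, coefficient look-up, and polynomial evaluation---are composed and depth-synchronised so as to achieve the near-optimal rate via the product-in-$(N,L)$ scaling of the network's VC dimension. This mirrors the philosophy elsewhere in the present paper, where memorisation is leveraged as an exponentially more efficient substitute for plain approximation.

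First, I would partition $[0,1]^d$ into $K^d$ closed sub-cubes of side $1/K$ with centres $\{x_\alpha\}_{\alpha\in[[K]]^d}$, where $K$ is chosen proportional to $N^{2/d} L^{2/d}$, and on each sub-cube approximate $f$ by its degree-$(s{-}1)$ Taylor polynomial
\[
    P_\alpha(x) \eqdef \sum_{|\beta|<s}\, \frac{\partial^\beta f(x_\alpha)}{\beta!}\,(x-x_\alpha)^\beta.
\]
The multivariate Taylor remainder gives the pointwise bound $|f(x) - P_{\alpha(x)}(x)|\le C_{d,s}\,\|f\|_{C^s([0,1]^d)}\,K^{-s}$, which already matches the target rate $N^{-2s/d}L^{-2s/d}$ in the statement of the theorem.

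Second, I would realise the approximant as the composition of three ReLU sub-networks whose widths and depths are controlled independently. The \emph{localisation} sub-network maps $x \in [0,1]^d$ to (an approximation of) the multi-index $\alpha(x)$ of the cube containing $x$, implemented coordinatewise by the ReLU sawtooth/step construction (of Yarotsky type), with depth $\mathcal{O}(\log K)$ and constant width per coordinate. The \emph{coefficient look-up} sub-network returns the $\binom{s+d-1}{d}$ Taylor coefficients $\partial^\beta f(x_\alpha)/\beta!$ associated to the selected centre; this is where the essential compression occurs, since bit-extraction/memorisation techniques allow a network of width $\mathcal{O}(N\log N)$ and depth $\mathcal{O}(L\log L)$ to store and retrieve the $\mathcal{O}(N^2L^2)$ bounded real numbers required, yielding the crucial $N \cdot L$ product scaling in the rate. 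The \emph{polynomial evaluator} then assembles $P_{\alpha(x)}(x)$ using the standard ReLU approximation of $u\mapsto u^2$ to simulate multiplication via polarisation, contributing $\mathcal{O}(s)$ depth together with the additive $2d$ overhead coming from the componentwise shifts $x-x_\alpha$. Composing and depth-synchronising these modules gives the announced bounds $C_1(N+2)\log_2(8N)$ on width and $C_2(L+2)\log_2(4L)+2d$ on depth; the constants $C_1=17s^{d+1}3^d d$, $C_2=18s^2$ and $C_3=85(s+1)^d 8^s$ are recovered by a careful layerwise accounting rather than by any new idea.

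The main obstacle is the \emph{localisation} step: the ReLU step approximations necessarily misclassify a thin strip near cube boundaries, and without care this would induce an $\mathcal{O}(1)$ error in $P_{\alpha(x)}$ on a set of positive measure, because the polynomial attached to a neighbouring cube need not track $f$ at the wrong centre. I would resolve this with the standard shifted-grid trick: run the construction on a small number of rigid translates of the partition, and form a ReLU-implementable convex combination chosen so that for every $x$ at least one translate places $x$ comfortably inside its cube, absorbing the boundary error into the target tolerance. A precision buffer inside the step sub-network then ensures the misclassified set has measure below the prescribed error, and the rest is bookkeeping.
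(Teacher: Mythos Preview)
The paper does not prove this theorem at all: it is quoted verbatim as \cite[Theorem~1.1]{JZHS2021SIAM} in the background appendix (Section~\ref{subsubsec:DNN_approx_smooth_functions}) and used as a black box inside the proof of Theorem~\ref{thm:theorem_universality_static}. There is therefore no ``paper's own proof'' to compare your proposal against.

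That said, your sketch is a faithful outline of the argument in the cited source \cite{JZHS2021SIAM}: the local Taylor expansion on a grid of $K^d$ sub-cubes with $K\asymp (NL)^{2/d}$, the three-module decomposition (localisation via ReLU step/sawtooth constructions, coefficient memorisation via bit-extraction to get the $N^2L^2$ scaling, and monomial assembly via the ReLU squaring network), and the shifted-grid averaging to handle boundary misclassification are precisely the ingredients of the original proof. The only point on which you are slightly imprecise is the handling of the boundary region: in \cite{JZHS2021SIAM} the $L^\infty$ bound is obtained not by a convex combination over translated grids but by the ``trifling region'' argument together with a middle-point rounding step that keeps the approximant constant on the problematic thin strips; the convex-combination device you describe is closer to Yarotsky's earlier construction. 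This does not affect the correctness of the rate, only the bookkeeping behind the explicit constants $C_1,C_2,C_3$.
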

\noindent In particular, note that the previous result does not privilege the width to the depth and vice versa because the exponent for \textit{both} $N$ and $L$ on the right-hand side of Equation \eqref{eq:approximation_in_C_s} is $-2s/d$.\\
\indent     On the other hand, \cite{ZHS2022JMPA}, as a consequence of their main theorem for explicit error characterization, state and prove the following.
\begin{theorem}[{\cite{ZHS2022JMPA}}]\label{thrm:approximation_in_holder}
    Given a H\"older continuous function on $[0,1]^{d}$ of order $\alpha \in (0, 1]$ with H\"older constant $\lambda>0$, i.e., $f \in C^{\lambda}_{\alpha}([0,1]^{d},\mathbb{R})$, then for any $N \in \mathbb{N}_{+}$, $L \in \mathbb{N}_{+}$ and $p \in [1, \infty]$, there exists a function $\varphi$ implemented by a ReLU network with width $C_1\max\{d \lfloor N^{1/d} \rfloor, N+2 \}$ and depth $11 L + C_2$ such that \begin{equation}\label{eq:approximation_in_L_p_holder}
        \|f - \varphi\|_{L^{p}([0,1]^{d})} \leq 131 \lambda \sqrt{d} (N^2 L^2 \log_3(N+2))^{-\alpha/d},
    \end{equation}
    where $C_1 = 16$ and $C_2 = 18$ if $p \in [1, \infty)$; $C_1 = 3^{d+3}$ and $C_2 = 18+2 d$ if $p=\infty$.
\end{theorem}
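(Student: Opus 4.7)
The strategy is the classical ``discretize, memorize, evaluate'' pipeline for ReLU approximation of H\"older functions, in the sharp form developed by Shen--Yang--Zhang. I would partition $[0,1]^d$ into a uniform grid of axis-aligned sub-cubes of side length $1/K$, where $K$ is chosen so that $K^d \asymp N^2 L^2 \log_3(N+2)$ (up to constants depending on $d$). On each sub-cube $Q_{\mathbf{j}}$ indexed by $\mathbf{j}\in\{0,\dots,K-1\}^d$, the H\"older hypothesis gives
\begin{equation*}
|f(x) - f(x_{\mathbf{j}})| \le \lambda \|x-x_{\mathbf{j}}\|_2^{\alpha} \le \lambda (\sqrt{d}/K)^{\alpha},
\end{equation*}
where $x_{\mathbf{j}}$ is the lower-left corner of $Q_{\mathbf{j}}$. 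Plugging the target value of $K$ yields a pointwise error of order $\lambda\sqrt{d}(N^2L^2\log_3(N+2))^{-\alpha/d}$ at the cube centers, matching the bound. The task then reduces to constructing a single ReLU network $\varphi$ of the prescribed width and depth that (i) computes the index $\mathbf{j}(x)$ of the cube containing $x$ and (ii) outputs the stored value $f(x_{\mathbf{j}(x)})$.

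The key technical ingredient is a \emph{point-fitting / bit-extraction} lemma: any list of $M$ real values in $[0,1]$ indexed by $\{0,\dots,M-1\}$ can be exactly interpolated by a ReLU network of width $\mathcal{O}(N)$ and depth $\mathcal{O}(L)$ provided $M \lesssim N^2 L^2 \log_3(N+2)$. I would prove this by (a) encoding the index via a ``digit representation'' network, using the sawtooth/Yarotsky trick that a shallow ReLU subnetwork of width $N$ can produce $\Theta(N)$ linear pieces, and iterating over $L$ layers to get $\Theta(N^2L^2)$ effective pieces; and (b) extracting successive bits of a binary expansion via the identity $\lfloor 2x\rfloor = \sigma(2x)-\sigma(2x-1)-\sigma(2x-2)+\sigma(2x-3)$ (up to normalization), which lets a depth-$\mathcal{O}(L)$ network ``read off'' $\mathcal{O}(\log_3(N+2))$ bits at cost $\mathcal{O}(N)$ width per layer. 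Composing the index-computation with the stored-value lookup produces $\varphi$, and a careful bookkeeping of constants gives exactly $C_1=16$, $C_2=18$ for the $L^p$, $p<\infty$, regime.

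The step $x \mapsto \mathbf{j}(x)$ cannot be exactly implemented by a continuous ReLU network, so I would instead construct a ``step-like'' function $\tilde{\mathbf{j}}(x)$ that agrees with the true index outside a thin boundary strip of width $\delta/K$ around cube faces. For $p \in [1,\infty)$ one absorbs this boundary region into the $L^p$ error by choosing $\delta$ small enough that the set of points where $\tilde{\mathbf{j}}\neq \mathbf{j}$ has Lebesgue measure $o(K^{-d\alpha/d})$; this is cheap and costs only the small universal constants $C_1=16$, $C_2=18$. For $p=\infty$, however, the thin strip must be handled pointwise, not in measure. The remedy is a coordinate-wise ``shift-and-average'' construction: for each of the $3^d$ half-period shifts of the grid, build a step-lookup network whose bad boundary strip is moved to a different location, then take a ReLU-implementable minimum over a carefully chosen subset, so that every point lies in the good region of at least one shift. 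This is the origin of the enlarged $C_1=3^{d+3}$ and $C_2=18+2d$ in the uniform regime.

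The main obstacle will be the bookkeeping for the bit-extraction construction that simultaneously respects the stated widths $C_1\max\{d\lfloor N^{1/d}\rfloor, N+2\}$ and depth $11L+C_2$ with the precise multiplicative constant $131$ in the error. In particular, one must track how many parallel sub-networks of width $\mathcal{O}(N)$ are needed to encode $d$ coordinates (hence the $d\lfloor N^{1/d}\rfloor$ term) versus how many are needed to implement bit-extraction over depth $L$, and how the $\sqrt{d}$ factor arises from the Euclidean diameter of a sub-cube. Once the point-fitting lemma and the boundary-shift lemma are in hand, assembling $\varphi$ and verifying the error bound is a direct but attentive calculation.
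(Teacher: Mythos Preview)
The paper does not prove this statement at all: Theorem~\ref{thrm:approximation_in_holder} is quoted verbatim from \cite{ZHS2022JMPA} as a background result in Appendix~\ref{subsubsec:DNN_approx_smooth_functions}, with no argument given. Your sketch is essentially a high-level outline of the proof strategy actually used in \cite{ZHS2022JMPA} (grid discretization, bit-extraction/point-fitting, and the $3^d$ shift-and-cover trick for the $L^\infty$ case), so there is nothing to compare against in the present paper; if anything, you have supplied more than the paper itself does here.
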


\subsubsection{Efficient parallelization of ReLU neural networks}
\label{subsubsec:Eff_approx_high_dim_functions_with_nn}
\cite{CJR2020IEEE} propose an efficient parallelization of neural networks with different depths for a special class of activation functions, namely the ones that have the so-called $c$-identity requirements. Before giving a formal definition of such activation functions, we remind  some quantities introduced in \cite{CJR2020IEEE}. More precisely, $\mathcal{N}$ denotes the set of neural network skeletons, i.e.,
\begin{equation}\label{eq:neural_network_skeleton}
    \mathcal{N} = \bigcup_{D \in \mathbb{N}}\; \bigcup_{(l_0, \ldots, l_D) \in \mathbb{N}^{D+1}} \;\prod_{k=1}^{D}(\mathbb{R}^{l_k \times l_{k-1}} \times \mathbb{R}^{l_k}),
\end{equation}
where we follow the convention that the empty Cartesian product is the empty set.  
For $\phi \in \mathcal{N}$, the quantity $\mathcal{D}(\phi) = D$ indicates the depth of $\phi$, $l_k^{\phi} = l_k$ the number of neurons in the $k$th layer, $k \in \{0, \ldots, D\}$, and $\mathcal{P}(\phi) = \sum_{k = 1}^{D} l_k (l_{k-1}+1)$ the number of network parameters.\\
\noindent If $\phi \in \mathcal{N}$ is given by $\phi = [(V_1, b_1), \ldots, (V_D, b_D)]$, $\mathcal{A}_{k}^{\phi} \in C(\mathbb{R}^{l_{k-1}}, \mathbb{R}^{l_{k}})$, $k \in \{1, \ldots, D\}$, denotes the affine function $x \rightarrow V_{k} x + b_k$. In addition, $a : \mathbb{R} \rightarrow \mathbb{R}$ indicates a continuous activation function which can be naturally extended to a function from $\mathbb{R}^{d}$ to $\mathbb{R}^{d}$, $d \in \mathbb{N}_{+}$ applying $\alpha$ component-wise. Finally, the $a$-realization of $\phi \in \mathcal{N}$ is the function $\mathcal{R}_{a}^{\phi} \in C(\mathbb{R}^{l_0}, \mathbb{R}^{l_D})$ given by:
\begin{equation}\label{eq:a_realization}
    \mathcal{R}_a^{\phi} = \mathcal{A}_{D}^{\phi} \circ a  \circ  \mathcal{A}_{D-1}^{\phi} \circ \cdots a \circ \mathcal{A}_{1}^{\phi}.
\end{equation}
We give now the following definition (cfr. \cite{CJR2020IEEE}, Definition 4):
\begin{definition}\label{def:c_identity}
A function $a \in C(\mathbb{R}, \mathbb{R})$ fulfills the $c$-identity requirement for a number $c \geq 2$ if there exists $I \in \mathcal{N}$ such that $\mathcal{D}(I) = 2$, $l^{I}_{1} \leq c$, and $\mathcal{R}_{a}^{I} = \text{id}_{\mathbb{R}}$. 
\end{definition}
\noindent For our scopes, we note that the ReLU activation fulfills the 2-identity requirement with $I = [([1\,-1]^{T}, [0\,\,\,0]^{T}), ([1\,-1], 0)]$. In addition, the following proposition hold (cfr. \cite{CJR2020IEEE}, Proposition 5):
\begin{proposition}\label{prop:number_parameters_c_identity}
    Assume that $a \in C(\mathbb{R}, \mathbb{R})$ fulfills the $c$-identity requirement for a number $c \geq 2$ with $I \in \mathcal{N}$. Then, the parallelization $p_I : \bigcup_{n \in \mathbb{N}} \mathcal{N}^{n} \rightarrow \mathcal{N}$ satisfies:
    \begin{equation}\label{eq:bound_parameters}
        \mathcal{P}(p_{I}(\phi_1, \ldots, \phi_n)) \leq \left(\frac{11}{16}\,c^2\,l^2\,n^2-1\right)\sum_{j = 1}^{n} \mathcal{P}(\phi_j)
    \end{equation}
    for all $n \in \mathbb{N}$ and $\phi_1, \ldots, \phi_n \in \mathcal{N}$, where $l = \max_{j \in \{1,\ldots,n\}} \max\{l_0^{\phi_j}, l_{\mathcal{D}(\phi_j)}^{\phi_j}\}$. In particular, $p_{I}(\phi_1, \ldots, \phi_n)$ denotes the parallelization of $\phi_1, \ldots, \phi_n$.
\end{proposition}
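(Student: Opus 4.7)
The plan is to reduce the unequal-depth parallelization to an equal-depth one by padding each $\phi_j$ with copies of the $c$-identity network $I$ until all networks share the common depth $D^{\ast}\eqdef\max_{j}\mathcal{D}(\phi_j)$, and then to assemble the padded networks into a single network whose weight matrices are block-diagonal and whose biases are the concatenations of the individual biases. Since $\mathcal{R}_{a}^{I}=\mathrm{id}_{\mathbb R}$ applied component-wise, padding preserves realizations exactly, so the assembled network realizes $(x_1,\ldots,x_n)\mapsto(\mathcal{R}_{a}^{\phi_1}(x_1),\ldots,\mathcal{R}_{a}^{\phi_n}(x_n))$, which matches the intended semantics of $p_{I}$. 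All the content of the proposition therefore lies in a careful parameter count.

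\textbf{Padding.} For each $j$, I would append $m_j\eqdef D^{\ast}-\mathcal{D}(\phi_j)$ consecutive applications of $I$ acting component-wise on the output of dimension $d_j\eqdef l_{\mathcal{D}(\phi_j)}^{\phi_j}\le l$, merging the first affine layer of each appended copy of $I$ with the preceding output layer via composition of affine maps. Each appended copy contributes exactly one hidden layer of width at most $c d_j\le cl$ and one output layer of width $d_j\le l$, at a cost of at most $2cd_j^{2}+cd_j+d_j\le 3cl^{2}$ new parameters. Writing $\tilde\phi_j$ for the padded network, $\tilde l_k^{(j)}$ for its width at layer $k$, and $L_{k}\eqdef\sum_{j=1}^{n}\tilde l_k^{(j)}$, the block-diagonal parallelization has parameter count
\[
\mathcal{P}(p_{I}(\phi_1,\ldots,\phi_n))=\sum_{k=1}^{D^{\ast}}L_{k}(L_{k-1}+1).
\]

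\textbf{Counting.} I split $\tilde l_k^{(j)}$ into an original contribution (equal to $l_k^{\phi_j}$ whenever $k\le\mathcal{D}(\phi_j)$) and a padded contribution (bounded by $cl$ otherwise), and would split each summand accordingly. The inequalities $L_{k}\le n\max_{j}\tilde l_{k}^{(j)}\le n\bigl(cl+\max_{j}l_{k}^{\phi_j}\bigr)$ give the coarse per-layer bound $L_{k}(L_{k-1}+1)\le n^{2}\bigl(cl+\max_{j}l_{k}^{\phi_j}\bigr)\bigl(cl+\max_{j}l_{k-1}^{\phi_j}+1\bigr)$. Summing over $k$, the purely padded contribution telescopes into $\sum_{j}3cl^{2}m_j\le 3cl^{2}n\,D^{\ast}$, which is absorbed via $\mathcal{P}(\phi_j)\ge 2\mathcal{D}(\phi_j)$ (each layer carries at least one weight and one bias), while the purely original contribution is bounded by $c^{2}l^{2}n^{2}\sum_{j}\mathcal{P}(\phi_j)$ up to cross terms controlled by the elementary inequality $l_{k}^{\phi_j}(l_{k-1}^{\phi_j}+1)\ge\max\{l_{k}^{\phi_j},l_{k-1}^{\phi_j}\}$. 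Assembling these estimates yields a bound of the form $\bigl(Cc^{2}l^{2}n^{2}-1\bigr)\sum_{j}\mathcal{P}(\phi_j)$.

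\textbf{Main obstacle.} The delicate part is sharpening the absolute constant $C$ down to $\tfrac{11}{16}$ and identifying the exact origin of the additive $-1$: a na\"ive appeal to Cauchy--Schwarz or AM--GM yields $C=\mathcal{O}(1)$ but with a constant considerably larger than $\tfrac{11}{16}$. Extracting the claimed bound requires a finer case split over whether the adjacent layer indices $k-1$ and $k$ are each of padded or original type and, in the mixed regime, the observation that the $+1$ bias column attached to any original weight block is already carried by $\mathcal{P}(\phi_j)$ and therefore must not be double-counted in the block-diagonal assembly. This ``absorption'' of one unit of bias mass is precisely what produces the additive $-1$ in \eqref{eq:bound_parameters}, whereas an optimization balancing the padded-padded, mixed, and original-original regimes, together with the worst-case width product $(cl)^{2}$ arising from a padded identity sitting opposite a full-width original layer, is what pins the leading constant to $\tfrac{11}{16}$.
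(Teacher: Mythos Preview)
The paper does not prove this proposition: it is quoted verbatim from \cite{CJR2020IEEE} (Proposition~5 there) and stated without proof in the background appendix. So there is no ``paper's own proof'' to compare against; the relevant benchmark is the argument in \cite{CJR2020IEEE}.

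Your overall architecture---pad each $\phi_j$ with component-wise copies of $I$ up to the common depth $D^{\ast}$, then form the block-diagonal network---is exactly the construction used in \cite{CJR2020IEEE}, and your realization claim is correct. However, the proposal as written is not a proof but a plan, and the plan has a real gap at the very place you flag as the ``main obstacle.'' You obtain only a bound of the form $\bigl(Cc^{2}l^{2}n^{2}-1\bigr)\sum_j\mathcal{P}(\phi_j)$ with an unspecified $C=\mathcal{O}(1)$, and then assert that a ``finer case split'' and an ``optimization'' pin $C$ to $\tfrac{11}{16}$ without actually performing either. The speculative account of the $-1$ (absorption of a single bias column) is not how the constant arises in \cite{CJR2020IEEE}; there the $-1$ and the $\tfrac{11}{16}$ come out together from an explicit layer-by-layer inequality comparing $L_k(L_{k-1}+1)$ to a weighted sum of the $\tilde l_k^{(j)}(\tilde l_{k-1}^{(j)}+1)$, using that each padded layer width is at most $cl$ and each original layer width is at most $l$, combined with the elementary bound $\sum_j a_j b_j \le (\sum_j a_j)(\sum_j b_j)$ and a careful accounting of the cost of the extension step. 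Your coarse estimate $L_k\le n(cl+\max_j l_k^{\phi_j})$ is too lossy to recover $\tfrac{11}{16}$; one needs to keep the sum $\sum_j \tilde l_k^{(j)}$ rather than pass to $n$ times a maximum. Until that computation is carried out, the proposition is not established.
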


\subsubsection{Memory Capacity of Deep ReLU regressor}
\label{subsec:embedding}
\noindent   We here report a very recent lemma\footnote{\cite[Lemma 20]{KDDWP2022}.} appearing in the deep metric embedding paper of \cite{KDDWP2022}; see Lemma 20 in the just cited reference. 

\noindent  For the sake of completeness, we remind that the \emph{aspect-ratio} of the finite metric space $(\mathcal{X}_{N}, \|\cdot\|_2)$ is defined as the ratio of the maximum distance between any two points therein over the minimum separation between any two distinct points, i.e.:
\begin{equation}\label{eq:aspect_ratio_finite}
    \operatorname{aspect}(\mathcal{X}_N, \|\cdot\|_2) \eqdef \frac{\max_{x_i, x_j \in \mathcal{X}_N}\|x_i-x_j\|_2}{\min_{x_i, x_j \in \mathcal{X}_N\\ x_i \neq x_j}\|x_i-x_j\|_2}.
\end{equation}
\noindent We notice that \cite{KLMN2005GFA} introduce the notion of an aspect ratio of a measure space as the ratio of total mass over the minimum mass at any point. The relevance of the aspect ratio to our analysis is that it quantifies the difficulty to memorize a dataset.  This is because finite subset of a Euclidean space with large aspect ratio are logarithmically (in the aspect ratio) more difficult to memorize than subsets with a small aspect ratio.

\begin{lemma}\label{lem:embedding}
Let $n, d, N \in \mathbb{N}_{+}$, let $f\,:\,\mathbb{R}^{n}\rightarrow\mathbb{R}^{d}$ be a function, and consider pair-wise distinct $x_1, \ldots, x_{N} \in \mathbb{R}^{n}$. There exists a deep ReLU networks $\mathcal{NN}\,:\,\mathbb{R}^{n}\rightarrow\mathbb{R}^{d}$ satisfying
$$\mathcal{NN}(x_i) = f(x_i),$$
for every $i = 1, \ldots, N$. Furthermore, the following quantitative ``model complexity estimates" hold
\begin{itemize}
    \item[(\,i\,)] \textbf{Width\,:\,} $\mathcal{NN}$ has width $n(N-1) + \max\{d, 12\}$,
    \item[(\,ii\,)] \textbf{Depth\,:\,} $\mathcal{NN}$ has depth of the order of
    \begin{equation*}
    \hspace{-1.0cm}
        \mathcal{O}\left(N\left( 1+\sqrt{N\log(N)} \left(1 + \frac{\log(2)}{\log(N)} 
        \Bigl[  C_d + \frac{\log\left(N^2 \operatorname{aspect}(\mathcal{X}_N, \|\cdot\|_2)\right)}{\log(2)}\Bigr]%_{+}
        \right)\right)\right),
    \end{equation*}
    where $\mathcal{X}_{N} \eqdef \{x_1, \ldots, x_N\}$.
    \item[(iii)] \textbf{Number of non-zero parameters\,:\,} The number of non-zero parameters in $\mathcal{NN}$ is at most
    \begin{equation*}
    \begin{split}
        \mathcal{O}&\Bigl(N\left(\frac{11}{4}\max\{n,d\}N^2 -1\right)  
        \Bigl(d + \sqrt{N \log(N)} \Bigl( 1+\frac{\log(2)}{\log(N)}  \Bigl[ C_d 
        % \\
        % & 
        + \frac{\log\left(N^2 \operatorname{aspect}(\mathcal{X}_N, \|\cdot\|_2)\right)}{\log(2)}\Bigr]%_{+}
        \Bigr)\left(\max\{d,12\}\left(\max\{d,12\}+1\right)\right)\Bigr)\Bigr).
    \end{split}
    \end{equation*}
    The  ``dimensional constant" $C_d$ is defined by $$C_d  \eqdef  \frac{2 \log(5 \sqrt{2\pi}) + 3 \log(d)-\log(d+1)}{2 \log(2)}.$$
\end{itemize}
\end{lemma}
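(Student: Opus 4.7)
The strategy follows the now-standard blueprint for memorization by deep ReLU networks, refined to track the exact dependence on the aspect ratio of the dataset $\mathcal{X}_N = \{x_1,\dots,x_N\}$. At a high level, I would separate the construction into a projection stage that reduces the problem from $\mathbb{R}^n$ to $\mathbb{R}$, a central bit-extraction module which stores all target values $f(x_i)$ in the weights of a narrow but deep subnetwork, and a linear decoding stage producing the $d$-dimensional output.

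First, I would build a ReLU affine preprocessor $L:\mathbb{R}^n\to\mathbb{R}$ that is injective on $\mathcal{X}_N$ and whose image $\{y_i\eqdef L(x_i)\}_{i=1}^N$ has the same aspect ratio (up to constants) as $\mathcal{X}_N$. A generic linear form separates $N$ distinct points in $\mathbb{R}^n$; realizing such a form while also routing the raw coordinates forward for a later sanity pass requires $n(N-1)$ neurons in the initial hidden layer. This is precisely the contribution $n(N-1)$ appearing in item~(i). I would order $y_1<y_2<\dots<y_N$ after relabelling.

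Second, and this is the core of the argument, I would invoke an efficient bit-extraction memorization to interpolate the one-dimensional data $\{(y_i,f(x_i))\}_{i=1}^N$. The required precision $p$ needed to robustly distinguish the $y_i$ is driven by the aspect ratio: one needs $p\gtrsim \log_2(N^2\,\operatorname{aspect}(\mathcal{X}_N,\|\cdot\|_2))$ quantization levels, which is exactly where the additive $\log_2(N^2\operatorname{aspect}(\mathcal{X}_N,\|\cdot\|_2))$ term enters the depth bound. For the actual storage/readout, a ``two-level'' bit-extraction scheme (roughly $\sqrt{N}$ chunks of length $\sqrt{N\log N}$ bits each) encodes the index-to-label table in $O(1)$-width ReLU blocks of total depth $O(\sqrt{N\log N})$; iterating the index lookup across all $N$ data points then multiplies this by $N$, producing the $N(1+\sqrt{N\log N})$ scaling stated. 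The dimensional constant $C_d$ absorbs a Stirling-type estimate for the number of bits needed to address one of $N$ targets in $\mathbb{R}^d$ robustly.

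Third, I would assemble the pieces using parallelization. The bit-extraction module outputs a scalar codeword which is decoded by a linear layer of width $\max\{d,12\}$, where the $12$ accounts for the auxiliary identity/comparison gadgets used by the bit-extraction circuit and $d$ is the target dimension. To count parameters, I would apply Proposition~\ref{prop:number_parameters_c_identity} to the parallelization of the projection, the bit-extraction stack, and the output decoding, noting that the ReLU satisfies the $2$-identity requirement. The quadratic overhead $\frac{11}{4}\max\{n,d\}N^2$ in item~(iii) is exactly the penalty incurred when parallelizing the $N$ individual ``index $\to$ label'' lookups that jointly realize memorization, multiplied by the per-stage width-squared parameter count.

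The main obstacle is the bit-extraction module: a naive nearest-neighbour interpolation gives a network of depth linear in $N$, whereas the claimed bound is $\widetilde O(N^{3/2})$ in depth with an additive aspect-ratio log term. Achieving this requires the two-level decomposition of the index bits together with a careful argument that the ReLU gates used to read individual bits remain stable under the precision budget $p$, so that the quantization error never exceeds the minimum pairwise separation of the $y_i$. Matching the exact constants $\frac{11}{4}$, $\max\{d,12\}$, and the constant $C_d$ in the stated form is then a bookkeeping exercise combining the width of each bit-extraction block with the $c$-identity parallelization of Proposition~\ref{prop:number_parameters_c_identity}.
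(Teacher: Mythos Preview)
The paper does not prove this lemma: it is quoted verbatim as background material from \cite{KDDWP2022} (specifically, Lemma~20 there), with no argument given in the present manuscript. So there is no ``paper's own proof'' to compare against; the authors simply import the result as a black box and use it downstream in Lemma~\ref{lemma:deterministic_weaving}.

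That said, your sketch is broadly in the spirit of how such memorization results are obtained in the cited work: a linear projection to one dimension that separates the $N$ points, followed by a bit-extraction/quantization module whose depth scales like $\sqrt{N\log N}$ with an additive $\log$-aspect-ratio term controlling the precision, and finally a decoding layer. One caution: some of your accounting reads like reverse-engineering the stated bounds rather than deriving them. For instance, attributing the $n(N-1)$ width term to ``routing the raw coordinates forward for a later sanity pass'' and the factor $\tfrac{11}{4}\max\{n,d\}N^2$ to parallelizing ``$N$ individual index-to-label lookups'' is plausible-sounding but not obviously the actual mechanism; in the source, the $\tfrac{11}{4}$ comes straight from the constant $\tfrac{11}{16}c^2$ in Proposition~\ref{prop:number_parameters_c_identity} with $c=2$, and the $n(N-1)$ arises from a specific separating-hyperplane construction rather than a generic ``routing'' argument. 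If you intend to supply a self-contained proof, you would need to consult \cite{KDDWP2022} directly to pin down these constants; the present paper gives you nothing to check your sketch against.
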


\subsection{Covering and packing numbers}\label{subsec:covering_and_packing}
% We remind here the concept of covering and packing; we refer to{\color{black}~\cite[page 147]{VanderVaart2023v2}}.   
In what follows, $\Theta$ will always have at least two points.  We recall the basic definitions of these objects here, and we refer the reader to, e.g.~\cite[Section 2.2.2]{VanderVaart2023v2}, for more details and relations between them.
% \luca{No! We are gonna use Dudley's book page 8, as serious people would do}
\begin{definition}[$\varepsilon$-covering]
    Let $(V, \|\cdot\|)$ be a normed space, and $\Theta \subset V$. A subset $F\subset V$ is an $\varepsilon$-covering (or $\varepsilon$-net) of $\Theta$ if for any $\theta \in \Theta$ there exists $f\in F$ such that $\|\theta - f\| \leq \varepsilon$.
    \end{definition}
    \begin{definition}[$\varepsilon$-packing]
    Let $(V, \|\cdot\|)$ be a normed space, and $\Theta \subset V$ a subset. $F\subset \Theta$ is an $\varepsilon$-packing of $\Theta$ if $\inf_{x,y\in F,\,x \neq y}\|x - y \| > \varepsilon$ (notice the inequality is strict).
\end{definition}
\noindent Both of these definitions define the notion of packing and covering.
\begin{definition}[Covering number] $N(\Theta, \|\cdot\|, \varepsilon) \eqdef \inf\{ \#(F)\,:\, F \text{ is an-}\varepsilon \text{ covering for }\,\Theta \}.$
\end{definition}
\begin{definition}[Packing number] 
$
            M(\Theta, \|\cdot\|, \varepsilon) 
    \eqdef 
        \sup\{ \#(F)\,:\, F\text{ is an-}\varepsilon \text{ packing for }\,\Theta\}.
$  
\end{definition}
{\color{black} We note that, $N(\Theta, \|\cdot\|, \varepsilon) \le M(\Theta, \|\cdot\|, \varepsilon) \le N(\Theta, \|\cdot\|, \varepsilon/2)$; see e.g.~\cite[page 147]{VanderVaart2023v2}.}

\subsection{Bounded Approximation Property in Fr\'{e}chet spaces with Schauder basises}\label{subsec:BAP}
\indent We now remind the following important definition (cfr.~\cite{BONET2020WP} Definition 1.6) and proposition (cfr.~\cite{BONET2020WP} Proposition 1.16 (2)).
\begin{definition}[Bounded Approximation property]\label{def:bounded_approximation_property}
    A locally convex space $E$ has the bounded approximation property (BAP, henceforth) if there exists an equi-continuous net $(A_{j})_{j \in I} \subset L(E)$, with $\text{dim}(A_{j}(E)) < \infty$ for every $j \in E$ and $\lim_{j \in I} A_{j}(x) = x$ for every $x \in E$. In other words, the net $(A_{j})_{j \in I}$ converges to the identity for the topology of point-wise or simple convergence. In all the previous expressions, $I$ denotes a generic directed indexing set.
\end{definition}

\begin{proposition}\label{prop:proposition_BONET_116}
    If $F$ is a barreled locally convex space with a Schauder basis, then $F$ has the BAP.
\end{proposition}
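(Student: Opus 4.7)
The plan is to exhibit an explicit equi-continuous net of finite-rank operators converging pointwise to the identity, built directly from the Schauder basis $(f_k)_{k\in\N_+}$ of $F$, and then invoke barreledness to upgrade pointwise convergence to the equi-continuity required by Definition~\ref{def:bounded_approximation_property}.

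First, let $(\beta_k^F)_{k \in \N_+} \subset F'$ denote the coordinate functionals associated with the basis, so that $x = \sum_{k=1}^\infty \langle \beta_k^F, x \rangle f_k$ for each $x \in F$. Define, for every $n \in \N_+$, the partial sum projection
\[
A_n : F \to F, \qquad A_n(x) \eqdef \sum_{k=1}^n \langle \beta_k^F, x \rangle\, f_k.
\]
By construction, $A_n$ is linear and its image is contained in $\operatorname{span}\{f_1,\dots,f_n\}$, so $\dim(A_n(F)) \leq n < \infty$. Continuity of each $A_n$ follows from continuity of the coordinate functionals $\beta_k^F$ (which, in the barreled setting, is the standard consequence of the closed graph theorem applied to the basis representation; in the Fr\'echet case this was already recorded in Subsection~\ref{subsec:Frechet} around Equation~\eqref{eq: coordinates function}). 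Further, the defining property of a Schauder basis is precisely that $A_n(x) \to x$ in $F$ for every $x \in F$, yielding the pointwise convergence to the identity required in Definition~\ref{def:bounded_approximation_property}.

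It remains to verify equi-continuity of the family $\{A_n : n \in \N_+\}$, and this is the only genuinely non-trivial step: it is where the barreled hypothesis enters. Since each sequence $(A_n(x))_{n\in\N_+}$ converges in $F$, it is bounded in $F$; therefore $\{A_n : n \in \N_+\}$ is a pointwise bounded family of continuous linear operators on the barreled space $F$. The Banach--Steinhaus theorem for barreled locally convex spaces (e.g., \cite{S1971}, Chapter III) then asserts that every such pointwise bounded family is automatically equi-continuous. Taking the directed set $I=\N_+$ with its natural order, we conclude that $(A_n)_{n\in\N_+}$ is an equi-continuous net of finite-rank continuous linear operators converging pointwise to the identity on $F$, so $F$ has the BAP.

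I expect the main (indeed only) obstacle to be ensuring that the Banach--Steinhaus theorem is applicable in the precise form needed, which hinges on being careful that the $A_n$ are genuinely continuous before invoking uniform boundedness; everything else is a bookkeeping check against Definition~\ref{def:bounded_approximation_property}.
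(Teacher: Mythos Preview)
The paper does not supply its own proof of Proposition~\ref{prop:proposition_BONET_116}; it merely quotes the result from \cite{BONET2020WP}, Proposition~1.16~(2). Your argument is the standard one and is correct: the partial-sum projections built from the Schauder basis form a sequence of finite-rank continuous operators that converge pointwise to the identity, and the Banach--Steinhaus theorem on a barreled space upgrades pointwise boundedness to equi-continuity. This is exactly how the result is proved in the cited reference as well.

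One small remark on the point you already flagged: continuity of the coordinate functionals $\beta_k^F$ (and hence of each $A_n$) is what makes Banach--Steinhaus applicable, and in full generality for barreled spaces this itself requires an argument (typically via the closed graph theorem or an auxiliary-topology argument). You correctly note that in the Fr\'echet setting the paper has already recorded this fact around Equation~\eqref{eq: coordinates function}, and since the paper only ever invokes the proposition for Fr\'echet spaces (see the sentence immediately following Proposition~\ref{prop:proposition_BONET_116}), this suffices for all subsequent uses. If you wanted the argument to stand on its own for arbitrary barreled spaces, you would need to say a word about why the $\beta_k^F$ are continuous there too, but that lies outside what the paper actually needs.
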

\noindent Since every Fr\'echet space $F$ is barreled\footnote{See \cite[Theorem 4.5]{O2014}.}, then $F$ will enjoy the BAP as soon as it admits a Schauder basis. We also have the following:\footnote{All the authors warmly thank Prof.~Jos\'e Bonet for providing us a precise reference on the following fact.} if $(A_{j})_{j \in \mathbb{N}}$ is a sequence of continuous linear operators from $E$ onto itself such that $A_{0}(x) \eqdef \lim_{n \rightarrow \infty} A_j(x)$ exists for every $x \in E$, then $(A_{j})_{j \in \mathbb{N}}$ is equicontinuous by the Banach-Steinhaus\footnote{See, e.g., \cite{K1983}, Result 39.1 Page 141).} theorem for Fr\'echet spaces, $A_0$ is a continuous linear operator, and the sequence $(A_{j})_{j \in \mathbb{N}}$ converges to $A_0$ uniformly on the compact subsets of $E$.\\
\indent Also, we have the following proposition regarding finite-dimensional topological vector spaces:
\begin{proposition}\label{prop: unique vst}
    A finite-dimensional vector space $F$ can have just one vector space topology up to homeomorphism.
\end{proposition}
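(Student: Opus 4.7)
The plan is to show that every finite-dimensional Hausdorff topological vector space of dimension $n$ is linearly homeomorphic to $(\mathbb{R}^n,\|\cdot\|_2)$, from which the proposition follows by transitivity of homeomorphism. (I am reading the statement with its customary tacit Hausdorff assumption, since otherwise the indiscrete topology already gives a counterexample.) Fix any algebraic basis $\{e_1,\dots,e_n\}$ of $F$ and consider the linear bijection
\[
T : \mathbb{R}^n \to F, \qquad T(x) = \sum_{i=1}^n x_i\, e_i.
\]
The task reduces to proving that both $T$ and $T^{-1}$ are continuous.

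Continuity of $T$ is straightforward: since addition and scalar multiplication are jointly continuous in any TVS, $T$ is a finite combination of such operations applied to the fixed vectors $e_i$, hence continuous. For continuity of $T^{-1}$, equivalently openness of $T$ at $0$, my plan is to exploit compactness of the Euclidean unit sphere $S^{n-1} \subset \mathbb{R}^n$. Its image $K := T(S^{n-1})$ is compact in $F$, and by linear injectivity $0 \notin K$. Using Hausdorffness together with the standard fact that every neighborhood of $0$ in a TVS contains a balanced open neighborhood, I would produce a balanced open neighborhood $V$ of $0$ in $F$ disjoint from $K$. Since $T$ is linear and $V$ is balanced, $W := T^{-1}(V)$ is a balanced open neighborhood of $0$ in $\mathbb{R}^n$ which avoids $S^{n-1}$; by balance this forces $W \subseteq B(0,1)$, the open Euclidean unit ball. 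Consequently $T(B(0,1)) \supseteq V$, and by homogeneity $T(B(0,r)) \supseteq rV$ for every $r>0$, so $T$ is open at $0$ and hence everywhere by translation-invariance.

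The main technical obstacle is the separation step: producing the balanced open neighborhood $V$ of $0$ avoiding $K$. This is where the Hausdorff hypothesis is used crucially, combined with the existence of a balanced neighborhood base at the origin in any TVS and a finite-covering argument on the compact set $K$. Concretely, for each $k \in K$ Hausdorffness supplies disjoint open neighborhoods of $k$ and of $0$; extracting a finite subcover of $K$ and intersecting the corresponding neighborhoods of $0$ gives an open neighborhood of $0$ missing $K$, which is then shrunk to a balanced one. The remaining steps---linearity and homogeneity of $T$, compactness of $S^{n-1}$, and translation-invariance of the TVS topology on $F$---are routine.
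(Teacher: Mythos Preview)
Your argument is the classical proof of this result and is correct, including your explicit acknowledgment of the tacit Hausdorff hypothesis. The paper, however, does not supply its own proof: Proposition~\ref{prop: unique vst} is simply stated as a known fact in the background section (Subsection~\ref{subsec:BAP}) and immediately followed by a remark, with no derivation given. So there is nothing to compare against beyond noting that your write-up provides exactly the standard textbook argument (as in, e.g., Rudin's \emph{Functional Analysis} or Schaefer~\cite{S1971}) that the authors presumably had in mind when citing this result without proof.
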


\begin{remark}\label{rmk: eli eli lama sabachthani}
We observe the following characterization for an equi-continuous family $H\subset L(E,F)$, with $E,F$ Fr\'echet spaces.
\begin{itemize}
    \item $H\subset L(E,F)$ is an equi-continuous family if and only if
    \item for any $V\subset F$ open neighborhood of the origin, $\cap_{T\in H}T^{-1}(V)$ is an open neighborhood of the origin
    (\cite{BONET2020WP} page 1), if and only if
    \item for any $V\subset F$ open neighborhood of the origin, there exists $U\subset E$ open neighborhood of the origin such that $\cup_{T\in H}T(U)\subset V$.
\end{itemize}
In this last case, we call the family $H$ uniformly equi-continuous (see \cite{K1983}, page 169).

\end{remark}

\section{Proofs}\label{app:proofs}
\subsection{Proof of Lemma \ref{lem:lemma_Ck_stability}}\label{app:proof_of_lemma_Ck_stability}
\begin{proof}
By assumption, $f:E \rightarrow B$ is $C^{k}$-$\textrm{Dir}$. This means that
\begin{equation*}
    D^{k} f : E \times E^{k} \rightarrow B,\quad (x, h_1, \ldots, h_k) \rightarrow D^{k} f(x)\{h_1, \ldots, h_k\}
\end{equation*}
is continuous, jointly as a function on the product space. Moreover, an arbitrary linear and continuous operator $T: E \rightarrow B$ between two Fr\'echet spaces is trivially $C^{k}$-$\textrm{Dir}$, for any $k$. By implication, $\tilde{I}$ and $\tilde{P}$ are $C^{k}$-$\textrm{Dir}$. By Theorem 3.6.4 in \cite{H1982AMS} (chain rule), $\tilde{P} \circ f \circ \tilde{I}$ is $C^{k}$-$\textrm{Dir}$. In other words,
\begin{equation*}
    D^k (\tilde{P} \circ f \circ \tilde{I}): \mathbb R^n \times (\mathbb R^n)^k \to \mathbb R^m,\quad (x,h_1,\dots ,h_k)\mapsto D^k(\tilde{P} \circ f \circ \tilde{I})(x)\{h_1,\dots, h_k\}
\end{equation*}
is jointly continuous in the product space. To conclude the proof, it is sufficient to choose as directions $\{h_1, \ldots, h_k\}$ in the previous expression the following ones: $h_1 = e_{j_1}, \ldots, h_k = e_{j_k}$, being $\{e_1, \ldots, e_n\}$ the canonical basis of $\mathbb{R}^{n}$. In this case, we obtain:
\begin{equation*}
    D^{k}(\tilde{P} \circ f \circ \tilde{I})(x)\{h_1, \ldots, h_k\} = \partial_{j_1, \ldots, j_k} (\tilde{P} \circ f \circ \tilde{I})(x),
\end{equation*}
which is, as a function of $x$ only, continuous. Thus, we see that all the partial derivatives of order $k$ of $(\tilde{P} \circ f \circ \tilde{I})$ are continuous on $\mathbb{R}^n$, and so $(\tilde{P} \circ f \circ \tilde{I})$ is $C^{k}$ in the usual sense. Namely, $f$ is $C^{k}$ stable.
\end{proof}

\noindent Before proceeding, we state and prove the following Lemma.
\begin{lemma}\label{lem:aux_lemma_2}
Let $(X, d)$ and $(Y, \varrho)$ be two metric spaces and let $\mathcal{F} \subset C(X, Y)$ be a family of maps from $X$ to $Y$ such that $\forall \varepsilon > 0$ $\exists \delta>0\,:\,d(x, x') \leq \delta$, then $\varrho(f(x), f(x')) \leq \varepsilon$, $f \in \mathcal{F}$. Then, the family $\mathcal{F}$ has a common modulus of continuity.
\end{lemma}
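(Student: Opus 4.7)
The plan is to build the common modulus explicitly as the "worst-case" oscillation across the family. Concretely, I would define
\[
\omega(t) \eqdef \sup\bigl\{\varrho(f(x), f(x'))\,:\, f \in \mathcal{F},\, x, x' \in X,\, d(x, x') \leq t\bigr\}, \quad t \geq 0,
\]
allowing the value $+\infty$ should the supremum fail to be finite. The claim is that $\omega$ is the sought-after common modulus, so the proof reduces to verifying the three defining properties: $\omega$ is non-decreasing, every $f \in \mathcal{F}$ satisfies $\varrho(f(x), f(x')) \leq \omega(d(x, x'))$, and $\omega(t) \to 0$ as $t \to 0^{+}$.

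The first two properties are immediate from the definition: enlarging $t$ enlarges the set over which the supremum is taken, and the pointwise bound holds by construction. The only content lies in the third property. Fix $\varepsilon > 0$. By the uniform equicontinuity hypothesis, there exists $\delta = \delta(\varepsilon) > 0$ such that $d(x, x') \leq \delta$ forces $\varrho(f(x), f(x')) \leq \varepsilon$ for every $f \in \mathcal{F}$ simultaneously. Taking the supremum over $f$ and over pairs $(x, x')$ with $d(x, x') \leq \delta$ gives $\omega(\delta) \leq \varepsilon$, and then monotonicity yields $\omega(t) \leq \varepsilon$ for every $t \in [0, \delta]$. Since $\varepsilon$ was arbitrary, $\lim_{t \to 0^{+}} \omega(t) = 0$, as required.

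The main (and essentially only) subtle point is that $\omega$ may take the value $+\infty$ for large $t$, for instance if $(X, d)$ has infinite diameter and the functions in $\mathcal{F}$ are unbounded. This is harmless for the notion of a modulus of continuity, because what matters is the behaviour near $0$; if a finite-valued modulus is preferred, one can truncate $\omega$ above at any convenient level, or replace it by its least concave majorant on any interval $[0, t_0]$ where it is finite, neither of which affects vanishing at zero. No further obstacle is expected.
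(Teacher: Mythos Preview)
Your proof is correct and follows essentially the same approach as the paper: both define the identical quantity $\omega(t) = \sup\{\varrho(f(x),f(x')) : f \in \mathcal{F},\, d(x,x') \le t\}$ and verify monotonicity, the pointwise bound, and vanishing at zero. Your direct argument for $\omega(t) \to 0$ (take $\delta$ from the hypothesis, conclude $\omega(\delta) \le \varepsilon$) is in fact cleaner than the paper's, which establishes the same fact via a somewhat roundabout contradiction argument.
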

\begin{proof}
    Le $\omega\,:\,[0, \infty) \rightarrow [0, \infty]$ be defined as:
    \begin{equation*}
        \omega(\delta) \eqdef \sup\{\varrho(f(x), f(x'))\,:\,d(x, x') \leq \delta,\,f \in \mathcal{F}\}.   
    \end{equation*}
\noindent   It holds that: (\,i\,) $\omega(0) = 0$; (\,ii\,) $\omega(\delta) \in [0, +\infty]$, $\delta>0$, but $\omega(\delta) < \infty$ in a neighborhood of $0$; (\,iii\,) $\omega$ is non decreasing; (\,iv\,) continuity at $0$\,:\, it holds that $\lim_{\delta \rightarrow 0^{+}} \omega(\delta) = \inf_{\delta > 0} \omega(\delta)  \eqdef  \ell \in [0, +\infty)$. In order to prove the statement, we have to prove that $\ell=0$. Assume by contradiction that $\ell>0$ and let $(\delta_n)_{n \in \mathbb{N}}$ a decreasing sequence to zero such that $\omega(\delta_n)$ converges toward $\ell$ from above. By definition of $\sup$, $\exists\,x_n, x_n' \in X\,:\,d(x_n, x_n') \leq \delta_n$ and $f_n \in \mathcal{F}\,:\,\varrho(f_n(x), f_n(x_n'))>\ell/2$, $n \in \mathbb{N}$. Now, set $\varepsilon = \ell/4$ in the definition of uniform continuity and choose $\delta > 0$ accordingly, i.e.,
    \begin{equation*}
        d(x, x') \leq \delta \Rightarrow \varrho(f(x), f(x')) \leq \ell/4,\quad f \in \mathcal{F}.
    \end{equation*}
Now, pick a $\delta_{n_0} < \delta$. Because $d(x_{n_0}, x_{n_0}') \leq \delta_{n_0} < \delta$, we have that the following inequality holds  $\varrho(f_{n_0}(x_{n_0}), f_{n_0}(x_{n_0}')) \leq \ell/4$, which is a contradiction. Finally, given $z, z' \in X$, $z \neq z'$, by definition it holds that:
    \begin{equation*}
        \varrho(f(x), f(x')) \leq \omega(d(z, z')),\,\,\text{for any}\,\,x, x'\,:\,d(x, x') \leq d(z, z'),\,\,f \in \mathcal{F}.
    \end{equation*}
In particular it holds for $x = z$ and $x' = z'$, i.e. $\varrho(f(z), f(z')) \leq \omega(d(z, z')),\,f\in\mathcal{F}$. Notice that if $z = z'$, than the statement is trivial. 
\end{proof}

\begin{remark}\label{rmk: eli eli lama sabachthani 2}
We observe that, in view of Remark \ref{rmk: eli eli lama sabachthani} and the fact that the metric of a Fr\'echet space is translation-invariant, an equi-continuous family $H\subset L(E,F)$, with $E,F$ Fr\'echet spaces, satisfies the assumption of Lemma \ref{lem:aux_lemma_2}.

\end{remark}

\subsection{Proof of Theorem~\ref{thm:theorem_universality_static}}\label{app:proof_of_theorem_universality_static}

{The proof of Theorem~\ref{thm:theorem_universality_static} proceeds in three main steps. First, the target nonlinear operator is replaced by a finite-dimensional surrogate that preserves its regularity properties—precisely, uniform continuity and a prescribed degree of smoothness. This finite-dimensional surrogate is then approximated using a (P)ReLU MLP. Finally, an infinite-dimensional approximator—our neural filter—is constructed by projecting any infinite-dimensional input onto a finite-dimensional subspace, passing the result through the (P)ReLU MLP, and interpreting the MLP’s outputs as coefficients in a Schauder basis, which are then reassembled into an infinite-dimensional prediction. Tracking and controlling the approximation errors introduced at each step completes the proof.}

\begin{proof}
\noindent In order to outline the ideas behind Theorem~\ref{thm:theorem_universality_static}, we draw the diagram chase in Figure~\ref{fig:tikz_static_proof_summary}. Moreover, in order not to burden the notations, we will use the following abbreviations for any ``encoding error" $\varepsilon_{D}$: $n^{in} \eqdef n^{in}_{\varepsilon_D}$ and $n^{out} \eqdef n^{out}_{\varepsilon_D}$. In what follows, we detail the proof for the case that\footnote{See Definition \ref{def:Ck_traceclass}.} $f \in C^{k, \lambda}_{\operatorname{tr}}(K,B)$. The case where $f$ belongs to $C_{\alpha,\text{tr}}^{\lambda}(K, B)$ will be treated at the end of the \emph{Proof} for the sake of clarity, and we will highlight the main differences with respect to the $C^{k, \lambda}_{\operatorname{tr}}(K,B)$ case.

\begin{figure}[!h]%[H]
    \centering
\begin{tikzcd}
&K \arrow{d}{\iota} \arrow[swap]{dl}{I_{E:n^{in}}\circ P_{E:n^{in}}} \arrow{dr}{f} \\
I_{E:n^{in}}\circ P_{E:n^{in}}(K)
\arrow[hookrightarrow]{r}{\iota}
\arrow{d}{P_{E:n^{in}}}
& E \arrow[swap]{d}{P_{E:n^{in}}} \arrow{r}{F} & B \\%
P_{E:n^{in}}(K) \arrow{r}{\iota} \arrow{d}{\psi} &
\big(R^{n^{in}}, d_{E:n^{in}}\big)
\arrow[swap]{d}{\psi}
\arrow{r}{\phi \circ \hat{f}_{\theta}\circ \psi}
& 
\big(R^{n^{out}}, d_{B:n^{out}}\big)
\arrow{u}{I_{B:n^{out}}}
\\
P_{E:n^{out}}(K) \arrow{r}{\iota} &
\big(R^{n^{out}}, \|\cdot\|_2\big)
\arrow{r}{F_{\varepsilon_D}}
& 
\big(R^{n^{out}}, \|\cdot\|_2\big)
\arrow{u}{\phi}
\end{tikzcd}
\caption{Outline of Theorem~\ref{thm:theorem_universality_static}'s proof: The diagram chase.}
\label{fig:tikz_static_proof_summary}
\end{figure}

By assumption, $f:K \rightarrow B$ belongs to the trace-class $C^{k, \lambda}_{\operatorname{tr}}(K,B)$. Therefore, there exists a $\lambda$-Lipschitz $C^{k}$-stable (non-linear) operator $F:E \rightarrow B$ such that $F(x) = f(x)$ for every $x \in K$. Whence, it is sufficient to approximate $F$, and then restrict $F$ to $K$ to deduce an estimate on $f$. Without loss of generality, we can assume that the function $f$ is not constant.\\
\indent 
To shorten the notation, we now set for $n \in \mathbb{N}$ the map $A_{E:n}$ in the following way $A_{E:n} \eqdef I_{E:n} \circ P_{E:n}\,:\,(E, d_{E}) \longrightarrow (E, d_{E})$. In particular, for every $x \in E$ it holds that $A_{E:n}(x)  =  \sum_{h=1}^{n} \langle \beta_h^{E}, x\rangle e_h$, where, we remind, $(\langle \beta_h^{E},x\rangle)_{h = 1}^{\infty}$ is the \textit{unique} real sequence satisfying the following equality $x = \sum_{h=1}^{\infty} \langle \beta_h^{E},x\rangle e_h$. It is manifest that these maps $A_{E:n}$ are linear, continuous, with finite dimensional range,
and converging to the identity of $E$ as $n\to\infty$, i.e. they are equi-continuous. %By Banach-Steinhaus's theorem for Frechet spaces\footnote{See, e.g., \cite{K1983}, Result 39.1 Page 141.}, they are (uniformly) equicontinuous. We then see that they satisfy Definition  \ref{def:bounded_approximation_property}.    

Let define $\omega_{A, E}\,:\,[0,\infty) \rightarrow [0,\infty)$ the modulus of continuity of the family $(A_{E:n})_{n \in \mathbb{N}}$, which we get from Lemma \ref{lem:aux_lemma_2} and Remark \ref{rmk: eli eli lama sabachthani 2}. {\color{black} We note that $\omega_{A,E}$ is non-decreasing.}
% Since $\omega_{A,E}$ might be not non-decreasing, with a slight abuse of notation we re-define it as $\frac{1}{t} \int_t^{2t} \sup_{s\leq x}\, \omega_{A, E}(s)\,dx$, obtaining now the sought non-decreasing property. 
Moreover, let $\omega_{A, E}^{\dagger}$ be the generalized inverse of $\omega_{A, E}$; see Subsection \ref{subsec:generalized_inverse}. A similar reasoning done into the Fr\'echet space $B$ with $A_{B:n}$ defined similarly to $A_{E:n}$ leads to the existence of a continuous non-decreasing modulus of continuity $\omega_{A, B}:[0,\infty)\to [0,\infty)$, whose generalized inverse will be denoted as $\omega_{A, B}^\dagger$ this time. 

Because of the equi-continuity of $(A_{E:n})_{n \in \mathbb{N}}$, for any ``encoding error" $\varepsilon_{D}$ there exists $n' \in \mathbb{N}_{+}$ such that, if $n \geq n'$, then the following estimation holds: $\max_{x \in K} d_{E}(A_{E:n}(x), x) < \frac{1}{\lambda}\omega^{\dagger}_{A, B}\left(\frac{\varepsilon_{D}}{2}\right)$; see the argument below Proposition \ref{prop:proposition_BONET_116} for a precise reference of the previous fact. 

Moreover, analogously as above, we derive the following inequality, because $F(K)$ is compact: $\max_{x \in F(K)} d_{B}(A_{B:n}(x), x) < \frac{\varepsilon_{D}}{2}$. Thus, the following positive integers
\begin{equation}\label{eq:dimensionality_approximation_theorem}
    \begin{split}
        n^{in}
            &  
            \eqdef  
        \inf\Big\{n \in \mathbb{N}_{+}\,:\,\max_{x \in K} d_{E}(A_{E:n}(x), x) \le \frac{1}{\lambda}\omega^{\dagger}_{A,B}\left(\frac{\varepsilon_{D}}{2}\right)\Big\},
        \\
        n^{out} &  \eqdef  \inf\Big\{n \in \mathbb{N}_{+}\,:\,\max_{y \in F(K)} d_{B}( A_{B:n}(y), y) \le \frac{\varepsilon_{D}}{2}\Big\},
    \end{split}
\end{equation}
are finite. At this point, we remind that $\psi$ and $\varphi$ are the following two set-theoretic identity maps 
\begin{equation}\label{eq:set_theoretic_identity}
    \psi\,:\,(\mathbb{R}^{n^{in}}, d_{E:n^{in}})\longrightarrow (\mathbb{R}^{n^{in}},\|\cdot\|_2),\quad \phi\,:\,(\mathbb{R}^{n^{out}},\|\cdot\|_2) \longrightarrow (\mathbb{R}^{n^{out}}, d_{B:n^{out}}),
\end{equation}
and we define the following map $\bar{F}:\,(\mathbb{R}^{n^{in}}, \|\cdot\|_2) \longrightarrow (\mathbb{R}^{n^{out}},\|\cdot\|_2)$ by $\bar{F} \eqdef  \phi^{-1} \circ P_{B:n^{out}} \circ F \circ I_{E:n^{in}} \circ \psi^{-1}$. Notice that since $\phi \circ P_{B:n^{out}}$ and $I_{E:n^{in}} \circ \psi^{-1}$ are continuous linear maps and $F$ is $C^{k,\lambda}$-stable by assumption, then $\bar{F} \in C^{k, \lambda}(\mathbb{R}^{n^{in}}, \mathbb{R}^{n^{out}})$.\\
\indent     Now, let $\hat{f}_{\theta} \in \mathcal{NN}_{[d]}^{\text{ReLU}}$ a deep ReLU neural network having \emph{complexity} $[d]  \eqdef  (d_0, \ldots, d_{J})$ for a multi-index $[d]$ and a $J \in \mathbb{N}_{+}$ such that $d_{0} = n^{in}$ and $d_{J} = n^{out}$.  Moreover, in order not to burden the notation, we set for $k \in \{E, B\}$ and $\ell \in \{in, out\}$, $I_{k}  \eqdef  I_{k:n^{\ell}}$, $P_{k}  \eqdef  P_{k:n^{\ell}}$ and, as before, $A_k \eqdef  I_k\circ P_k$. Then, the following estimate holds: 
\begin{align}
\label{eq:Proof_Static_Term_to_Bound}
\vspace{-0.5cm}
    &   \max_{x\in K}\,d_{B}\big(I_B \circ \phi \circ \hat{f}_{\theta}\circ \psi\circ P_E(x),f(x)\big)\\
=   & \max_{x\in K}\,d_{B}\big(I_B \circ \phi \circ \hat{f}_{\theta}\circ \psi\circ P_E(x),F(x)\big)\label{eq:extension_to_F}\\ 
\leq & 
    \max_{x\in K}\,
        d_{B}\big(
            I_B\circ\phi\circ\hat{f}_{\theta}\circ\psi\circ P_E(x),
            I_B\circ \phi\circ \phi^{-1}  \circ P_{B} \circ F \circ I_{E} \circ \psi^{-1} \circ \psi\circ P_E(x)\big)\label{eq:triangular_inequality}\\
  +& 
    \max_{x\in K}\,
        d_{B}\big(
            I_B\circ \phi\circ \phi^{-1} \circ P_{B} \circ F \circ I_{E} \circ \psi^{-1} \circ \psi\circ P_E(x), I_B\circ \phi\circ\phi^{-1} \circ P_B\circ F(x)\big)\nonumber\\
  +& 
    \max_{x\in K}\,
        d_{B}\big(I_B\circ \phi\circ\phi^{-1}\circ P_B\circ F(x), F(x)\big)\nonumber\\
%% Identities
= & 
\label{eq:Proof_Static_NeuralApprox_Term}
    \max_{x\in K}\,
        d_{B}\big(I_B\circ \phi  \circ \hat{f}_{\theta}\circ \psi\circ P_E(x),
            I_B \circ \phi \circ \bar{F} \circ \psi\circ P_E(x)\big)
\\
\label{eq:Proof_Static_E_DimRed_Estimate_}
+& 
    \max_{x\in K}\,
        d_{B}\big(I_B\circ P_B \circ F\circ I_E \circ P_E(x),
        I_B\circ P_B\circ F(x)\big)
\\
\label{eq:Proof_Static_B_DimRed_Estimate}
 +& 
    \max_{y\in f(K)}\,
        d_{B}\big(I_B\circ P_B(y),y\big),
\end{align}
where the equality in Equation \eqref{eq:extension_to_F} follows from the fact that on the compact $K$ the maps $f$ and $F$ coincides, the inequality in Equation \eqref{eq:triangular_inequality} follows from the triangular inequality by using the diagram chase in Figure \ref{fig:tikz_static_proof_summary}, and the equality in Equation \eqref{eq:Proof_Static_NeuralApprox_Term} from the definition of $\bar{F}$. We now bound each of the above terms \eqref{eq:Proof_Static_NeuralApprox_Term}, \eqref{eq:Proof_Static_E_DimRed_Estimate_} and \eqref{eq:Proof_Static_B_DimRed_Estimate}. We start from the last one: it is controlled, by using the definition of $n^{out}$ as:
\begin{equation}\label{eq:estimation_first}
    \max_{y \in f(K)} d_{B}(I_{B} \circ P_{B}(y), y) < \frac{\varepsilon_{D}}{2}.
\end{equation}
\indent We now bound the second term, i.e., the term $ \max_{x\in K}\,d_{B}\big(I_B\circ P_B \circ F\circ I_E \circ P_E(x), I_B\circ P_B\circ F(x)\big)$. Recall that $F$ is $\lambda$-Lipschitz. By using the definition of $n^{in}$ in \eqref{eq:dimensionality_approximation_theorem}, we have for $x\in K$: 
\begin{equation}
\label{eq:Proof_Static_E_DimRed_Estimate_1}
\begin{aligned}
&d_{B}\big(I_B\circ P_B \circ F\circ I_E \circ P_E(x), I_B\circ P_B\circ F(x) \big)\\
& \leq \omega_{A,B}\left[ d_B(F\circ I_E\circ P_E(x),F(x)) \right] \\
&\leq \omega_{A,B}\left[\lambda \,d_E(I_E\circ P_E(x),x) \right]
\\
&\leq  \omega_{A,B}\Big(\lambda \max_{x\in K}\,d_E\big(I_E \circ P_E(x),x\big)\Big) \leq 
    \omega_{A,B}\Big(
        \lambda \frac{1}{\lambda}\omega^{\dagger}_{A,B}\Big(\frac{\varepsilon_D}{2}\Big)\Big) = \frac{\varepsilon_{D}}{2},
\end{aligned}
\end{equation}
and hence $\max_{x\in K}d_{B}\big(I_B\circ P_B \circ F\circ I_E \circ P_E(x), I_B\circ P_B\circ F(x) \big)\leq \varepsilon_D/2$.

We now control the term \eqref{eq:Proof_Static_NeuralApprox_Term}. In order to do so, we make the following observations: (\,1\,) $(R^{n^{in}}, d_{E:n^{in}})$ is a topological vector space in which the topology coincides with the standard one; see Lemma \ref{lem:aux_lemma}; (\,2\,) therefore, the identity map and its inverse are continuous. (\,3\,) Being linear, it is also uniform continuous; see \cite{S1971}, Page 74. These observations allow us to define $\omega_{\varphi}\,:\,[0, +\infty) \rightarrow [0, +\infty)$ the modulus of continuity of the map $\varphi$ which we may assume to be, without loss of generality\footnote{See the argument done above for $\omega_{A, E}$.}, continuous and strictly monotone; $\omega_\varphi^\dagger$ will denote, as usual, its generalized inverse. This allows us to compute: 
\begin{equation}
    \label{eq:Proof_Static_NeuralApprox_Term__BOUNDED}
\begin{aligned}
&   \max_{x\in K}\,
        d_{B}\big(I_B\circ \phi \circ \hat{f}_{\theta}\circ \psi\circ P_E(x),
            I_B \circ \phi\circ \bar{F} \circ \psi\circ P_E(x)\big)
\\
\le &
    \max_{x\in K}\, d_{B:n^{out}}
       \Big(\phi \circ \hat{f}_{\theta}\circ \psi\circ P_E(x),
            \phi\circ \bar{F} \circ \psi\circ P_E(x)\Big)
\\
\le &
    \max_{x\in K}\, \omega_\phi
       \Big( \| \hat{f}_{\theta}
        \circ \psi\circ P_E(x)-\bar{F} \circ \psi\circ P_E(x)\|_2\Big)
\\
\le &
    \omega_{\phi}\Big(\max_{x\in K}\,
    \| \hat{f}_{\theta}
        \circ \psi\circ P_E(x)-\bar{F} \circ \psi\circ P_E(x)\|_2 \Big)
\\
= & \omega_{\phi}\Big(
        \max_{u\in \psi\circ P_E(K)}\,
        \|\hat{f}_{\theta}(u)-\bar{F}(u)\|_2\Big)
,
\end{aligned}
\end{equation}
where the second line of~\eqref{eq:Proof_Static_NeuralApprox_Term__BOUNDED} holds since $I_B$ is an isometric embedding, and thus in particular $\operatorname{Lip}(I_B)=1$.

We now remind that $\bar{F} \in C^{k, \lambda}(\mathbb{R}^{n^{in}}, \mathbb{R}^{n^{out}})$; by Theorem \ref{thm:approximation_in_C_s}, we can pick the above-mentioned ReLU neural network $\hat{f}_{\theta}$ in such a way that 
\begin{equation}
\label{eq:Proof_StaticResult_ReLUFFNN_Estimate}
      \max_{u\in \psi\circ P_E(K)}\,
        \| 
            \hat{f}_{\theta}(u)
        -
            \bar{F}(u)
        \|_2
    \leq 
        \omega_{\phi}^{\dagger}(\varepsilon_{A})
    =: 
        \delta,
\end{equation}
where $\varepsilon_{A}$ is the ``approximation error" as in the statement of the theorem; we will prove later on the existence of such $\hat{f}_{\theta}$. Meanwhile, we note that the bound in Equation \eqref{eq:Proof_Static_NeuralApprox_Term__BOUNDED} becomes:
\begin{equation*}
\max_{x\in K}\,
        d_{B}\big(I_B\circ \phi  \circ \hat{f}_{\theta}\circ \psi\circ P_E(x),
            I_B \circ \phi \circ \bar{F} \circ \psi\circ P_E(x)\big)
\leq  \omega_{\varphi} \Big(\omega^{\dagger}_{\varphi}\Big( \varepsilon_{A} \Big)\Big) \leq \varepsilon_{A}.
\end{equation*}
\noindent Putting together the previous equation with the estimates in Equations \eqref{eq:estimation_first} and \eqref{eq:Proof_Static_E_DimRed_Estimate_1}, we have that:
\begin{equation*}
\max_{x\in K}\,d_{B}\big(I_B\circ \phi\circ \hat{f}_{\theta}\circ \psi\circ P_E(x),f(x)\big)\leq 
        \varepsilon_D 
    + 
        \varepsilon_A
\end{equation*}

\indent Finally, we demonstrate the existence of a map $\hat{f}_{\theta}$, which ``\textit{depends upon some parameters}'' and that satisfies the estimates in Equation~\eqref{eq:Proof_StaticResult_ReLUFFNN_Estimate}. Before proceeding, we make the following considerations: (1) $\bar{F} \in C^{k,\lambda}(\mathbb{R}^{n^{in}}, \mathbb{R}^{n^{out}})$, where $\mathbb{R}^{n^{in}}$ and $\mathbb{R}^{n^{out}}$ are endowed with the Euclidean topology. (2) We can define, by using a reasoning similar to the one used for $\omega_{\varphi}$, $\omega_{\psi}\,:\,[0,+\infty) \rightarrow [0,+\infty)$ the modulus of continuity of the map $\psi$ which we may assume to be continuous and strictly monotone; $\omega_{\psi}^{\dagger}$ will denote its generalized inverse. (3) Moreover, the following estimates hold true:
\begin{equation*}
    \begin{split}
        d_{E:n^{in}}(P_{E}(x), P_{E}(y)) &= d_{E}\left(\sum_{h=1}^{n^{in}}\langle \beta_h^{E}, x\rangle e_h, \sum_{h=1}^{n^{in}}\langle \beta_h^{E}, y\rangle e_h\right)\\ 
                                                    &= d_{E}(A_{E}(x), A_{E}(y)) \leq \omega_{A, E}(d_{E}(x, y))\quad \forall x, y \in E.
    \end{split}
\end{equation*}
\noindent Now, let $\text{diam}_{E}(\cdot)$, $\text{diam}_{2}(\cdot)$ and $\text{diam}_{E:n^{in}}(\cdot)$ denote the \emph{diameter} computed with respect to the metric $d_{E}$, the Euclidean distance and the distance $d_{E:n^{in}}$ respectively. It holds that:
\begin{equation*}
    d_{E:n^{in}}(P_{E}(x), P_{E}(y)) \leq \omega_{A, E}(d_{E}(x, y)) \leq \omega_{A, E}(\text{diam}_E(K)),\quad \forall x, y \in K.
\end{equation*}
\noindent Moreover, it follows that:
\begin{equation*}
    \|\psi \circ P_{E}(x) - \psi \circ P_{E}(y) \|_2 \leq \omega_{\psi}(d_{E:n^{in}}(P_{E}(x), P_{E}(y))) \leq \omega_{\psi}(\omega_{A, E}(\text{diam}_{E}(K))), \quad \forall x, y \in K.
\end{equation*}
In particular, it holds that:
\begin{equation}\label{estimate:diameter}
    \text{diam}_2(\psi \circ P_{E}(K)) \leq \omega_{\psi}(\omega_{A, E}(\text{diam}_{E}(K))).
\end{equation}
\noindent We now identify a hypercube ``nestling" $\psi \circ P_{E:n^{in}}(K)$, and we explicit the dependence on $n^{in}$. To this end, let 
\begin{equation*}
    r_{K}  \eqdef   \omega_{\psi}(\omega_{A,E}(\text{diam}_{E}(K)))\sqrt{\frac{n^{in}}{2 (n^{in}+1)}}.
\end{equation*}
By Jung's Theorem\footnote{See \cite{J1901JFDRUAM}.}, there exists $x_0 \in \mathbb{R}^{n^{in}}$ such that the closed Euclidean ball $\overline{\text{Ball}_{(\mathbb{R}^{in},\|\cdot\|_2)}\left(x_0, r_{K} \right)}$ contains $\psi \circ P_{E:n^{in}}(K)$.  Now set, for rotational convenience, $\bar{1}  \eqdef  (1, \ldots, 1) \in \mathbb{R}^{n^{in}}$, and define the the following  affine function  $ W\,:\,(\mathbb{R}^{n^{in}}, \|\cdot\|_2)\rightarrow (\mathbb{R}^{n^{in}}, \|\cdot\|_2)$:
\begin{equation*}
    W\,:\,(\mathbb{R}^{n^{in}}, \|\cdot\|_2)\rightarrow (\mathbb{R}^{n^{in}}, \|\cdot\|_2)  \quad x \rightarrow W(x) \eqdef  (2 r_{K})^{-1} (x - x_0) + \frac{1}{2}\bar{1},
\end{equation*}
\noindent which is well-defined and invertible, and maps $\psi \circ P_{E:n^{in}}(K)$ to $[0, 1]^{n^{in}}$. In particular, the map 
\begin{equation}\label{eq:fbarcomposewminusone}
\bar{F} \circ W^{-1}\,:\,(\mathbb{R}^{n^{in}},\|\cdot\|_2)\rightarrow (\mathbb{R}^{n^{out}},\|\cdot\|_2)
\end{equation}
is of class $C^{k, \lambda}$: indeed,we already know that $\bar{F}$ is $C^{k, \lambda}$; pre-composing $\bar{F}$ with the smooth map $W^{-1}$ clearly produces an object of class $C^{k, \lambda}$.  As a consequence, if we denote by $(\bar{e}_i)_{i = 1}^{n^{out}}$ the standard orthonormal basis of $(\mathbb{R}^{n^{out}},\| \cdot \|_2)$, then the maps $\bar{f}_i \eqdef \langle \bar{F} \circ W^{-1}, \bar{e}_i \rangle$, $i \in [[ n^{out}]]$, are of class $C^{k, \lambda}$; where here, $\langle\cdot ,\cdot\rangle$ is the standard Euclidean scalar product.  Moreover, by construction, for each $x\in \rr^{n^{in}}$ it holds that 
\begin{equation}\label{identity:translation_to_cube}
    \begin{aligned}
        \sum_{i=1}^{n^{out}} \bar{f}_i(x)\bar{e}_i = \bar{F} \circ W^{-1}(x).
    \end{aligned}
\end{equation}
\noindent Therefore, we may apply Theorem~\ref{thm:approximation_in_C_s} to $\bar{F}\circ W^{-1}$ (restricted to the unit cube) $n^{out}$ times to deduce that there are $n^{out}$ ReLU FFNN $\hat{f}_{\theta}^{(i)}\,:\,\mathbb{R}^{n^{in}}\rightarrow\mathbb{R}$, $i \in [[ n^{out}]]$, satisfying to the following estimate
\begin{equation}\label{estimate:univariateFFNNestimate_on_fdim_cube}
    \begin{aligned}
        \max_{i = 1, \ldots, n^{out}}\sup_{x\in [0,1]^{n^{in}}}\, |\bar{f}_i(x)-\hat{f}_{\theta}^{(i)}(x)|\leq \frac{\delta}{\sqrt{ n^{out}}}.
    \end{aligned}
\end{equation}

\noindent 
In the notation of Theorem~\ref{thm:approximation_in_C_s}, if we set, $C_3\eqdef\,\max_{i=1,\ldots, n^{out}} \,\|\bar{f}_i\|_{C^{k}([0,1]^{n^{in}})}\, N^{-2k/n^{in}}\, L^{-2k/n^{in}} = \delta/(n^{out})^{1/2}
$ and we also set $N=L$ then, the same result implies that the width and the depth of each $\hat{f}_{\theta}^{(i)}$ is provided in the same reference and, 
upon recalling the definition of $\delta$ in~\eqref{eq:Proof_StaticResult_ReLUFFNN_Estimate} we find that it is given by:
\begin{itemize}
\item[(i)] \textbf{\emph{Width}\,:\,}
    \begin{equation}\label{eq:width_approximation_theorem_proof}
    \hspace{-1.5cm}
        \begin{split}
        & 
                C_1\biggl(
                        % N - BEGIN
                        \big\lceil
                            (C_3C_{\bar{f}})^{n^{in}/4k}
                                \,
                            (n^{in})^{n^{in}/8k}
                                \,
                            [\omega_{\phi}^{\dagger}(\varepsilon_{A})]^{-2k/n^{in}}
                        \big\rceil
                        % N - END
                    +
                        2
                \biggr)
            \cdot 
                \log_2\biggl(
                    8
                    \,
                    % N - BEGIN
                    \big\lceil
                        (C_3C_{\bar{f}})^{n^{in}/4k}
                            \,
                        (n^{in})^{n^{in}/8k}
                            \,
                        [\omega_{\phi}^{\dagger}(\varepsilon_{A})]^{-2k/n^{in}}
                    \big\rceil
                    % N - END
                \biggr)
\end{split}
\end{equation}
\item[(ii)] \textbf{\emph{Depth}\,:\,}
\begin{equation}\label{eq:depth_approximation_theorem_proof}
    \hspace{-1.5cm}
        \begin{split}
            &
            C_2\,
                \biggl(
                    %L = N
                    % N - BEGIN
                    \big\lceil
                        (C_3C_{\bar{f}})^{n^{in}/4k}
                            \,
                        (n^{in})^{n^{in}/8k}
                            \,
                        [\omega_{\phi}^{\dagger}(\varepsilon_{A})]^{-2k/n^{in}}
                    \big\rceil
                    % N - END
                +
                    2
                \biggr)
                \,
                \log_2\biggl(
                    %L = N 
                    % N - BEGIN
                    \big\lceil
                        (C_3C_{\bar{f}})^{n^{in}/4k}
                            \,
                        (n^{in})^{n^{in}/8k}
                            \,
                        [\omega_{\phi}^{\dagger}(\varepsilon_{A})]^{-2k/n^{in}}
                    \big\rceil
                    % N - END
                \biggr)
                +
                2
                n^{in}
        \end{split}
    \end{equation}
    where $C_{1} \eqdef  17 k^{n^{in}+1} 3^{n^{in}} n^{in}$, $C_2 = 18 k^2$, $C_3 = 85 (k+1)^{n^{in}} 8^{k}$ and $C_{\bar{f}} \eqdef \max_{i=1,\ldots, n^{out}} \|\bar{f}_i\|_{C^{k}([0,1]^{n^{in}})}$.
\end{itemize}

Since the ReLU has the 2-Identity Property\footnote{See Definition \ref{def:c_identity}.}, we can apply Proposition \ref{prop:number_parameters_c_identity} to conclude that there exists an ``efficient parallelization" $\tilde{f}\,:\,\mathbb{R}^{n^{in}} \rightarrow \mathbb{R}^{n^{out}}$ of $x \rightarrow (\hat{f}_{\theta}^{(i)}(x),\ldots, \hat{f}_{\theta}^{(n^{out})}(x))$. This is equivalent to say that for every $x \in \mathbb{R}^{n^{in}}$ the following identity holds true $\tilde{f}(x) \eqdef (\hat{f}_{\theta}^{(1)}(x), \ldots, \hat{f}_{\theta}^{(n^{out})}(x))$. The width and the depth of $\tilde{f}$, denoted by $\emph{Width}(\tilde{f})$ and $\emph{Depth}(\tilde{f})$ are given by:
\begin{itemize}
    \item[(\,2\,)] \textbf{\emph{Width}}\,:\,
    \begin{equation}\label{eq:width_approximation_theorem_proof_1}
    \hspace{-.5cm}
        \begin{split}
                \text{\emph{Width}}(\tilde{f}) 
                &
                = n^{in}(n^{out}-1) + \text{\emph{Width}}(\hat{f}^{(1)}_{\theta})
        \end{split}
    % $} % END RESIZE BOX
    \end{equation}
    where $\text{\emph{Width}}(\hat{f}^{(1)}_{\theta})$ denotes the width of $\hat{f}_{\theta}^{(1)}$, and where we have used the fact that $\text{\emph{Width}}(\hat{f}^{(1)}_{\theta}) = \text{\emph{Width}}(\hat{f}^{(i)}_{\theta})$ for every $i = 1, \ldots, n^{in}$.
 \item[(\,3\,)] \textbf{\emph{Depth}}\,:\,
    \begin{equation}\label{eq:depth_approximation_theorem_proof_1}
    \hspace{-.5cm}
        \begin{split}
                \text{\emph{Depth}}(\tilde{f})
                &= 
                n^{out}(1+\text{\emph{Depth}}(\hat{f}_{\theta}^{(1)}))
                ,
        \end{split}
    \end{equation}
     where $\text{\emph{Depth}}(\hat{f}^{(1)}_{\theta})$ denotes the width of $\hat{f}_{\theta}^{(1)}$, and where we have used the fact that $\text{\emph{Depth}}(\hat{f}^{(1)}_{\theta}) = \text{\emph{Depth}}(\hat{f}^{(i)}_{\theta})$ for every $i = 1, \ldots, n^{out}$.
\end{itemize}

\noindent Finally, define $\hat{f}_{\theta}  \eqdef  \tilde{f} \circ W$ and note that the space $\mathcal{NN}^{\operatorname{ReLU}}_{[d]}$ introduced in Subsection \ref{subsec:feed_forward} is invariant to pre-composition by affine maps. Therefore, $\hat{f}_{\theta}$ has the same depth and width of $\tilde{f}$. Whence, we have:
\[
    \begin{aligned}
        \max_{u\in \psi\circ P_{E:n^{in}}(K)}\,\|\hat{f}_{\theta}(u) - \bar{F}(u)\|_2 &= \max_{u\in \psi\circ P_{E:n^{in}}(K)}\,\|      \tilde{f}\circ W(u) - \bar{F}(u) \|_2\\
            &= \max_{z\in W[\psi\circ P_{E:n^{in}}(K)]}\,\|\tilde{f}(z) -   \bar{F}\circ W^{-1}(z)\|_2\\
            &\leq \max_{z\in [0,1]^{n^{in}}}\,\|\tilde{f}(z)- \bar{F}\circ W^{-1}(z)\|_2\\
            &\leq \sqrt{n^{out}} \max_{i = 1, \ldots, n^{out}}\,\max_{z\in [0,1]^{n^{in}}}\,\| \hat{f}_{\theta}^{(i)} - \bar{f}_i(z)\|_2\\
            &\leq \sqrt{n^{out}} \frac{\delta}{\sqrt{n^{out}}} = \delta.
    \end{aligned}
\]
which is nothing but \eqref{eq:Proof_StaticResult_ReLUFFNN_Estimate}. The Theorem is whence proved for $f \in C_{\operatorname{tr}}^{k,\lambda}(K, B)$.\\
%%%%%%%%%%%%%%%%%%%%%%%%%%
\paragraph{The $C_{\alpha,\text{tr}}^{\lambda}(K, B)$ Case:} 
%%%%%%%%%%%%%%%%%%%%%%%%%%
We report to the reader the main changes of the proof.
\begin{itemize}
    \item[(\,i\,)] The quantity $n^{in}$ in Equation \eqref{eq:dimensionality_approximation_theorem} is instead given by:
    \begin{equation*}
        n^{in} 
            \eqdef  
        \inf\left\{n \in \mathbb{N}_{+}\,:\,\max_{x \in K} d_{E}(A_{E:n}(x), x) \le \left(\frac{1}{\lambda}\omega^{\dagger}_{A,B}\,\left(\frac{\varepsilon_{D}}{2}\right)\right)^{1/\alpha}\right\}.
    \end{equation*}
    In this way, the estimate in Equation \eqref{eq:Proof_Static_E_DimRed_Estimate_1} continues to hold with $F \in C_{\alpha,\text{tr}}^{\lambda}(K, B)$.
    \item[(\,ii\,)] The inequality in Equation \eqref{eq:Proof_StaticResult_ReLUFFNN_Estimate} is now guaranteed by Theorem~\ref{thrm:approximation_in_holder}, instead of by Theorem \ref{thm:approximation_in_C_s}. 
    Note, that the pre/post-composition of an $\alpha$-H\"older function with a Lipschitz function is again an $\alpha$-H\"older function.
    \item[(\,iii\,)] The function $\bar{F}\circ W^{-1}$ in Equation \eqref{eq:fbarcomposewminusone} is $C_{\alpha,\text{tr}}^{\lambda}(K, B)$, and so, we may apply Theorem~\ref{thrm:approximation_in_holder} to deduce that there are $n^{in}$ ReLU FFNN satisfying to the estimates in Equation \eqref{estimate:univariateFFNNestimate_on_fdim_cube}.
    \item[(\,iv\,)] The width and the depth of each $\hat{f}_{\theta}^{i}$ are thus provided by Theorem~\ref{thrm:approximation_in_holder}.
    Setting $N=L$ in that result yields
    \begin{itemize}
    \item[(i)] \textbf{\emph{Width}\,:\,}
        \begin{equation}\label{eq:width_approximation_theorem_holder_proof}
        \hspace{-0.5cm}
        \resizebox{0.90\textwidth}{!}{$ 
            \begin{aligned}
            C_1
            \,
            \max\biggl\{
                n^{in}
                \,
                \biggl\lfloor
                \biggl(
                % N - BEGIN
                    [\omega_{\phi}^{\dagger}(\varepsilon_{A})]^{-n^{in}/\alpha}
                    \,
                    V\big(
                        (131 \,\lambda )^{n^{in}/\alpha}
                        \,
                        (n^{in} n^{out})^{n^{in}/\alpha}
                    \big)
                % N - END
                \biggr)^{1/n^{in}}
                \biggr\rfloor
            ,
                    \biggl\lceil
                        % N - BEGIN
                        [\omega_{\phi}^{\dagger}(\varepsilon_{A})]^{-n^{in}/\alpha}
                        \,
                        V\big(
                            (131 \,\lambda )^{n^{in}/\alpha}
                            \,
                            (n^{in} n^{out})^{n^{in}/\alpha}
                        \big)
                        % N - END
                    \biggr\rceil
                +
                    2
            \biggr\}
            \end{aligned}
        $}
        \end{equation}
        with $C_1 = 3^{n^{in}}+3$.
    \item[(ii)] \textbf{\emph{Depth}\,:\,}
        \begin{equation}\label{eq:depth_approximation_theorem_holder_proof}
        \hspace{-.5cm}
            \begin{split}
            11\,
            % L = N
            \biggl\lceil
                % N - BEGIN
                [\omega_{\phi}^{\dagger}(\varepsilon_{A})]^{-n^{in}/\alpha}
                \,
                V\big(
                    (131 \,\lambda )^{n^{in}/\alpha}
                    \,
                    (n^{in} n^{out})^{n^{in}/\alpha}
                \big)
                % N - END
            \biggr\rceil
            \,
            +
            C_2
            \end{split}
        % $}
        \end{equation}
        with $C_2 = 18 + 2\,n^{in}$.
    \end{itemize}
\item[(\,vi\,)] The considerations on the existence of an ``efficient parallelization" continue to hold with the width and depth appropriately defined by using (\,v\,).  
\end{itemize}

\end{proof}

\subsection{Proof of \texorpdfstring{Corollary~\ref{cor:dynamiccase__regular_subcase}}{Efficient Approximation Case}}
\label{s:Efficient_and_CleanRates}

\noindent Before proving Corollary~\ref{cor:dynamiccase__regular_subcase}, we recall the definition of a linear $i$-width of a subset $A$ of a infinite-dimensional normed linear space $(X,\|\cdot\|_X)$, see e.g.~\cite[Definition I.1.2]{Pin1985Book}: for every $i\in \mathbb{N}_+$ set
    \[
            \delta_i(A,X)
        \eqdef 
            \inf_{T}\,\sup_{a\in A}\,\|a-Ta\|_X
    \]
where the infimum is taken over all continuous linear operators $T:X\rightarrow X$ whose rank is at most $i$.  It will also be convenient, for the proof of Corollary~\ref{cor:dynamiccase__regular_subcase}, to recall the definition of the $i$-width in the sense of Kolmogorov, see e.g.~\cite[Defintiion I.1.1]{Pin1985Book}.  For any $i\in\mathbb{N}$ and any subset $A$ of an infinite-dimensional Banach space $X$, the \textit{Kolmogorov i width of $A$ in $X$} is defined to be
\[
        d_i(A,X)
    \eqdef 
        \inf_{X_i}
        \,
        \sup_{a\in A}\,
        \inf_{u\in X_i}
        \,
        \|a-u\|_X
\]
where the outer infimum is taken over all $i$-dimensional linear subspaces $X_i$ of $X$.  Both of these notions of ``width'', i.e.\ linear complexity, of a subset coincide when the space $X$ is a Hilbert space, see e.g.~\cite[Proposition II.5.2]{Pin1985Book}; however, we introduce both notions since some results are formulate for general Banach spaces using one width rather than the other, in most parts of the literature.

\begin{proof}[{Proof of Corollary~\ref{cor:dynamiccase__regular_subcase}}]
% By \cite[Corollary IV.2.6]{Pin1985Book}, for each $i\in \mathbb{N}$ 
For each $n\in \mathbb{Z}$, define the set 
% \textcolor{red}{$Z_{t_n}$?} 
$Z_n\subseteq E_{t_n}$ by 
% \textcolor{red}{maybe stupid question: Are there always elements with this characteristic?}
% {\textcolor{violet}{ for sure, we construct them by 
% $z= \sum_{i} z_i\, e_i$ where $|z_i|\lesssim e^{-\rho i}$ and note that $\sum z_i^2 <\infty$.}}
\[
Z_{n}\eqdef 
\big\{z\in E_{t_n}:\, 
% (\sum_{i=0}^{\infty}\,C^2e^{\rho\,i}\,\langle z,e_{n}\rangle_{E_{t_n}}^2 )^{1/2} 
{\color{black}(\forall i \in \mathbb{N})\, \langle z,e_{i}\rangle_{E_{t_n}}^2\le C^{-2 \rho i}}
% \le 1
\big\}
.
\]
{\color{black}Now, for each ``re-scaling parameter'' $0<r<1$ let $Z_n^r\eqdef r\cdot Z_n\subset Z_n$ where for any 
% \textcolor{red}{Should this $K$ depend on $n$?} \textcolor{violet}{nein, just an arbitrary set} 
$K\subset E_{t_n}$ we define $r\cdot K\eqdef \{rx:\, x\in  K\}$.
Note that, if $f:Z_{n}\mapsto \mathbb{R}$ is $\lambda$-Lipschitz then $f_r\eqdef f(r \cdot ): Z_{n} \ni x \to f(rx)\in \mathbb{R}$ is at-most $r\lambda$-Lipschitz.
% \luca{This is not correct: first of all, we need to ensure that $rx\in Z_n$ where $f$ is defined.  Since $x=ry$ for some $y\in Z_n$, this amounts to impose $0<r<1$, (which implies $Z_n^r\subset Z_n$). Then the constant is at most $r^2\lambda \le r\lambda$.  }
and satisfies
\begin{equation}
\label{eq:renormalization}
    f(x) = f_r\circ S_{1/r}(x)
\end{equation}
% \luca{also this is not correct: one must assume that $\frac{1}{r}\cdot x\in Z^r_n$}
for all $x\in r\cdot Z_n$; where $S_{1/r}:E_{t_n}\ni x\mapsto \frac1{r}\cdot x$.  Note that 
\begin{equation}
\label{eq:simple_key_point}
    S_{1/r}(Z_n^r) =Z_n
\end{equation}
for each $n\in \mathbb{Z}$.  We thus, approximate $f_r$ on each $Z_n$.}

Consider the Kolmogorov $i$-width $\delta_i(Z_{n,i},E_{t_n})$ 
is optimized by the linear subspace spanned by $\{e_{n,j}\}_{j=0}^{i-1}$ and satisfies
\begin{equation}
\label{eq:linear_width_LorenzIthink}
        \delta_i(Z_{n},E_{t_n})=d_i(Z_{n},E_{t_n})
    =
        \sup_{z\in Z_{n,i}}
        \,
        \inf_{u\in \operatorname{span}\{e_{n,j}\}_{j=0}^{i-1}}
        \,
            \big\|
                    z
                -
                    u
            \big\|_{E_{t_n}}
    % =
    %     C\,
    %     e^{-i\rho/2}
    {\color{black}
    \le 
        \sqrt{
            C\sum_{j=i}^{\infty}\,
            e^{-2 \rho j}
        }
    =
        \tilde{C}_1\,e^{-\rho i}
    }
% ,
\end{equation}
where the outer infimum is taken over all at-most $i$-dimensional subspaces $Z_{n,i}$ of $E_{t_n}$
{\color{black}and where $\tilde{C}_1\eqdef \sqrt{Ce^{\rho}/(1 - e^{-\rho})}>0$.}
Condition~\eqref{eq:f_compatability_condition} implies that, for each $n\in \mathbb{Z}$ and each $i\in \mathbb{N}$, we have the inclusion $\mathcal{X}_{t_n}\subseteq Z_{n,i}$; therefore, \cite[Theorem I.1.1 (v)]{Pin1985Book} implies
\[
        d_i(\mathcal{X}_{t_n},E_{t_n})
    \le 
        d_i(Z_{n},E_{t_n})
    .
\]
Consequentially,~\eqref{eq:linear_width_LorenzIthink} implies that, for each $n\in \mathbb{Z}$ and each $i\in \mathbb{N}$, the following holds
\begin{equation}
\label{eq:Kolmogorov_width_functorialitylinear_width_LorenzIthink}
        \delta_i(\mathcal{X}_{t_n},E_{t_n})
    =
        d_i(\mathcal{X}_{t_n},E_{t_n})
    \le 
        {\color{black}
            \tilde{C}_1
            e^{-i\rho}
        }
\end{equation}
Moreover, since $\{e_{n,j}\}_{j=0}^{i-1}$ is an orthonormal set then the orthogonal projection operator $A_{E_{t_n},i}:E_{t_n}\mapsto \operatorname{span}\{e_{n,j}\}_{j=0}^{i-1}$, given by $x\mapsto \sum_{j=0}^{i-1}\, \langle x,e_{n,j}\rangle_{E_{t_n}}\,e_{n,j}$ is optimal; whence,
\begin{equation}
\label{eq:eigenfunction_optimization}
        \delta_i(\mathcal{X}_{t_n},E_{t_n})
    =
        \sup_{z\in \mathcal{X}_{t_n}}
        \,
            \Big\|
                    z
                -
                \sum_{j=0}^{i-1}\,
                    \langle z,e_{n,j}\rangle_{E_{t_n}}\,e_{n,j}
            \Big\|_{E_{t_n}}
    =
        \sup_{z\in \mathcal{X}_{t_n}}
        \,
            \|
                z
                -
                A_{E_{t_n},i}(z)
            \|_{E_{t_n}}
.
\end{equation}
Since orthonormal basises of Hilbert spaces are trivially Schauder basises, then, for each $n\in \mathbb{Z}$ and every $i\in \mathbb{N}$, $A_{E_{t_n},i}$ is as in Table~\ref{tab:Model_Complexity} and together~\eqref{eq:Kolmogorov_width_functorialitylinear_width_LorenzIthink} and~\eqref{eq:eigenfunction_optimization} imply that
\begin{equation}
\label{eq:putting_it_all_together}
        \sup_{z\in \mathcal{X}_{t_n}}
        \,
            \|
                z
                -
                A_{E_{t_n},i}(z)
            \|_{E_{t_n}}
    \le 
        % C
        % \,
        % e^{-\rho\,i/2}
        {\color{black}\tilde{C}_1\,e^{-\rho i}}
.
\end{equation}
Note that in a separable Hilbert space, we have the $1$-BAP property (also called the \textit{metric approximation property}), and thus $\omega_{A,B}(t)=t$. In particular, 
$\frac{1}{\lambda{\color{black}r}}\omega^{\dagger}_{A,B}\left(\frac{\varepsilon_{D}}{2}\right)=\frac{\varepsilon_D}{2\lambda{\color{black}r}}$.
As recorded in Table~\ref{tab:Model_Complexity}, an encoding dimension $i$ of at-least $
        \inf\Big\{i \in \mathbb{N}_{+}\,:\,\max_{z \in \mathcal{X}_i} 
        \|A_{E_{t_n},i}(z)- z\|_{E_{t_n}} \le \frac{1}{\lambda{\color{black}r}}\omega^{\dagger}_{A,B}\left(\frac{\varepsilon_{D}}{2}\right)\Big\}
    $ is necessary to guarantee that $\max_{z \in \mathcal{X}_i} 
        \|A_{E_{t_n},i}(z)- z\|_{E_{t_n}} \le\frac{\varepsilon_D}{2\lambda{\color{black}r}}$.  
        Therefore, setting
\begin{equation}
\label{eq:constant_dimension_setup}
n_{\varepsilon_D}^{\text{in}} \le  i^{in} 
=
% \lceil 2\ln(\varepsilon_D^{-1}\,2\lambda C)/\rho \rceil
{\color{black}
    \big\lceil
        \ln(
            c
            % \lambda^{1/\rho}\,
            \,
            \underbrace{
                (r\varepsilon_D^{-1})^{1/\rho} 
            }
        )
    \big\rceil
}
\end{equation}
{\color{black}where $c\eqdef (2\tilde{C}_1 \lambda)^{1/\rho}$}, implies that
\begin{equation}
\label{eq:n_in_dimension}
        \sup_{z\in Z_n}
        \,
            \|
                z
                -
                A_{E_{t_n},i^{in}}(z)
            \|_{E_{t_n}}
    \le 
        % C
        % \,
        % e^{-\rho\,i^{in}/2}
        {\color{black}
        \tilde{C}_1\,e^{-\rho i^{in}}
        }
    \le
        \frac{\varepsilon_D}{2 \lambda }
.
\end{equation}
{\color{black}By~\eqref{eq:constant_dimension_setup}, setting $r=\varepsilon_D$ implies that~\eqref{eq:n_in_dimension} holds while
\begin{equation}
\label{eq:constant_dimension_setup}
n_{\varepsilon_D}^{\text{in}} \le  i^{in} 
=
{\color{black}
    \big\lceil
        \ln(
            c
            % \lambda^{1/\rho}\,
        )
    \big\rceil
\in \mathcal{O}(1)
.
}
\end{equation}
}
{\color{black}Since the target space is one dimensional then $n_{\varepsilon_D}^{out}=1$ for all $\varepsilon_D>0$.  Thus, when approximating $f$ on $r\cdot K$, for $r=\varepsilon_D$, both $n_{\varepsilon_D}^{\text{in}}$ and $n_{\varepsilon_D}^{\text{out}}$ are constants.}
% For each $n\in \mathbb{Z}$ and $i\in\mathbb{N}_+$, the Assumption~\eqref{eq:f_compatability_condition}, and the fact that $B_{t_n}$ is a separable infinite-dimensional Hilbert spaces implies the following, following a similar computation we deduce that a decoding dimension $n_{\varepsilon_D}^{out}$ of at-least
% \[
%         n_{\varepsilon_D}^{\text{out}}
%     \le
%     %     \ln\big(
%     %         \varepsilon_D^{- \rho/2}
%     %     \big)
%     % +
%     %     \ln(2^{\rho/2}C^{\rho/2})
%         {
%             \big\lceil
%             \ln(
%                 \varepsilon_D^{-1/\rho} \, 2\tilde{C}_1)^{1/\rho}
%             )
%             \big\rceil
%         \le
%             \big\lceil
%                 \ln(
%                     \varepsilon_D^{-1/\rho} \, A)^{1/\rho}
%                 )
%                 \big\rceil
%         }
% \]
% is no smaller than the optimal decoding dimension $\inf\left\{n \in \mathbb{N}_{+}\,:\,\underset{y \in F(K)}{\max}\,d_{B}( A_{B:n}(y), y)\le\frac{\varepsilon_{D}}{2}\right\}$.  

{\color{black}Fix $\varepsilon>0$, set $\varepsilon_A=\varepsilon_D=\varepsilon$.}
Since $f$ is $(r,\infty,\lambda)$-smooth then, it is $(r,\lceil n_{\varepsilon_D}^{\text{in}}/8\rceil,\lambda)$.  Therefore, for all $I\in \mathbb{N}_+$, Theorem~\ref{thm:theorem_universality_causal} implies that there is a CNO such that
\begin{equation}
\label{eq:funny_trick}
    \begin{split}
    \max_{i \in 
        [[I]]
    }\,
    \sup_{x \in K_n}\,\,
        d_{B_{t_i}}\big(
                \hat{f}_{t_i}(x_{(t_{i-M}, t_i]})
                    , 
                f(rx)_{t_i}
            \big) <  \varepsilon_A + \varepsilon_D,
    \end{split}
\end{equation}
where $\hat{f}_{t_i} \in \mathcal{NF}_{[n_{\varepsilon_D}]}^{(P)ReLU}$, $
\hat{f}_{t_i} = I_{B_{t_i}:n_{\varepsilon_D^{out}}} \circ \varphi_{n_{\varepsilon_D^{out}}}\circ \hat{f}_{\theta_{t_i}} \circ \psi_{n_{\varepsilon_D^{out}}}\circ P_{E_{(t_{i-M},t_i]}:n_{\varepsilon_D}^{in}}
$ where each $\hat{f}_{\theta_{t_i}}$.  Now, we use~\eqref{eq:simple_key_point} and the fact that $S_{1/r}$ is nothing but rescaling by a constant factor of $1/r$, which commutes with each $\psi_{n_{\varepsilon_D^{out}}}\circ P_{E_{(t_{i-M},t_i]}:n_{\varepsilon_D}^{in}}$ by linearity, and which can be absorbed into the first affine layer in each $\hat{f}_{\theta_{t_i}}$; denote these MLPs with re-scaled first affine layer by $\hat{f}_{\tilde{\theta}_{t_i}}$.  Note that the depth and width of each $\hat{f}_{\theta_{t_i}}$ and $\hat{f}_{\tilde{\theta}_{t_i}}$ are identical.
For each $i\in I$, re-define
\[
\tilde{f}_{t_i} = I_{B_{t_i}:n_{\varepsilon_D^{out}}} \circ \varphi_{n_{\varepsilon_D^{out}}}\circ \hat{f}_{\tilde{\theta}_{t_i}} \circ \psi_{n_{\varepsilon_D^{out}}}\circ P_{E_{(t_{i-M},t_i]}:n_{\varepsilon_D}^{in}}.
\]
Consequently,~\eqref{eq:funny_trick} implies that
\begin{equation}
\label{eq:funny_trick__worked}
    \begin{split}
    \max_{i \in 
        [[I]]
    }\,
    \sup_{x \in K_n^r}\,\,
        d_{B_{t_i}}\big(
                \tilde{f}_{t_i}(x_{(t_{i-M}, t_i]})
                    , 
                f(x)_{t_i}
            \big) 
=
    \max_{i \in 
        [[I]]
    }\,
    \sup_{x \in K_n}\,\,
        d_{B_{t_i}}\big(
                \hat{f}_{t_i}(x_{(t_{i-M}, t_i]})
                    , 
                f(rx)_{t_i}
            \big) 
    <  
        \varepsilon_A + \varepsilon_D,
    \end{split}
\end{equation}
Finally, Table~\ref{tab:Model_Complexity} implies that the neural filters defining the CNO have width at most
\allowdisplaybreaks
{\color{black}
\begin{align}
\label{eq:exact_width}
 C_1\left(\left\lceil C_3C_{\bar{f}}\, \sqrt{\lceil \ln(c) \rceil}\, \varepsilon^{-1/2} \right\rceil+2\right)\cdot \log_2\!\left(8\,\left\lceil C_3C_{\bar{f}}\, \sqrt{\lceil \ln(c) \rceil}\, \varepsilon^{-1/2} \right\rceil\right).
 \in
 \mathcal{O}(\varepsilon^{-1/2})
\end{align}
and its depth is 
\begin{align}
\label{eq:exact_depth}
 1 +
C_2\,\Biggl(
\Bigl\lceil C_3C_{\bar{f}}\,\sqrt{\lceil \ln(c) \rceil}\,\varepsilon^{-1/2} \Bigr\rceil + 2
\Biggr)
\,\log_2\!\Bigl(\Bigl\lceil C_3C_{\bar{f}}\,\sqrt{\lceil \ln(c) \rceil}\,\varepsilon^{-1/2} \Bigr\rceil\Bigr)
+ 2\,\lceil \ln(c) \rceil
 \in
 \mathcal{O}(\varepsilon^{-1/2}\log(1/\varepsilon))
\end{align}
Consequently, the number of non-zero (trainable) parameters is almost the width squared times the depth; whence $\mathcal{O}(\varepsilon^{-3/2}\log(1/\varepsilon)^3)$.}

{\color{black}Fix a time-horizon $I\in \mathbb{N}_+$.
Lastly, since the depth, with, and especially, a number of parameters of the hypernetwork only depends on $P([d])$ and on the time-horizon $I$; then, they are as in Table~\ref{tab:Model_Complexity_DynamicCase}.  Specifically, the number of trainable parameters defining the hypernetwork are at-most
\begin{align}
\mathcal{O}\Biggl(
I^3\,\Bigl(\varepsilon^{-3/2}\,\log(1/\varepsilon)^3+Q\Bigr)^2\
\Biggl(1+\Bigl(\varepsilon^{-3/2}\,\log(1/\varepsilon)^3+Q\Bigr)
    \sqrt{I\,\log(I)}\,
\Bigl(1+\frac{\log(2)}{\log(I)}\,\Bigl[C_d+\frac{2\log(I)+\tfrac{1}{2}\log(2)-\log(\delta)}{\log(2)}\Bigr]_+\Bigr)
\Biggr)
\Biggr)
\end{align}
which implies to $\mathcal{O}\big(\varepsilon^{-9/2} I^{1/2} \log(I)^{3/2}\log(1/\varepsilon)^9\big) \in \tilde{\mathcal{O}}(\sqrt{\varepsilon^{-9} I})$.
}

\end{proof}

\subsection{The Dynamic Weaving Lemma}\label{subsec:proof_waving_lemma}
We now present our main technical tool for ``weaving together'' several neural filters approximating a causal map on distinct time windows.  The key technical insight here is that each neural filter is approximated while the hypernetwork ``weaving together'' these neural filter memorizes, and memorization requires exponentially fewer parameters than approximation.  {\color{black} The reason for this is that memorizing \( N \) points requires between \( N \) and \( N^2 \) trainable (non-zero) parameters, as demonstrated in sources like \cite{VershinynMemorization} and \cite{Ruiyang}. Notably, only \( \mathcal{O}(1) \) neurons are necessary to memorize a function's value at a single point. In contrast, approximating a function's value on each sub-cube of \([0, 1]^d\) with side length \( \delta \) requires \( \mathcal{O}(1) \) neurons for each sub-cube, with a total of \( \Theta(\delta^{-d}) \) such sub-cubes. As a result, any uniform approximator needs an exponential number of neurons to uniformly approximate a function over any hypercube, whereas a memorizer of \( N \) points does not have that same requirement.}

\begin{lemma}[Dynamic Weaving Lemma]\label{lemma:deterministic_weaving}
Let $[d]=(d_0,\ldots, d_{J})$, $J \in \mathbb{N}_{+}$, be a multi-index such that $P([d]) = \sum_{j=0}^{J-1} d_{j} (d_{j+1} + 2) + d_{J} \geq 1$, and let $(\hat{f}_{\theta_t})_{t\in \N}$ a sequence in $\mathcal{N}\mathcal{N}^{\operatorname{(P)ReLU}}_{[d]}$. Then, for every ``latent code dimension'' $Q\in \mathbb{N}_+$ with $Q+P([d])\geq 12$ and every ``coding complexity parameter'' $\delta>0$, there is a ReLU FFNN $\hat{h}:\mathbb{R}^{P([d])+Q} \rightarrow \mathbb{R}^{P([d])+Q} $, an ``initial latent code'' $z_0\in \mathbb{R}^{P([d])+Q}$, and a linear map $L:\mathbb{R}^{P([d])+Q}\rightarrow \mathbb{R}^{P([d])}$ satisfying
\begin{equation*}
    \begin{split}
     \hat{f}_{L(z_t)} &= \hat{f}_{\theta_t},\\
     \quad z_{t+1}    &= \hat{h}(z_t),
    \end{split}
\end{equation*}
for every ``time'' $t=0,\dots, \big\lfloor \delta^{-Q} \big\rfloor =: T_{\delta,Q}-1$. 
Moreover, the ``model complexity'' of $\hat{h}$ is specified by 
\begin{enumerate}
	\item[(i)]  \textbf{Width:} $\mathcal{NN}$ has width at-most $(P([d])+Q)T + 12$;
	\item[(ii)] \textbf{Depth:} $\mathcal{NN}$ has depth at-most of the order of
    \begin{equation*}
        \mathcal{O}\Biggr(
        T\biggl(
        1+\sqrt{T\log(T)}\,\Big(1+\frac{\log(2)}{\log(T)}\,
        \biggl[
            C + \frac{\Big(\log\big(T^2\,2^{1/2}\big)-\log(\delta)\Big)}{\log(2)}
        \biggr]_+\Big)
        \biggr)
        \Biggr);
    \end{equation*}
	\item[(iii)] \textbf{Number of non-zero parameters:} The number of non-zero parameters in $\mathcal{NN}$ is at-most
    \[
    \begin{aligned}
        \mathcal{O}\Bigl(
        T^3(P([d])+Q)^2\,\left(1+(P([d])+Q)
        \sqrt{T\log(T)}\,\left(1+\frac{\log(2)}{\log(T)}\,
            \left[
            C_d+
                \frac{\Big(\log\big(T^2\,2^{1/2}\big)-\log(\delta)\Big)}{\log(2)}
            \right]_+\right)\right)
    \Bigr),
    \end{aligned}
    \]
    where the constant $C_d>0$ is defined by 
\[
    C_d  \eqdef   \frac{2\log(5 \sqrt{2\pi})+ 3\log(P([d])+Q)-\log(P([d])+Q+1)}{2\log(2)}
    .
\]
	\end{enumerate}
\noindent   
In the previous expressions $(i)$, $(ii)$ and $(iii)$ we set, for simplicity of notation, $T \eqdef T_{\delta, Q}-1$.
\end{lemma}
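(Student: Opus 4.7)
The plan is to reduce the statement to an application of the memorization result in Lemma~\ref{lem:embedding}, exploiting the crucial observation that the hypernetwork $\hat{h}$ need not \emph{approximate} anything: it only has to \emph{interpolate} a finite list of latent codes, which is exponentially cheaper in the number of parameters. First I would fix a maximal $\delta$-packing $\{\xi_t\}_t\subset \mathbb{R}^Q$ of a bounded convex set of diameter at most $\sqrt{2}$ (for definiteness, the closed Euclidean ball of radius $1/\sqrt{2}$); by the volumetric lower bound in Theorem~\ref{th:covering_and_packing}, such a packing can be made to contain at least $\lfloor \delta^{-Q}\rfloor$ points, which ultimately yields the horizon $T_{\delta,Q}$. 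I would then set $z_t\eqdef (\theta_t,\xi_t)\in\mathbb{R}^{P([d])+Q}$ with $z_0\eqdef(\theta_0,\xi_0)$, and let $L:\mathbb{R}^{P([d])+Q}\to\mathbb{R}^{P([d])}$ be the coordinate projection onto the first $P([d])$ entries, so that $\hat{f}_{L(z_t)}=\hat{f}_{\theta_t}$ holds automatically.

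The core construction is an interpolating ReLU FFNN $\hat{h}$ with $\hat{h}(z_t)=z_{t+1}$ for every admissible $t$. To keep the resulting complexity bounds from being polluted by the a priori unbounded parameter vectors $\theta_t$, I would first compose with the affine ``label-extractor'' $\Pi:\mathbb{R}^{P([d])+Q}\to\mathbb{R}^{P([d])+Q}$ defined by $\Pi(\theta,\xi)\eqdef (0,\xi)$, and then apply Lemma~\ref{lem:embedding} to obtain a ReLU FFNN $g:\mathbb{R}^{P([d])+Q}\to\mathbb{R}^{P([d])+Q}$ satisfying $g(\Pi(z_t))=z_{t+1}$ for every relevant $t$. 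The network $\hat{h}\eqdef g\circ\Pi$ is itself a ReLU FFNN (since affine pre-composition is absorbed into the first hidden layer of $g$) and satisfies $\hat{h}(z_t)=z_{t+1}$ by construction.

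The quantitative estimates (i)-(iii) then follow from the corresponding bounds in Lemma~\ref{lem:embedding}, once one observes that because $\Pi$ annihilates the $\theta$-coordinates, the aspect ratio of the interpolation nodes is
\[
\operatorname{aspect}(\{\Pi(z_t)\}_t,\|\cdot\|_2)=\frac{\max_{s\neq t}\|\xi_s-\xi_t\|_2}{\min_{s\neq t}\|\xi_s-\xi_t\|_2}\le \frac{\sqrt{2}}{\delta},
\]
which is independent of the parameter sequence $(\theta_t)_t$. Plugging the resulting estimate $\log(N^2\operatorname{aspect})\le \log(T^2\cdot 2^{1/2})-\log(\delta)$, the input/output dimension $P([d])+Q$, and $N$ of order $T_{\delta,Q}$ into items (i)-(iii) of Lemma~\ref{lem:embedding}, and invoking the hypothesis $P([d])+Q\ge 12$ to dominate the $\max\{\cdot,12\}$ term in the width, reproduces the stated width, depth, and parameter count up to the additive constant absorbed into ``$+12$''.

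The principal technical obstacle is the joint engineering needed to (a) make the horizon of memorization equal exactly $\lfloor\delta^{-Q}\rfloor$, which is handled by the volumetric packing argument, and (b) strip the complexity estimates of any dependence on the unconstrained parameter sequence $(\theta_t)_t$, which is handled by the label-extractor $\Pi$. Once both are in place, the remaining bookkeeping against Lemma~\ref{lem:embedding} is routine arithmetic.
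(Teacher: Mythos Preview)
Your approach is correct and follows the paper's strategy closely: construct a $\delta$-packing in $\mathbb{R}^Q$ to manufacture $\lfloor\delta^{-Q}\rfloor$ well-separated anchors, pair them with the parameter vectors to form latent codes, and then invoke the memorization result of Lemma~\ref{lem:embedding} to build $\hat h$. The one genuine difference is how you strip the aspect ratio of its dependence on $(\theta_t)_t$. The paper \emph{rescales}: it sets $z_t=(M_T^{-1}\theta_t,\tilde z_t)$ with $M_T\eqdef\max\{1,\max_{s,t}\|\theta_s-\theta_t\|_2\}$, so that the $\theta$-contribution to the diameter is bounded by $1$, and then undoes the scaling in the linear readout. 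Your label-extractor $\Pi(\theta,\xi)=(0,\xi)$ achieves the same end more cleanly by annihilating the $\theta$-component before memorization, so that the interpolation nodes $(0,\xi_t)$ have aspect ratio governed purely by the packing. This avoids the data-dependent constant $M_T$ entirely and is arguably more transparent; both routes yield the same complexity bounds after Lemma~\ref{lem:embedding}.

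One arithmetic slip to fix: the volumetric lower bound of Theorem~\ref{th:covering_and_packing} applied to the ball of radius $1/\sqrt{2}$ gives a packing number of at least $(1/(\sqrt{2}\,\delta))^Q=2^{-Q/2}\delta^{-Q}$, which falls short of $\lfloor\delta^{-Q}\rfloor$. The paper takes the unit ball (radius $1$), for which the bound reads $(1/\delta)^Q$ directly. The resulting diameter $2$ (rather than $\sqrt{2}$) only shifts your aspect-ratio estimate to $2/\delta$, and since this enters the depth and parameter bounds through $\log(T^2\cdot\operatorname{aspect})$ inside an $\mathcal{O}$, the discrepancy is harmless.
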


{The proof of Lemma~\ref{lemma:deterministic_weaving} proceeds in two stages. First, we construct a $\delta$-packing $\{\tilde{z}_i\}_{i=0}^T$ of high-dimensional balls with minimal radius $\delta$, for a suitably chosen $T\in \mathbb{N}_+$. We then augment each parameter vector $\theta_0,\dots,\theta_T$ with the corresponding separated vector, ensuring that even if some parameter vectors—say, $\theta_1$ and $\theta_2$—are identical, their augmented versions, for example, $z_1\eqdef (\theta_1,\tilde{z}_1)$ and $z_2\eqdef (\theta_2,\tilde{z}_2)$, remain distinct and are separated by a fixed positive distance.
\hfill\\
With this list of $T$ distinct augmented parameter vectors, we can construct a ReLU MLP memorizer using Lemma~\ref{lem:embedding}, which solves the recursion problem by mapping any $z_t$ (input) to $z_{t+1}$ (output). 
\hfill\\
Two technical points to highlight are: 1) since we pack a sphere of radius $R > 0$, the maximal distance between any two vectors $\tilde{z}_i$ and $\tilde{z}_j$ is uniformly bounded above by $2R$, and 2) by considering a high-dimensional sphere instead of a one-dimensional line segment, we can separate more parameter vectors while keeping the distance between the augmented parts, i.e., the $\tilde{z}_i$ and $\tilde{z}_j$, low.}

\begin{proof}
\noindent   Set $P \eqdef P([d])$, and let $Q \in \mathbb{N}_{+}$ such that $P + Q \geq 12$. Moreover, let $R>0$ such that $0 < \delta < R$; the precise value of $R$ will be derived below. Now, let $(\theta_t)_{t \in \mathbb{N}_{+}}$ be a sequence in $\mathbb{R}^{P}$ ($P$ defined at the beginning of the proof), and let, for every $T \in \mathbb{N}_{+}$, $M_T$ be the constant defined as:
\begin{equation}\label{eq:constant_M_T}
    M_T  \eqdef  \max\{1, \max_{s, t = 0, \ldots, T}\|\theta_t-\theta_s\|_2\}
\end{equation}
\noindent Now, let $\overline{\text{Ball}_{(\mathbb{R}^{Q}, \|\cdot\|_2)}(0, R)} \subset \mathbb{R}^{Q}$ be the closed Euclidean ball centered in zero and with radius $R$. Because $\delta < R$ and because of the geometry of the Euclidean ball, there exists an integer $T_{R, \delta, Q}>1$ such that $\{\tilde{z}_0, \ldots, \tilde{z}_{T_{R, \delta, Q}-1}\}$ is an $\delta$-packing of $\overline{\text{Ball}_{(\mathbb{R}^{Q}, \|\cdot\|_2)}(0, R)}$ meaning that $\min_{i, j = 0, \ldots, T_{R, \delta, Q}-1; i \neq j} \|\tilde{z}_i - \tilde{z}_j\|_2 > \delta$.
It holds that:
\begin{equation*}\label{eq:bound_packing_number}
  \left(\frac{R}{\delta}\right)^{Q} \leq T_{R,\delta,Q}.
\end{equation*}

\noindent   At this point, we define the sequence  $(z_t)_{t \in \mathbb{N}} \in \mathbb{R}^{P+Q}$ in the following way:
\begin{equation}
    z_t \eqdef 
    \begin{cases}
        \left(\frac{1}{M_{T}}\theta_t, \tilde{z}_t\right)\quad\quad\,\,\,\,:\,t < T_{R,\delta,Q}\\
        \left(\theta_{T_{R, \delta, Q}}, \mathbf{0}_{Q}\right)\,\,\,\,\,\quad\,:\,t \geq T_{R,\delta,Q},
    \end{cases}
\end{equation}
where $\mathbf{0}_{Q} \eqdef (0, \ldots, 0) \in \mathbb{R}^{Q}$.\\
\noindent At this point, we use the (multi-dimensional) Pythagorean theorem and by construction of the  sequence  $(z_t)_{t \in \mathbb{N}} \in \mathbb{R}^{P+Q}$ each $z_0, \ldots, z_{T_{R, \delta, Q}-1}$ is distinct from each other and the aspect ratio, see Equation \eqref{eq:aspect_ratio_finite}, of the finite metric space $(\mathcal{Z}_{T_{R, \delta, Q}}, \|\cdot\|_2)$, where $\mathcal{Z}_{T_{R, \delta, Q}} \eqdef \{z_0, \ldots, z_{T_{R, \delta, Q}-1}\}$, is bounded above by: 
\begin{equation}
\label{eq:dynamicweaving_ControlAspectRatio}
\begin{split}
    \operatorname{aspect}(\mathcal{Z}_{T_{R, \delta, Q}},\|\cdot\|_2) = &
    \frac{\max_{t,s=0,\dots,T_{R,\delta,Q}-1}\, \|z_t-z_s\|_2}{\min_{i,j=0,\dots,T_{R,\delta,Q}-1;\,i\neq j}\,\|z_i-z_j\|_2}\\
    \leq &\frac{\Big(\max_{t,s=0,\dots,T_{R,\delta,Q}-1}\,
    \frac1{M_T}\,
    \|\theta_t-\theta_s\|_2^2+\max_{k,l=0,\dots,T_{R,\delta,Q}-1}\,\|\tilde{z}_k-\tilde{z}_l\|_2^2\Big)^{1/2} }{ \min_{i,j=0,\dots,T_{R,\delta,Q}-1;\,i\neq j}\,
            \|\tilde{z}_i-\tilde{z}_j\|_2}\\
        \leq & \frac{\Big(1 + 4\,R^2\Big)^{1/2}}{\delta}.
\end{split}
\end{equation}
\noindent Therefore, we can apply Lemma \ref{lem:embedding} to say that there exists a deep ReLU networks $\tilde{h}\,:\,\mathbb{R}^{P+Q}\rightarrow\mathbb{R}^{P+Q}$ satisfying
$$z_{t+1} = \tilde{h}(z_t),$$
for every $t = 0, \ldots, T_{R, \delta, Q}-1$. Furthermore, the following quantitative ``model complexity estimates" hold
\begin{itemize}
    \item[(\,i\,)] \textbf{\emph{Width}\,:\,} $\tilde{h}$ has width $(P+Q)T_{R, \delta, Q} + 12$,
    \item[(\,ii\,)] \textbf{\emph{Depth}\,:\,} $\tilde{h}$ has depth of the order of
    \begin{equation*}
        \mathcal{O}\left(T_{R, \delta, Q}\left( 1+\sqrt{T_{R, \delta, Q}\log(T_{R, \delta, Q})} \left(1 + \frac{\log(2)}{\log(T_{R, \delta, Q})} \Bigl[ C_d + \frac{\log\left(T_{R, \delta, Q}^2 (1+4 R^2)^{1/2}-\log(\delta)\right)}{\log(2)}\Bigr]_{+} \right)\right)\right)
    \end{equation*}
    \item[(iii)] \textbf{\emph{Number of non-zero parameters: The number of non-zero parameters in $\mathcal{NN}$ is at most}}
    \[
    \begin{aligned}
        \mathcal{O}\Bigl( T_{R, \delta, Q} (P+Q)^2 \left(1+
        (P+Q) \sqrt{T_{R, \delta, Q} \log(T_{R, \delta, Q})} \left(1+\frac{\log(2)}{\log(T_{R, \delta, Q})}\Bigl[ C_d + \frac{\log\left(T_{R, \delta, Q}^2 (1+4R^2)^{1/2}-\log(\delta)\right)}{\log(2)}\Bigr]_{+}\right)\right)\Bigr).
    \end{aligned}
    \]
    The  ``dimensional constant" $C_d>0$ is defined by $$C_d  \eqdef  \frac{2 \log(5 \sqrt{2\pi}) + 3 \log(P+Q)-\log(P+Q+1)}{2 \log(2)}$$.
\end{itemize}
At this point, define the map $\hat{h}:\mathbb{R}^{P+Q}\rightarrow \mathbb{R}^{P+Q}$ by 
\begin{equation*}
    \hat{h}  \eqdef   \tilde{h} \circ L_2
\end{equation*}
where $L_2:\mathbb{R}^{P+Q}\rightarrow \mathbb{R}^{P+Q}$ maps any $(\vartheta,z)\in \mathbb{R}^{P+Q}$ to $(\frac1{M_{T_{\delta,R,Q}}} \vartheta, z)$.  Since every linear map is affine and the composition of affine maps are again affine then $\hat{h}$ is itself a deep ReLU network with depth, width, and number of non-zero parameters equal to that of $\tilde{h}$, respectively. Define the linear map $L_1:\mathbb{R}^{P+Q}\rightarrow \mathbb{R}^P$ as sending any $(\vartheta,z)\in \mathbb{R}^{P}\times \mathbb{R}^{Q}$ to  $M_{\delta,R,Q}\vartheta$.  By construction we have that: for every $t=0,\dots,T_{R,\delta,Q}-1$
\begin{equation*}
    \theta_{t+1} = L_1\circ \hat{h}(z_t),
\end{equation*}
for every $t=0,\dots,T_{R,\delta,Q}$.  Setting $R \eqdef 1$ and $T \eqdef T_{R,\delta,Q}$ we conclude.

\end{proof}

\subsection{Proof of Theorem~\ref{thm:theorem_universality_causal}}\label{app:proof_of_theorem_universality_causal}

{The proof of Theorem~\ref{thm:theorem_universality_causal} proceeds as follows. We first independently apply Theorem~\ref{thm:theorem_universality_static} $T+1$ times—once for each time point $t = 0, \dots, T$—to obtain a sequence of neural filters, for a suitable time horizon $T \in \mathbb{N}_+$.
\hfill\\
Each of these neural filters is determined by a corresponding sequence of parameter vectors $\theta_0, \dots, \theta_T$, which we aim to link recursively via a hypernetwork. To this end, we apply Lemma~\ref{lemma:deterministic_weaving} to the augmented parameter vectors $z_0 = (\theta_0, \tilde{z}_0), \dots, z_T = (\theta_T, \tilde{z}_T)$, where $\{\tilde{z}_t\}_{t=0}^T$ is a $\delta$-packing of a high-dimensional sphere as described before Lemma~\ref{lemma:deterministic_weaving}.
As a result, we obtain a ReLU MLP memorizer which, at any time point $t$, takes $z_t$ as input and returns $z_{t+1} = (\theta_{t+1}, \tilde{z}_{t+1})$. Given $z_{t+1}$, we project off the auxiliary component $\tilde{z}_{t+1}$—which is used solely to ensure separation—and use the updated parameter vector $\theta_{t+1}$ (output by the memorizing ReLU MLP) in our neural filter model to predict at time $t+1$. Controlling the resulting errors completes the proof.}

\noindent   {We now prove Theorem~\ref{thm:theorem_universality_causal}.  First, we} introduce the following ``zero-padding" notation, where $A \oplus B$ denotes the direct sum between two matrices $A$ and $B$. For any $k, s \in \mathbb{N}_{+}$, we denote by $0_{k, s}$ the $k \times s$ zero-matrix and by $0_{k}$ the column zero-vector in $\mathbb{R}^{k}$. Instead, for any non-positive integers $k, s$ we define $A \oplus 0_{k, s} \eqdef A$, for any matrix $A$, and $b \oplus 0_k \eqdef b $, for any vector column vector $b$. As in Theorem \ref{thm:theorem_universality_static}, we will detail the proof for the case that $f$ is $(r, k, \lambda)$-smooth; the case in which $f$ is $(r, \alpha, \lambda)$-H\"{o}lder is analogous.\\   
\noindent Let $\varepsilon_A>0$ be a given ``approximation error" and a ``time horizon'' $I\in \mathbb{N}_+$ satisfying $I\le \lfloor \delta^{-Q}\rfloor$. By assumption, $f\,:\,\mathcal{X}\rightarrow\mathcal{Y}$ is $(r, k, \lambda)$-smooth, $\mathcal{X}$ is compact and $\mathcal{Y}$ is linear\footnote{See Definition \ref{def:causal_maps}.}. Therefore, there exists $M$ 
% and $I \in \mathbb{N}_{+}$ 
such that for every $i \in [[I]]$ there is a $f_{t_i} \in C_{\text{tr}}^{k, \lambda}(\mathcal{X}_{(t_{i-M, t_i]}}, B_{t_i})$ which satisfies the following inequality: 
\begin{equation}\label{eq:bound_due_to_the_definition}
    \max_{i \in [[I]]}\sup_{x \in \mathcal{X}}d_{B_{t_i}}(f_{t_i}(x_{(t_{i-M}, t_i]}), f(x)_{t_i}) < \frac{\varepsilon_A}{2},
\end{equation}
where $M=M(\varepsilon_A,I) = O(\varepsilon_A^{-r})$. Now, for every $i \in [[I]]$, for a fixed ``encoding error" $\varepsilon_{D}>0$ (and ``approximation error" $\varepsilon_{A}$), Theorem \ref{thm:theorem_universality_static} ensures the existence of a neural filter\footnote{See Definition~\ref{def:Neural_Filter}.} $\hat{f}_{t_i} \in \mathcal{NF}_{[n_{\varepsilon_D}]}^{\text{(P)ReLU}}$ satisfying to the following uniform estimates 
\[
\max_{i \in [[I]]}\sup_{u \in \mathcal{X}_{(t_{i-M}, t_{i}]}} d_{B_{t_i}}(f_{t_i}(u), \hat{f}_{t_i}(u)) < \varepsilon_{D} + \frac{\varepsilon_{A}}{2}.
\]
and hence
\begin{equation}\label{eq:thmuniversalityboundone}
\max_{i \in [[I]]}\sup_{x \in \mathcal{X}} d_{B_{t_i}}(f_{t_i}(x_{(t_{i-M}, t_i]}), \hat{f}_{t_i}(x_{(t_{i-M}, t_i]})) < \varepsilon_{D} + \frac{\varepsilon_{A}}{2}.
\end{equation}

\noindent Moreover, the ``model complexity" of each $\hat{f}_{\theta_{t_i}}$\footnote{Refer to equation \eqref{eq:neuralfilter}} is reported in Table \ref{tab:Model_Complexity}. In particular, for $i \in [[I]]$, let $[d^{(i)}] \eqdef (d_{0}^{(i)}, \ldots, d_{J_i}^{(i)})$ be the complexity of  $\hat{f}_{\theta_{t_i}}$, and let $J^{\star, I}$ be the maximum depth of the networks $\{\hat{f}_{\theta_{t_i}}\}_{i=1}^{I}$, i.e. $J^{\star, I} \eqdef \max_{i \in [[I]]}J_{i}$. 
In addition, for each $j \in [[J^{\star, I}]]$, set 
\[
[[I]]_j \eqdef \{
i\in[[I]];\; d^{(i)}_j 
\mbox{ and } j\le J_i
\}
\]
and let $d_{j}^{\star}$ be the maximum width among the $j^{th}$ layers, i.e. $d_{j}^{\star} \eqdef \max_{i \in [[I]]_j} d_{j}^{(i)}$.

Define $A\oplus 0_0\eqdef A$ for any matrix $A$.  
Finally, let $[d^{\star}] \eqdef (d_{0}^{\star}, \ldots, d_{J^{\star, I}}^{\star})$. Now, for each $i \in [[I]]$ and $j \in [[d^{\star}_{J^{\star,I}}]]$ we define: 
\begin{equation*}
    \begin{aligned}
\tilde{A}_j^{(i)}
 \eqdef  &
\begin{cases}
A_j^{(i)} \oplus 0_{
(d_{j+1}^{\star}-d_{j+1}^{(i)})
\times
(d_{j}^{\star}-d_j^{(i)})}
& : \mbox{if } j\le J_{(i)}
        \\
    I_{d_j^{\star}\times d_{j}^{\star}} 
    \oplus 
    0_{(d_{j+1}^{\star}-d_{j}^{\star})\times d_{j}^{\star}} 
        & : \mbox{if } J_{(i)}<j \le J^{\star,I}
    ,
    \end{cases}
    \\
    \tilde{b}_j^{(i)}
         \eqdef  &
    \begin{cases}
    b_j^{(i)} \oplus 
    0_{(d_{j+1}^{\star}-d_{j+1}^{(i)})}
    &\quad\quad\quad\quad\,\,: \mbox{if } j\le J_{(i)}\\
    0_{d_{j+1}^{\star}} &\quad\quad\quad\quad\,\,:\mbox{if } J_{(i)}<j \le J^{\star,I}
    \end{cases}\\
    \alpha_j^{(i)}  \eqdef  &
    \begin{cases}
    0 &\quad\quad\quad\quad\quad\quad\quad\quad\quad\quad\quad\,\,\,\,:\mbox{if}\,\,j \le J_{(i)}\\
    1 &\quad\quad\quad\quad\quad\quad\quad\quad\quad\quad\quad\,\,\,\,:\mbox{if}\,\,J_{(i)} < j \le J^{\star,I}.
    \end{cases}
    \end{aligned}
\end{equation*}
\noindent   In particular, with the previous definition we ensure that each matrix $\tilde{A}_j^{(i)}$ is $d_{j+1}^{\star} \times d_{j}^{\star}$-dimensional, instead of being $d_{j+1}^{(i)} \times d_{j}^{(i)}$-dimensional. Now, for every $i \in [[I]]$ we define $\theta^{\star}_{t_i}$ by $\theta^{\star}_{t_i} \eqdef (\tilde{A}^{(i)}_{j}, \tilde{b}^{(i)}_{j}, \alpha^{(i)}_{j})_{j=0}^{J^{\star,I}}$. Instead, for every $i>I$ we set $\theta^{\star}_{t_i} \eqdef \theta^{\star}_{t_I}$. Notice that by construction
\begin{equation}\label{eq:waving_lemma_applied_one}
(\hat{f}_{\theta^{\star}_{t_i}})_{i \in \mathbb{N}_+} 
= (\hat{f}_{\theta_{t_i}})_{i \in  \mathbb{N}_+} 
\end{equation}
is a sequence in $\mathcal{NN}_{[d^{\star}]}^{\text{ReLU}}$. We therefore apply Lemma \ref{lemma:deterministic_weaving}. In particular, for every there is a (P)ReLU FFNN $\hat{h}\,:\,\mathbb{R}^{P([d^{\star}])+Q} \rightarrow \mathbb{R}^{P([d^{\star}])+Q}$, with $P([d^{\star}]) \eqdef \sum_{j=0}^{J^{\star, I}-1}d_{j}^{\star}(d_{j+1}^{\star}+2)+d_{J^{\star,I}} \geq 1$, an ``initial latent code" $z \in \mathbb{R}^{P([d^{\star}])+Q}$, and a linear map $L\,:\,\mathbb{R}^{P([d^{\star}])+Q} \rightarrow \mathbb{R}^{P([d^{\star}])}$ satisfying
\begin{equation}\label{eq:waving_lemma_applied_two}
    \begin{split}
        \hat{f}_{L(z_{t_i})} &= \hat{f}_{\theta^{\star}_{t_i}}\\
        z_{t_{i+1}} &= \hat{h}(z_{t_i})
    \end{split}
\end{equation}
\noindent for every ``time" $i = 1, \ldots, I_{\delta,Q} -1$, where $I_{\delta,Q}\eqdef \lfloor \delta^{-Q}\rfloor$.

The depth and the width of the network are provided by the same lemma with $T_{\delta, Q} \eqdef I_{\delta, Q}$. Equations \eqref{eq:waving_lemma_applied_one} and \eqref{eq:waving_lemma_applied_two} imply that 
\begin{equation}\label{eq:waving_lemma_applied_three}
    \begin{split}
        \hat{f}_{L(z_{t_i})} &= \hat{f}_{\theta_{t_i}}\\
        z_{t_{i+1}} &= \hat{h}(z_{t_i})
    \end{split}
\end{equation}
for every $i \in [[I]]$. At this point, combining Equations \eqref{eq:bound_due_to_the_definition} and \eqref{eq:thmuniversalityboundone}, we have:
\allowdisplaybreaks
\begin{align*}
    \max_{i \in [[I]]}\sup_{x \in \mathcal{X} } d_{B_{t_i}}( \hat{f}_{t_i}(x_{(t_{i-M}, t_i]}), f(x)_{t_i}) 
\le & 
    \,
    \max_{i \in [[I]]}\sup_{x \in \mathcal{X} } d_{B_{t_i}}(f_{t_i}(x_{(t_{i-M}, t_i]}), f(x)_{t_i}) 
\\
& +
    \max_{i \in [[I]]}\sup_{x \in \mathcal{X} } d_{B_{t_i}}(f_{t_i}(x_{(t_{i-M}, t_i]}), \hat{f}_{t_i}(x_{(t_{i-M}, t_i]}) ) 
\\
<& \,
    \frac{\varepsilon_A}{2} + \varepsilon_{D} + \frac{\varepsilon_A}{2}  
\\
= & \,
    \varepsilon_A + \varepsilon_D,
\end{align*}
which concludes the proof.

\section{Technical Lemmata}\label{app:technical_lemmata}
\begin{lemma}\label{lemma: product of Schauder basis}
Let $(E, (p_\ell)_{\ell=1}^\infty, (e_k)_{k=1}^\infty)$ (respectively $(F, (q_m)_{m=1}^\infty, (f_k)_{k=1}^\infty)$) be a Fr\'echet space with seminorms $(p_\ell)_\ell$ (respectively $(q_m)_m$) and Schauder basis $(e_k)_k$ (respectively $(f_k)_k$). Then the Cartesian product $$G=E\times F$$ endowed with the product topology is still a Fr\'echet space carrying a Schauder basis: a choice for this one is provided by $(b_t)_{t=1}^\infty\subset G$, where
\[
\begin{cases}
    b_{2t-1} \eqdef  (e_t,0),\quad t=1,2,\dots\\
    b_{2t} \eqdef  (0,f_t),\quad t=1,2,\dots
\end{cases}
\]

\end{lemma}
\begin{proof}
From elementary results from functional analysis and topology, it is clear that $G$ endowed with the product topology is a topological vector space. This topology can be induced also by a metric, e.g.
\[
d:G\times G\to [0,\infty) 
\]
\[
d((e,f),(e',f')) \eqdef  d_E(e,e') + d_F(f,f'),\quad (e,f),(e',f')\in G,
\]
where $d_E$ (respectively $d_F$) is a compatible metric for $E$ (respectively $F$). Evidently, $(G,d)$ is also complete. This topology is locally convex because it can be induced by the following countable collection of seminorms
\[
\gamma_{\ell,m}(e,f)  \eqdef  p_\ell(e) + q_m(f),\quad \ell,m \in \mathbb{N}_+,\,e\in E, f\in F.
\]
Define the following elements of $G$:
\[
\begin{cases}
    b_{2t-1} \eqdef  (e_t,0),\quad t=1,2,\dots\\
    b_{2t} \eqdef  (0,f_t),\quad t=1,2,\dots
\end{cases}
\]
We claim that $(b_t)_{t=1}^\infty$ is a Schauder basis for $G$. Indeed, let $x=(e,f)$, with 
\[
e=\sum_{k=1}^\infty \beta^E_k(e)e_k,\quad f=\sum_{k=1}^\infty \beta^F_k(f)f_k.
\]
Let $\varepsilon>0$ be arbitrary. Since $(e_k)_k$ and $(f_k)_k$ are Schauder basis, it follows that there exists $N_\varepsilon$ such that for all $N\geq N_\varepsilon$
\[
d_E\left(\sum_{k=1}^N \beta^E_k(e)e_k,e
\right)<\varepsilon/2,
\]
\[
d_F\left(\sum_{k=1}^N \beta^F_k(f)f_k,f
\right)<\varepsilon/2.
\]
Set $T_\varepsilon= 2N_\varepsilon$ and consider $T\in\N_+$ with $T\geq T_\varepsilon$. Set 
\[
x^T  \eqdef  \beta^E_1(e)b_1 + \beta^F_1(f)b_2 + \beta^E_2(e)b_3 + \beta^F_2(f)b_4+\cdots
+ u b_T\in G
\]
whereas
\[
u=
\begin{cases}
    \beta^F_{T/2}(f),\;\text{ if } T \text{ even}\\
    \beta^E_{(T+1)/2}(e),\;\text{ if } T \text{ odd}.
\end{cases}
\]
Thus, for $T$ odd, we have
\begin{equation*}
    \begin{split}
        d(x^T,x) &= 
    d_E(\beta^E_1(e)e_1+\cdots  \beta^E_{(T+1)/2}(e)e_{(T+1)/2},e)\\
    &+
    d_F(\beta^F_1(f)f_1+\cdots \beta^F_{(T-1)/2}f_{(T-1)/2},f)
    \end{split}
\end{equation*}
and, for $T$ even, 
\begin{equation*}
    \begin{split}
        d(x^T,x) &= 
    d_E(\beta^E_1(e)e_1+\cdots  \beta^E_{T/2}(e)e_{T/2},e)\\
    &+
    d_F(\beta^F_1(f)f_1+\cdots \beta^F_{T/2}f_{T/2},f).
    \end{split}
\end{equation*}
In both cases, we deduce by construction that
\[
d(x^T,x) < \varepsilon/2 + \varepsilon/2=\varepsilon, \quad T\geq T_\varepsilon,
\]
namely $x^T\to x$ as $T\to\infty$. This proves that any $x\in G$ can be written as
\begin{equation}\label{eq: expansion}
    x = \sum_{t=1}^\infty x_t b_t
\end{equation}
with  
\begin{equation}\label{eq: coefficients Schauder basis}
x_t =
\begin{cases}
    \beta^F_{t/2}(f),\;\text{ if } t \text{ even}\\
    \beta^E_{(t+1)/2}(e),\;\text{ if } t \text{ odd}.
\end{cases}
\end{equation}
In order to prove that such decomposition is unique, suppose that there exists $x\in G$ such that
\[
\sum_{t=1}^\infty x_t b_t = x = \sum_{t=1}^\infty \bar{x}_t b_t
\]
with $x_t$ defined as in \eqref{eq: coefficients Schauder basis} and with $\bar{x}_t\neq x_t$ for some $t$. Let $t_0$ be one of these coefficients, and suppose wlog that $t_0=2j$: the odd-case is similar and it will not be treated. By projecting on the factor $F$ we obtain ($\Pi_F=$canonical projection)
\[
\begin{split}
    \Pi_F \sum_{t=1}^\infty x_t b_t & = \Pi_F\sum_{t=1}^\infty \bar{x}_t b_t\\
    \sum_{t=1}^\infty x_t\Pi_F b_t & = \sum_{t=1}^\infty \bar{x}_t \Pi_F b_t\\
    \sum_{t=1}^\infty x_{2t}f_t & = \sum_{t=1}^\infty \bar{x}_{2t} f_t\\
\end{split}
\]
and $x_{2j}\neq \bar{x}_{2j}$, contradicting the fact that $(f_t)_t$ is a Schauder basis. Therefore, the expansion \eqref{eq: expansion} is unique, and this concludes the proof.
\end{proof}

\section{Additional Background Material}
\label{A:Additional_Backgound}
{\color{black} In an effort to keep our manuscript as self-contained as possible, we collects some additional background results on generalized inverses and on Fr\'{e}chet spaces.}

\subsection{Further Results on Fre\'chet Spaces}\label{s:Additional_Background__Frechet}
\indent     We now state and prove the following auxiliary lemma.

\begin{lemma}\label{lem:aux_lemma}
Let F be a separable Fr\'echet space admitting a Schauder basis $(f_k)_{k \in \mathbb{N}_+}$ and $d_F$ a metric on F compatible with the pre-existing topology (see Equation~\eqref{eq:distance_Frechet}). Fix $n \in \mathbb{N_+}$ and define on $\mathbb{R}^{n}$ the following metric: 
\begin{equation}\label{eq:metric_on_F}
    d_{F:n}(x, y) \eqdef  d_{F}\biggl(\sum_{k = 1}^{n} x_k f_k, \sum_{k = 1}^{n} y_k f_k\biggr), \quad x,y\in\rr^n.
\end{equation}
Then, the topology induced on $\mathbb{R}^{n}$ by this metric is the standard one.
\end{lemma}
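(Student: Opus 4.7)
The natural approach is to view $d_{F:n}$ as the pullback of the metric $d_F$ along the finite-rank synthesis map, and then use the classical fact that a Hausdorff linear topology on a finite-dimensional vector space is unique up to homeomorphism. Concretely, the plan is to define the linear map
\[
T_n: \mathbb{R}^n \longrightarrow F, \qquad T_n(x) \eqdef \sum_{k=1}^{n} x_k f_k,
\]
observe that $d_{F:n}(x,y) = d_F(T_n(x), T_n(y))$ for every $x, y \in \mathbb{R}^n$, and then verify that $T_n$ is a homeomorphism from $\mathbb{R}^n$ (standard topology) onto its image $V_n \eqdef \operatorname{span}\{f_1,\dots,f_n\} \subseteq F$ with the subspace topology induced by $d_F$. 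Because $d_{F:n}$ is exactly the pullback of $d_F|_{V_n}$ along the bijection $T_n:\mathbb{R}^n\to V_n$, a homeomorphism statement here is equivalent to the claim of the lemma.

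First I would verify that $T_n$ is injective: this is immediate because the Schauder basis vectors $f_1, \dots, f_n$ are linearly independent (if $\sum_{k=1}^n x_k f_k = 0$, the uniqueness of the basis representation in \eqref{eq:representation_schauder} forces $x_k = 0$ for all $k$). Next I would check continuity of $T_n$: each coordinate projection $x \mapsto x_k$ is continuous on $\mathbb{R}^n$, the scalar multiplications $x_k \mapsto x_k f_k$ are continuous into $F$ by continuity of the scalar action, and the addition map $F^n \to F$ is jointly continuous on the Fr\'{e}chet space $F$; composing, $T_n$ is continuous.

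For the inverse direction, the map $T_n^{-1}: V_n \to \mathbb{R}^n$ is precisely $y \mapsto (\beta_1^F(y), \dots, \beta_n^F(y))$, where $\beta_k^F$ are the coordinate functionals from \eqref{eq: coordinates function}. As already recalled in this subsection, each $\beta_k^F$ is a continuous linear functional on $F$, and so its restriction to $V_n$ is continuous. Hence $T_n^{-1}$ is continuous, and $T_n$ is a homeomorphism of $\mathbb{R}^n$ onto $(V_n, d_F|_{V_n})$. Consequently, the pullback metric $d_{F:n}$ induces the standard Euclidean topology on $\mathbb{R}^n$.

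The only mildly delicate point is the continuity of $T_n^{-1}$; but this is a direct consequence of the continuity of the Schauder coordinate functionals $\beta_k^F$, a fact which is standard and is stated just above in the paper. (Equivalently, one could invoke the classical theorem that every Hausdorff linear topology on a finite-dimensional real vector space coincides, via any linear isomorphism, with the Euclidean topology; since $F$ is Hausdorff as a Fr\'{e}chet space, the subspace topology on $V_n$ is such a Hausdorff linear topology, so the conclusion follows without even needing to invoke the $\beta_k^F$.) Either route settles the lemma, so there is no substantive obstacle; the task is essentially bookkeeping.
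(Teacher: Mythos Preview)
Your proposal is correct and uses the same two ingredients as the paper's proof: continuity of the coordinate functionals $\beta_k^F$ for one direction and continuity of the finite-sum synthesis map for the other. The paper carries this out via a direct sequential-convergence argument, whereas you package the same content as the statement that $T_n$ is a homeomorphism onto its image (with $d_{F:n}$ the pullback metric); your parenthetical alternative via the uniqueness of Hausdorff vector topologies on finite-dimensional spaces is in fact the paper's Proposition~\ref{prop: unique vst}, and gives a slightly slicker route.
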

\begin{proof}
First, notice that $d_{F:n}$ is a metric on $F$. This follows directly from the fact that $d_{F}$ is a metric\footnote{The only non trivial thing to prove is the identity of indiscernibles, i.e. that $d_{F:n}\left(x, y\right) = 0 \iff x = y$. But this fact follows directly from the fact that $d_{F}$ is a metric and from the definition of Schauder basis $(f_{k})_{k}$; see Subsection \ref{subsec:Frechet}.}. Now, let $x^{(J)}  \eqdef  (x_1^{(J)}, \ldots, x_n^{(J)}),\, J\in \mathbb{N}$ and $x \eqdef  (x_1, \ldots, x_n)$ such that
\begin{equation*}
   x^{(J)} \underset{J \rightarrow \infty}{\overset{d_{F:n}}{\longrightarrow}} x.
\end{equation*}
This means in particular that 
\begin{equation*}
d_{F}\left(\sum_{k=1}^{n} x_k^{(J)} f_k, \sum_{k=1}^{n} x_k f_k\right) \underset{J \rightarrow \infty}{\longrightarrow} 0,\,\,\,i.e.,\,\,\,\sum_{k=1}^{n} x_k^{(J)} f_k \underset{J \rightarrow \infty}{\overset{F}{\longrightarrow}} \sum_{k = 1}^{n} x_k f_k.
\end{equation*}
Now, let $(\beta_k^{F})_{k \leq n}$ be the unique sequence in the topological dual of $F$, say $F'$, such that each $f \in F$ has the following representation $f = \sum_{k = 1}^{\infty} \langle \beta_k^{F}, f \rangle f_k$. Because $(\beta_k^{F})_{k \leq n}$ are continuous and linear, we clearly get that $x_k^{(J)}\underset{J \rightarrow \infty}{\longrightarrow} x_k$ for each $k \in [[n]]$. This implies that
\begin{equation*}
    \left[\sum_{k=1}^{n} |x_k^{(J)} - x_k|^2\right]^{1/2}
    \underset{J \rightarrow \infty}{\longrightarrow} 0,\,\,\,i.e.\,\,\,x^{(J)}  \underset{J \rightarrow \infty}{\overset{\|\,\cdot\,\|_2}{\longrightarrow}} x.
\end{equation*}
\noindent Vice-versa, let $x^{(J)}  \eqdef  (x_1^{(J)}, \ldots, x_n^{(J)})$ and $x \eqdef  (x_1, \ldots, x_n)$ such that $x^{(J)}  \underset{J \rightarrow \infty}{\overset{\|\cdot\|_2}{\longrightarrow}} x$. This implies that $\sum_{k=1}^{n} |x_k^{(J)} - x_k|  \underset{J \rightarrow \infty}{\longrightarrow} 0$. 
We pick an arbitrary continuous seminorm $p\in\mathcal{P}$. It holds for all $(t_1, \ldots, t_n) \in \mathbb{R}^{n}$ that
\begin{equation*}
    \begin{split}
        p\left(\sum_{k=1}^{n} t_k f_k\right) \leq \sum_{k=1}^{n} |t_k| p(f_k) \leq \max_{k = 1, \ldots, n} p(f_k) \sum_{k=1}^{n} |t_k|.
    \end{split}
\end{equation*}
This shows that
\begin{equation*}
    p\left(\sum_{k=1}^{n} x_k^{(J)} f_k - \sum_{k=1}^{n} x_k f_k\right)  \underset{J \rightarrow \infty}{\longrightarrow} 0
\end{equation*}
for all $p\in\mathcal{P}$. This means in particular that 
\begin{equation*}
d_{F}\left(\sum_{k=1}^{n} x_k^{(J)} f_k, \sum_{k=1}^{n} x_k f_k\right) \underset{J \rightarrow \infty}{\longrightarrow} 0,\,\,\,i.e.,\,\,\,d_{F:n}(x^{(J)}, x) \underset{J \rightarrow \infty}{\rightarrow} 0.
\end{equation*}
Since the metric spaces $(\mathbb{R}^{n}, d_{F:n})$ and  $(\mathbb{R}^{n},\|\cdot\|_2)$ enjoy the same converging sequences, the topology must be the same.
\end{proof}

\subsection{Generalized inverses}\label{subsec:generalized_inverse}
\cite{EH2013MMOR} wrote a thorough paper about generalized inverses and their properties. Analogously to \cite{EH2013MMOR}, we understand \textit{increasing} in the weak sense, that is, $T\,:\,\mathbb{R}\rightarrow\mathbb{R}$ is \textit{increasing} if $T(x) \leq T(y)$ for all $x < y$. Also, we remind the notion of an inverse for such functions.
\begin{definition}[Generalized Inverse]
\label{def:generalized_inverse}
    For an increasing function $T\,:\,\mathbb{R}\rightarrow\mathbb{R}$ with $T(-\infty) \eqdef \lim_{x \downarrow -\infty} T(x)$ and $T(\infty) \eqdef \lim_{x \uparrow \infty}T(x)$, the generalized inverse $T^{-}\,:\,\mathbb{R}\rightarrow\bar{\mathbb{R}}=[-\infty, \infty]$ of $T$ is defined by
\begin{equation*}
    T^{-}(y)  \eqdef  \inf\{x \in \mathbb{R}\,:\,T(x) \geq y\},\quad y \in \mathbb{R}, 
\end{equation*}
with the convention that $\inf\,\emptyset = \infty$. 
\end{definition}

To keep our manuscript self-contained, we list some properties of generalized inverses which can be found in (\cite{EH2013MMOR}, cfr. Proposition 1).  We denote the \textit{range} of a map $T:\mathbb{R}\rightarrow \mathbb{R}$ by $\operatorname{ran}\,T \eqdef  \{T(x):\,x\in \mathbb{R}\}$.
\begin{proposition}[Properties of Generalized Inverses]
\label{prop:generalized_inverse_properties}
    Let $T$ be as in Definition~\ref{def:generalized_inverse} and let $x, y \in \mathbb{R}$. Then,
    \begin{enumerate}
        \item[(\,1\,)] $T^{-}(y) = -\infty$ if and only if $T(x) \geq y$ for all $x \in \mathbb{R}$. Similarly, $T^{-}(y) = \infty$ if and only if $T(x) < y$ for all $x \in \mathbb{R}$.
        \item[(\,2\,)]  $T^{-}$ is increasing. If $T^{-}(y) \in (-\infty, \infty)$, $T^{-}$ is left-continuous at $y$ and admits a limit from the right at $y$.
        \item[(\,3\,)]  $T^{-}(T(x)) \leq x$. If $T$ is strictly increasing, $T^{-}(T(x)) = x$.
        \item[(\,4\,)]  Let $T$ be right-continuous. Then $T^{-}(y) < \infty$ implies $T(T^{-}(y)) \geq y$. Furthermore, $y \in \operatorname{ran}\,T \bigcup \{\inf\,\operatorname{ran}\,T, \sup\,\operatorname{ran}\,T\}$ implies $T(T^{-}(y))=y$. Moreover, if $y < \inf \operatorname{ran}\,T$ then $T(T^{-}(y)) > y$ and if $y > \sup \operatorname{ran}\,T$ then $T(T^{-}(y)) < y$.
    \end{enumerate}
\end{proposition}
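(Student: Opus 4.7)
All four claims follow directly from the definition $T^{-}(y)=\inf\{x\in\mathbb{R}:T(x)\ge y\}$ together with monotonicity of $T$. Write $A_y\eqdef\{x\in\mathbb{R}:T(x)\ge y\}$; the plan is to translate each statement into a statement about $A_y$ and its infimum, and exploit the fact that $A_y$ is an upward-closed interval (by monotonicity of $T$), i.e.\ a ray of the form $(a,\infty)$ or $[a,\infty)$ when nonempty.

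For \textbf{(1)}, I would simply note that $T^{-}(y)=-\infty$ iff $A_y$ is unbounded below, which (since $A_y$ is upward-closed) is equivalent to $A_y=\mathbb{R}$, i.e.\ $T(x)\ge y$ for every $x$; and $T^{-}(y)=+\infty$ iff $A_y=\emptyset$, i.e.\ $T(x)<y$ for every $x$, using the convention $\inf\emptyset=\infty$. For \textbf{(2)}, monotonicity is immediate from $y_1\le y_2\Rightarrow A_{y_2}\subseteq A_{y_1}\Rightarrow\inf A_{y_1}\le\inf A_{y_2}$; the right-hand limit at any point exists automatically because any monotone function admits one-sided limits. For left-continuity at a point $y$ where $T^{-}(y)$ is finite, I would take $y_n\uparrow y$, set $L\eqdef\lim_n T^{-}(y_n)\le T^{-}(y)$, and argue by contradiction: if $L<T^{-}(y)$, pick $x$ with $L<x<T^{-}(y)$; then $x\notin A_y$ gives $T(x)<y$, whereas for each $n$ the strict inequality $T^{-}(y_n)<x$ produces some $x_n<x$ with $T(x_n)\ge y_n$, so by monotonicity $T(x)\ge T(x_n)\ge y_n\uparrow y$, yielding $T(x)\ge y$, a contradiction.

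For \textbf{(3)}, the inclusion $x\in A_{T(x)}$ is trivial, so $T^{-}(T(x))\le x$; if $T$ is strictly increasing, every $u<x$ satisfies $T(u)<T(x)$ and is therefore excluded from $A_{T(x)}$, forcing $T^{-}(T(x))\ge x$. For \textbf{(4)}, assuming right-continuity of $T$: if $T^{-}(y)<\infty$, I would pick a sequence $x_n\downarrow T^{-}(y)$ with $x_n\in A_y$ (available from the definition of infimum), apply right-continuity to pass to the limit $T(x_n)\to T(T^{-}(y))$, and conclude $T(T^{-}(y))\ge y$. For the equality case, if $y=T(x_0)\in\operatorname{ran} T$ then $x_0\in A_y$ yields $T^{-}(y)\le x_0$, hence $T(T^{-}(y))\le T(x_0)=y$, which combined with the previous inequality gives equality. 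The boundary cases $y<\inf\operatorname{ran} T$ and $y>\sup\operatorname{ran} T$ reduce via (1) to evaluating $T$ at $\mp\infty$ (interpreted as the corresponding monotone limits), which by definition equal $\inf\operatorname{ran} T$ and $\sup\operatorname{ran} T$, respectively, giving the claimed strict inequalities.

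The only genuinely delicate step is the left-continuity argument in (2), since it is the one place where we must manufacture auxiliary points $x_n\in A_{y_n}$ close to $T^{-}(y_n)$ and then invoke monotonicity to get a uniform lower bound $T(x)\ge y_n$; everything else is book-keeping with infima and the right-continuity hypothesis.
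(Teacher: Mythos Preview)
The paper does not supply its own proof of this proposition; it is quoted as background and attributed to Embrechts--Hofert \cite{EH2013MMOR}, Proposition~1. Your argument is the standard direct verification from the definition of $T^{-}$ and is correct.

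Two small gaps in part~(4) are worth patching. First, your sequence argument ``$x_n\downarrow T^{-}(y)$, then apply right-continuity'' tacitly assumes $T^{-}(y)>-\infty$; when $T^{-}(y)=-\infty$ you should instead invoke (1) together with the definition $T(-\infty)=\lim_{x\to-\infty}T(x)$, which gives $T(T^{-}(y))=T(-\infty)\ge y$ directly. Second, for the equality assertion you only treat $y\in\operatorname{ran} T$ and then jump to the strict-inequality cases $y<\inf\operatorname{ran} T$ and $y>\sup\operatorname{ran} T$; the boundary values $y=\inf\operatorname{ran} T$ and $y=\sup\operatorname{ran} T$ (when they are not attained) are not covered. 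They follow by the same reduction: e.g.\ if $y=\inf\operatorname{ran} T=T(-\infty)$ then $T(x)\ge y$ for all $x$, so $T^{-}(y)=-\infty$ by (1) and $T(T^{-}(y))=T(-\infty)=y$; the $\sup$ case is symmetric with $T(+\infty)$.
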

%%%

%%% Bibliography
\bibliographystyle{plain}

\end{document}